\newtheorem{theorem}{Theorem}[section]
\newtheorem{lemma}[theorem]{Lemma}
\newtheorem{corollary}[theorem]{Corollary}
\newtheorem{assumption}[theorem]{Assumption}
\newtheorem{proposition}[theorem]{Proposition}
\theoremstyle{remark}
\newtheorem{definition}[theorem]{Definition}
\newtheorem*{example}{Example}
\newtheorem*{remark}{Remark}
\newcommand{\ul}{\underline}
\newcommand{\E}{\mathbb{E}}
\newcommand{\indep}{\rotatebox[origin=c]{90}{$\models$}}
\begin{document}

\begin{frontmatter}
\title{Central limit theorem for linear spectral statistics of block-Wigner-type matrices}
\runtitle{CLT for LSS of block-Wigner-type matrices}

\begin{aug}
\author[A]{\fnms{Zhenggang} \snm{Wang}\ead[label=e1]{u3005312@connect.hku.hk}},
\and \author[B]{\fnms{Jianfeng} \snm{Yao}\ead[label=e2]{jeffyao@hku.hk}}
\address[A]{Department of Statistics and Actuarial Science,
The University  of Hong Kong,
\printead{e1}}

\address[B]{Department of Statistics and Actuarial Science,
	The University  of Hong Kong,
\printead{e2}}
\end{aug}

\begin{abstract}
Motivated by the stochastic block model, we investigate a class of Wigner-type matrices with certain block structures, and establish a CLT for the corresponding linear spectral statistics via the large-deviation bounds from local law and the cumulant expansion formula. We apply the results to the  stochastic block model. Specifically,  a class of renormalized adjacency matrices will be  block-Wigner-type matrices.  Further, we show that for certain estimator of such renormalized adjacency matrices, which will be no longer Wigner-type but share long-range non-decaying weak correlations among the entries,  the linear spectral statistics of such estimators will still share the same limiting behavior as those of the block-Wigner-type matrices, thus enabling hypothesis testing about stochastic block model. 
\end{abstract}

\begin{keyword}[class=MSC2020]
\kwd[Primary ]{60B20}
\kwd{60F05}
\kwd[; secondary ]{15B52}
\end{keyword}

\begin{keyword}
\kwd{Wigner-type matrices}
\kwd{stochastic block model}
\kwd{linear spectral statistics}
\end{keyword}

\end{frontmatter}
\tableofcontents

%

\section{Introduction}

The investigation into the limiting properties of large random matrices has been popular for over two decades. Many techniques \cite{Bai1999}\cite{Bai2009}\cite{Benaych-Georges2016}\cite{Erdoes2011} are developed to solve problems in this area. There are plenty of objects of interest, namely the empirical spectral distribution (ESD), the limiting spectral distribution(LSD), the largest eigenvalue, the linear spectral statistics (LSS), the eigenvector statistics, etc. Particularly, 
the linear spectral statistics have attracted lots of attention ever since the 90s \cite{Sinai}. Various methods are explored to study the behavior of the LSS,  such as moment method \cite{Anderson2006}, martingale difference method \cite{bai2005}\cite{Bai2004},  cumulant expansion method \cite{Khorunzhy1996}. Also there is progress from the stochastic calculus \cite{Costin1995}\cite{Guionnet2002a}  and free probability \cite{Mingo2006}. Further, \cite{Chatterjee2006}\cite{Chatterjee2009} generalize the Stein method and use second order Poincar\'e inequalities to prove a CLT for the LSS. Specifically, in recent years,  a more in-depth understanding of the behavior of the LSS of  Wigner and Wigner-type matrices has been achieved by researchers from various perspectives.  \cite{Lytova2009a} introduces an interpolation method for more general Wigner matrices than the ones that share the same cumulants with GOE/GUE. \cite{Male2013} extends the CLT to certain heavy-tailed random matrices. More recently, \cite{He2017a} studies the mesoscopic eigenvalue statistics of the Wigner matrices via the Green function and the local law, \cite{Cipolloni2020} yields a thorough analysis of fluctuations of regular functions of Wigner matrices and \cite{bao2021quantitative} establishes a near-optimal convergence rate for the CLT of LSS of Wigner matrices.

\par
In the meantime, motivations are drawn from social networks and other associated random graph models, which brings the researchers' attention to more involved matrix models.  One of the most classic models in this field is the stochastic block model (SBM).  In contrast to the Erd\H{o}s-Renyi model in which all nodes are exchangeable, the SBM introduces inhomogeneity by dividing the nodes into different communities. In the SBM with nodes $V$ and edges $E$, all edges are undirected, and different edges are independent, in the meantime, the probability that two nodes $v_i,v_j\in V$ connect with each other is only determined by which communities $v_i$ and $v_j$ belong to. In other words, the adjacency matrix of the SBM can be viewed as a random 0-1 matrix whose entries have block-wise constant expectations.  Thus, the centered adjacency matrices of the SBMs are  Wigner-type matrices with inhomogeneous variance profiles. 
\par
One of the most important questions in the SBM is community detection, which is to recover the community structure underneath via one single observation of the adjacency matrix. Further, an induced problem is to determine the number of communities. For most community detection algorithms, the number of communities needs to be given a priori as a hyperparameter. This motivates hypothesis testing for this parameter via the distributional information of certain test statistics of the model.  \cite{Lei2016} proposes a sequential test for  the renormalized adjacency matrix  $\left(\frac{A_{ij}-{p}_{ij}}{\sqrt{np_{ij}(1-p_{ij})}}\right)_{ij}$ and $\left(\frac{A_{ij}-\hat{p}_{ij}}{\sqrt{n\hat{p}_{ij}(1-\hat{p}_{ij})}}\right)_{ij}$ based on the Tracy-Widom fluctuation of the largest eigenvalue. In the same spirit, more recently in \cite{Banerjee2017}, Banerjee and Ma propose a hypothesis testing for the community structure via the LSS of the renormalized adjacency matrix $\left(\frac{A_{ij}-\hat{p}_{ij}}{\sqrt{n\hat{p}_{ij}(1-\hat{p}_{ij})}}\right)_{ij}$ with the method of moments approach \cite{Anderson2006} in the cases where the SBM has only one community or two asymptotically equal-sized communities. Towards another end, \cite{Adhikari2019a} proves a CLT for the LSS of general Wigner-type matrices via the second order Poincar\'e inequality without providing the explicit formulas for the asymptotic  mean and covariance function. 

In this paper, We establish CLTs for the class of block-Wigner-type matrices which is motivated by the renormalization $\left(\frac{A_{ij}-p_{ij}}{\sqrt{n}}\right)$ as well as a correlated matrix model induced from the renormalization $\left(\frac{A_{ij}-\hat{p}_{ij}}{\sqrt{n}}\right)$. We derive the explicit formulas for the asymptotic mean functions and covariance functions with the help of  precise large deviation estimates of the Green function by \cite{Ajanki2017a} and the application of cumulant expansion formula \cite{Khorunzhy1996}\cite{Erds2019MDE}.





\paragraph*{Our contributions.}
We strengthen the existing results in the following ways:
\begin{enumerate}
	\item  Our block-Wigner-type matrices may have not only  inhomogeneous fourth moments but also inhomogeneous second moments. This greatly extends the potential range of application of the theorem. We show that the approximately low-rank structure of the entries would reproduce itself in terms of repetitive patterns in the system of equations for key moments of $Tr(G(z))$
	and other related higher-order structures.
	\item Further, we establish a CLT for the LSS of the data-driven variation of the above matrices, which is no longer Wigner-type and shares long-range weak correlations among the entries. This yields  a direct application in the SBM.
\end{enumerate}

\subsection*{Organization}

We first introduce a few prerequisites about our main tools and ingredients
in Section \ref{sec:preliminary}. Then we introduce the block-Wigner-type matrices and establish the CLT for LSS of such matrices in Section \ref{sec:blockWignerclt}.  In Section \ref{sec:applicationtosbm}, we consider the application to the SBM and establish a new CLT for LSS of a data-driven variation of the block-Wigner-type matrices. In Section \ref{sec:outlineofpfblockWigner}, the outlines of the proofs of the main results are shown. In Section \ref{sec:simulation}, we apply the above 2 CLTs to the synthetic data of the SBM to show the efficiency of the theorems. Details of proofs are shown in Section \ref{sec:detailedcalculation} and \ref{sec:pfofhatversion}.

\section{Preliminary} \label{sec:preliminary}

\subsection{Notation}
For simplicity of presentation, we will use 
$\underline{M}$ for normalized trace $\frac{1}{N} Tr(M)$ of a  $n\times n$ square matrix $M$,   
$\langle R\rangle$ to denote centered version $R-\mathbb{E} R$ of a random variable $R$,
and $[K]=\{1,2,\cdots,K\}$ to represent the set of positive integers from 1 to $K$. Further, we introduce  two operations related to diagonal terms: for a column vector $\mathbf{v}=[v_1,\cdots,v_n]^{\top}$,  
$Diag[\mathbf{v}]$ denotes the diagonal matrix  whose diagonal elements are the  entries of  $\mathbf{v}$, and for a $n\times n$ matrix $M$, $diag(M)$  denotes the column vector whose entries are the diagonal element of $M$. In particular, $diag\o\circ Diag=Id$.

\subsection{Large deviation bounds from local law for  Wigner-type matrices}

This section gives a quick review of the large deviation bounds from the local law for general Wigner-type matrices by Ajanki et al. \cite{Ajanki}\cite{Ajanki2017a}, which will serve as one of the main ingredients for proving our central limit theorem.

The main object of interest is the resolvent $G(z)=(H-z)^{-1}$, where $H$ is the so-called Wigner-type matrix such that  $H$ is real symmetric and
the entries $H_{i j}$ are independent for $i \leq j$ and centered $\E H_{i j}=0, \ \forall i,j\in[n]$.

Let  $S=(s_{ij})_{i,j=1}^n=(\E |H_{ij}|^2)_{i,j=1}^n$, then  the system of the quadratic vector equations (QVE) is
\begin{equation}\label{eq:qve0}
	-\frac{1}{m_{i}(z)}=z+\sum_{j=1}^{n} s_{i j} m_{j}(z), \quad \text { for all } \quad i\in[n],\quad z \in \mathbb{C}_+.
\end{equation}

There exists a unique solution $\mathbf{m}=(m_1,\ldots,m_n): \mathbb{C}_+\rightarrow \mathbb{C}_+^n$ of the above system on the complex upper half-plane. We refer the readers to \cite{Ajanki} for properties of the QVE system. It is proved by Ajanki et al. \cite{Ajanki2017a} that under certain regularity conditions, the solution $\mathbf{m}=(m_1,\ldots,m_n)$ of the above system of equations may serve as a good approximation for the diagonal terms $(G_{11},\ldots,G_{nn})$ of the resolvent.

\begin{definition}
	[Stochastic domination] Suppose $N_{0}:(0, \infty)^{2} \rightarrow \mathbb{N}$ is a given function, depending only on certain model parameters. For two sequences, $ \varphi=\left(\varphi^{(N)}\right)_{N}$  and  $\psi=\left({\psi}^{(N)}\right)_{N},$ of non-negative random variables we say that $ \varphi$ is stochastically dominated by $\psi$  if for all $ \varepsilon>0 $ and 
	$D>0 , $
	$$
	\mathbb{P}\left(\varphi^{(N)}>N^{\varepsilon} \psi^{(N)}\right) \leq N^{-D}, \quad N \geq N_{0}(\varepsilon, D).
	$$
	In this case we write $\varphi \prec \psi $ or $\varphi =O_{\prec} (\psi)$.  
\end{definition}

\begin{lemma} [Theorem 1.7 of \cite{Ajanki2017a}, reformulated to a macroscopic version]\label{lem:QVE}
	
	Let $H$ be a Wigner-type matrix and $\mathbf{m}$ be defined in \eqref{eq:qve0}.	Suppose that the following assumptions hold: 
	\begin{enumerate}
		\item [A] For all $n$ the matrix $\mathrm{S}$ is flat, i.e.,
		$$
		s_{i j} \leq \frac{C}{n}, \quad C>0,\quad i, j\in [n];
		$$
		\item [B] For all $n$ the matrix $S$ is uniformly primitive, i.e.;
		$$
		\left(S^{L}\right)_{i j} \geq \frac{p}{n},\ p>0, \quad i, j\in [n] ,
		$$
		\item [C] For all $n$ the entries $H_{i j}$ of the random matrix $H$ have bounded moments
		$$
		\mathbb{E}\left|H_{i j}\right|^{k} \leq \mu_{k} s_{i j}^{k / 2}, \quad k \in \mathbb{N}, i, j\in[n].
		$$
	\end{enumerate}
	are satisfied. Then uniformly for all $z=\tau+i\eta\in\mathbb{C}_+$ with constant order imaginary part $\eta$ or real  part $\tau$ that is bounded away from the edge of the spectrum of $H$, the resolvents of the random matrices $H=H^{(n)}$ satisfy
	\begin{equation}
		\max_{i,j}|G_{ij}(z)-m_i(z)\delta_{ij}|=O_{\prec}( \frac{1}{\sqrt{n}}).
	\end{equation}
	Furthermore, for any sequence of deterministic vectors $\boldsymbol{w}=\boldsymbol{w}^{(n)}=[w_1,\cdots,w_n]\in\mathbb{C}^n$ with $\max_i|w_i|\le 1$, the averaged resolvent diagonal has an improved convergence rate.
	\begin{equation}\label{eq:isotropic}
		|\frac{1}{n}\sum_{i=1}^{n}\bar{w}_i(G_{ii}(z)-m_i(z))|=O_{\prec}( \frac{1}{n}) .	\end{equation}
	
\end{lemma}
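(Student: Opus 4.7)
The plan is to deduce both bounds directly from the microscopic local law of Ajanki, Erd\H{o}s, and Kr\"uger (Theorem 1.7 of \cite{Ajanki2017a}), of which the present lemma is just a macroscopic specialization. Assumptions (A)--(C) coincide with their flatness, uniform primitivity, and bounded moment hypotheses, so their full entrywise local law is available: for every $z=\tau+i\eta$ in the admissible domain,
\begin{equation}\label{eq:proposal-microlocal}
\max_{i,j}\, |G_{ij}(z) - m_i(z)\delta_{ij}| \;\prec\; \sqrt{\frac{\rho(z)}{n\eta}} + \frac{1}{n\eta},
\end{equation}
where $\rho(z)=\frac{1}{n\pi}\sum_i \Im m_i(z)$ is the averaged density proxy. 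Specializing $z$ to either of the two regimes considered here---constant-order $\eta$, or $\tau$ at a positive distance from $\mathrm{supp}(\rho)$---makes both summands of order $n^{-1/2}$, which delivers the first bound.

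For the improved averaged estimate \eqref{eq:isotropic}, I would invoke the fluctuation-averaging mechanism that accompanies the local law. A Schur-complement expansion of the diagonal resolvent entries gives
\begin{equation*}
G_{ii} - m_i \;=\; m_i^2\Bigl(\sum_j s_{ij}(G_{jj}-m_j) \,-\, Z_i\Bigr) + O_{\prec}(n^{-1}),
\end{equation*}
where $Z_i$ is a centered quadratic form in the $i$-th row of $H$ against the $i$-th minor $G^{(i)}$. A standard large-deviation inequality for such quadratic forms, combined with resolvent decoupling identities (removing rows/columns from $G$), shows that any weighted average $\frac{1}{n}\sum_i \bar w_i Z_i$ with $\max_i |w_i|\le 1$ gains an extra factor $n^{-1/2}$ beyond the pointwise estimate $\max_i|Z_i|\prec n^{-1/2}$. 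Inverting the stability operator $1-2m(z)^2 S$ then propagates this gain from $Z$ to $G_{ii}-m_i$, producing exactly the $O_{\prec}(n^{-1})$ bound.

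The main technical hurdle behind the plan is the uniform invertibility of that stability operator on our macroscopic domain. This is, however, precisely the easiest regime for the Ajanki--Erd\H{o}s--Kr\"uger theory: when $\Im z$ has constant order or $\Re z$ is bounded away from the spectrum, the QVE solution $m(z)$ stays bounded and bounded away from any resonance, so stability is quantitative and $n$-independent. With this noted, both claims follow from \eqref{eq:proposal-microlocal} and the fluctuation-averaging input without any further adaptation beyond restricting the spectral parameter.
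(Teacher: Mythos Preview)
Your proposal is correct and matches the paper's treatment: the lemma is stated there without proof, simply as a macroscopic reformulation of Theorem~1.7 in \cite{Ajanki2017a}, so ``invoke the Ajanki--Erd\H{o}s--Kr\"uger local law and specialize to constant-order $\eta$ (or $\tau$ away from the spectrum)'' is exactly the intended argument. Your additional sketch of the fluctuation-averaging mechanism behind \eqref{eq:isotropic} goes beyond what the paper records but is the standard and correct explanation of why the averaged bound gains the extra $n^{-1/2}$.
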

A direct application of Lemma~\ref{lem:QVE} together with the trivial bound $|G_{ij}(z)|\le \frac{1}{|\Im z|}$ leads to the following corollary, whose proof is omitted.

\begin{corollary}\label{Cor:QVE}
	$\forall \varepsilon>0, K_0\in\mathbb{N}$, there exists a $N_{\varepsilon,K_0}$ s.t. when $n\ge N_{\varepsilon,K_0}$,
	\begin{equation}
		\begin{aligned}
				&\E|G_{ij}(z)-\delta_{ij}m_i(z)|^k\le \frac{n^{\varepsilon}}{n^{k/2}},\\ 
	  &\E|\frac{1}{n}\sum_{i=1}^{n}\bar{w}_i(G_{ii}(z)-m_i(z))|\le\frac{n^{\varepsilon}}{n^{k}},
		\end{aligned}
	 	\end{equation}
 	for $k\in[K_0]$, for any fixed $z\in \mathbb{C}\backslash\mathbb{R}$, where $\boldsymbol{w}=\boldsymbol{w}^{(n)}=[w_1,\cdots,w_n]\in\mathbb{C}^n$ is deterministic with $\max_i|w_i|\le 1$.\\ 
\end{corollary}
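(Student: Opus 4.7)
The plan is to bootstrap the high-probability bounds from Lemma~\ref{lem:QVE} into $L^k$-moment bounds by splitting on the ``good'' event where stochastic domination holds and controlling the complementary event via a deterministic a priori bound. (I read the second inequality as having $|\cdot|^k$ on the left-hand side; otherwise the exponents on the right-hand side are inconsistent.)

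First I would record two deterministic bounds that hold for every realization. The spectral theorem gives $|G_{ij}(z)|\le 1/|\Im z|$, and the QVE solution $m_i(z)$ is the Stieltjes transform of a probability measure on $\mathbb{R}$, so $|m_i(z)|\le 1/|\Im z|$ as well. Hence there is a finite constant $C_z := 2/|\Im z|$ such that both $|G_{ij}(z)-\delta_{ij}m_i(z)|$ and $|\frac{1}{n}\sum_i \bar w_i(G_{ii}(z)-m_i(z))|$ are almost-surely bounded by $C_z$, uniformly in $n$ and in $i,j$.

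Next, given $\varepsilon>0$ and $k\in[K_0]$, I would apply the stochastic domination bound of Lemma~\ref{lem:QVE} with exponent $\varepsilon' := \varepsilon/k$ and tail exponent $D := k/2+1$ (respectively $D:=k+1$ for the averaged bound). For $n\ge N_0(\varepsilon',D)$, the event
\[
\mathcal B := \bigl\{\,|G_{ij}(z)-\delta_{ij}m_i(z)| > n^{\varepsilon'}/\sqrt n \,\bigr\}
\]
has probability at most $n^{-D}$. Decomposing the expectation over $\mathcal B$ and $\mathcal B^c$,
\[
\E|G_{ij}(z)-\delta_{ij}m_i(z)|^k
\le \bigl(n^{\varepsilon'}/\sqrt n\bigr)^k + C_z^k\,n^{-D}
\le \frac{n^\varepsilon}{n^{k/2}} + C_z^k\, n^{-k/2-1},
\]
and the second term is dominated by the first once $n$ is large enough (depending only on $z$, $\varepsilon$, $K_0$). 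The averaged bound follows by identical steps, using the improved rate $O_{\prec}(1/n)$ from \eqref{eq:isotropic} in place of $O_{\prec}(1/\sqrt n)$ and picking $D:=k+1$. Taking the maximum of the finitely many thresholds as $k$ ranges over $[K_0]$ gives the uniform $N_{\varepsilon,K_0}$.

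The main ``obstacle'' here is purely bookkeeping: the exponent in stochastic domination has to be chosen as $\varepsilon/k$ rather than $\varepsilon$ (because raising to the $k$-th power multiplies it), and $D$ must be chosen large enough that the bad-event contribution is absorbed into the polynomial main term. There is no substantive analytic difficulty, which is presumably why the paper leaves the proof to the reader.
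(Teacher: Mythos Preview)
Your proposal is correct and matches precisely what the paper indicates: it states that the corollary follows from ``a direct application of Lemma~\ref{lem:QVE} together with the trivial bound $|G_{ij}(z)|\le \frac{1}{|\Im z|}$'' and omits the proof entirely, so your good-event/bad-event split with the deterministic $1/|\Im z|$ bound on the complement is exactly the intended argument. Your observation about the missing $k$-th power on the left of the second inequality is also right, and the only cosmetic fix needed is to take $\varepsilon'$ strictly smaller than $\varepsilon/k$ (e.g.\ $\varepsilon/(2k)$) so that the sum of the two contributions, rather than just the first, fits under $n^{\varepsilon}/n^{k/2}$.
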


\subsection{Cumulant expansion}

The cumulant expansion formula was first introduced to the random matrices literature by Pastur et al. \cite{Khorunzhy1996}.
\begin{lemma}
	\begin{equation}
		\E[\xi f(\xi)]=\sum_{a=0}^p\frac{\kappa^{a+1}}{a!}\E[f^{(a)}(\xi)]+\varepsilon,
	\end{equation}
	where $|\varepsilon|\le C\sup_t|f^{(p+1)}(t)|E[|\xi|^{p+2}]$ and $C$ depends on $p$ only.
\end{lemma}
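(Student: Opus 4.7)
The plan is to prove this classical cumulant expansion by Taylor expanding $f$ and each $f^{(a)}$ around the origin, and then matching the two polynomial sides via the moment--cumulant recursion. I would first write $f(\xi)=\sum_{k=0}^{p}f^{(k)}(0)\xi^{k}/k!+R_{p}(\xi)$ with Lagrange remainder $|R_{p}(\xi)|\le\sup_{t}|f^{(p+1)}(t)|\,|\xi|^{p+1}/(p+1)!$, and similarly expand $f^{(a)}(\xi)=\sum_{j=0}^{p-a}f^{(a+j)}(0)\xi^{j}/j!+R^{(a)}_{p-a}(\xi)$ only up to order $p-a$, so that the remainder still involves only $f^{(p+1)}$. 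Writing $m_{j}=\E\xi^{j}$, this yields
$$\E[\xi f(\xi)]=\sum_{k=0}^{p}\frac{f^{(k)}(0)}{k!}m_{k+1}+\E[\xi R_{p}(\xi)]$$
and, after substitution,
$$\sum_{a=0}^{p}\frac{\kappa^{a+1}}{a!}\E[f^{(a)}(\xi)]=\sum_{a=0}^{p}\sum_{j=0}^{p-a}\frac{\kappa^{a+1}f^{(a+j)}(0)}{a!\,j!}\,m_{j}+\sum_{a=0}^{p}\frac{\kappa^{a+1}}{a!}\E R^{(a)}_{p-a}(\xi).$$

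Second, I would reindex the double sum by $k=a+j$, so its polynomial part becomes $\sum_{k=0}^{p}(f^{(k)}(0)/k!)\sum_{a=0}^{k}\binom{k}{a}\kappa^{a+1}m_{k-a}$. The lemma then reduces to the moment--cumulant recursion
$$m_{k+1}=\sum_{a=0}^{k}\binom{k}{a}\kappa^{a+1}m_{k-a},\qquad k\ge 0,$$
which I would derive by differentiating $M(\lambda)=\exp F(\lambda)$, where $M(\lambda)=\E e^{\lambda\xi}$ and $F(\lambda)=\log M(\lambda)$: the relation $M'=F'M$, combined with the expansions $M(\lambda)=\sum_{j\ge 0}m_{j}\lambda^{j}/j!$ and $F'(\lambda)=\sum_{a\ge 0}\kappa^{a+1}\lambda^{a}/a!$, yields the identity by reading off the coefficient of $\lambda^{k}/k!$ on both sides via the Cauchy product.

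Once this cancellation is in place, the error $\varepsilon$ is $\E[\xi R_{p}(\xi)]$ minus the weighted sum of the remainders $\E R^{(a)}_{p-a}(\xi)$. The first piece is bounded directly by $\sup_{t}|f^{(p+1)}(t)|\,\E|\xi|^{p+2}/(p+1)!$. Each of the others has the form $|\kappa^{a+1}|\sup_{t}|f^{(p+1)}(t)|\,\E|\xi|^{p-a+1}/(a!(p-a+1)!)$; since $\kappa^{a+1}$ is a universal polynomial in $m_{1},\dots,m_{a+1}$ with coefficients depending only on $a$, applying the log-convexity inequality $\E|\xi|^{j}\le(\E|\xi|^{p+2})^{j/(p+2)}$ for $j\le p+2$ collapses every product of absolute moments of total degree exactly $p+2$ back to $\E|\xi|^{p+2}$, producing the claimed estimate $|\varepsilon|\le C(p)\sup_{t}|f^{(p+1)}(t)|\,\E|\xi|^{p+2}$ with $C$ depending only on $p$. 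The main obstacle is the combinatorial reindexing and the recognition of the resulting inner sum as the Cauchy product encoding $M'=F'M$; the analytic bookkeeping on either side of this identity is then entirely routine.
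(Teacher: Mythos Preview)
The paper does not supply a proof of this lemma at all: it is simply stated, attributed to Khorunzhy--Khoruzhenko--Pastur, and used as a black box throughout. So there is nothing to compare against on the paper's side.

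That said, your argument is the standard one and is correct. The Taylor expansion of $f$ and of each $f^{(a)}$ around $0$, together with the moment--cumulant recursion $m_{k+1}=\sum_{a=0}^{k}\binom{k}{a}\kappa^{a+1}m_{k-a}$ obtained from $M'=F'M$, does exactly cancel the polynomial parts. One small comment: when you invoke $M(\lambda)=\E e^{\lambda\xi}$ you are implicitly assuming the moment generating function exists, which the lemma does not require; but since you only need the identity among the first $p{+}1$ moments and cumulants, it suffices to take the relation $M'=F'M$ at the level of formal power series (equivalently, to regard the recursion as the definition of the cumulants), and then no analyticity of $M$ is needed. Your error bound is also fine: each $\kappa^{a+1}$ is a universal polynomial of weighted degree $a{+}1$ in the moments, so Lyapunov's inequality $\E|\xi|^{j}\le(\E|\xi|^{p+2})^{j/(p+2)}$ collapses the product $|\kappa^{a+1}|\cdot\E|\xi|^{p-a+1}$ to a constant multiple of $\E|\xi|^{p+2}$, with the constant depending only on $p$.
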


The cumulant expansion formula will serve as another important tool in our analysis. In some literature, it is also known as the generalized Stein's method.

\section{Main results}
\subsection{CLT for LSS of block-Wigner-type matrices}
\label{sec:blockWignerclt}

We first define the random matrix model of concern. Note that the initial motivation comes from the stochastic block model. Intuitively, the block-Wigner-type matrix to be defined should be close to a symmetric block-wise i.i.d. matrix. Further, for simplicity and consistency with the SBM, we require that all the diagonal terms $H_{ii}= 0,\forall i\in[n]$. 

First, we introduce the community and the membership operator.

\begin{definition}
	[Community and membership operator]
	Let  $\{C_k\}_{k\in [K]}$ be any partition of $[n]$ with $K$  components, i.e. 
	\begin{equation*}
		\begin{aligned}
			&C_{k_1}\cap C_{k_2}=\emptyset,\text{ when }k_1\ne k_2,\quad k_1,k_2\in[K].\\ 
			&\cup_{k=1}^KC_k=[n].
		\end{aligned}
	\end{equation*}  We call $C_k$ the $k$-th community and define the community membership operator $\sigma$ s.t.
	$$\sigma(i)=k \text{ iff } i\in C_k, \quad i\in[n],\ k\in[K].$$
\end{definition}
For simplicity, we will use $1_{C_k}$  to denote  the (column) indicator vector of $C_k$, $\forall k\in[K]$.

Further, we assume the community number $K$ is fixed and the sizes of the communities are comparable.
\begin{assumption} \label{as:blockratio}
	There exists $\mathbf{\alpha}=[\alpha_1,\cdots, \alpha_K]$, s.t.	$$n_k:=|C_k|=\alpha_k n, \forall k\in[K].$$
	$$\sum_{k=1}^K\alpha_k=1,\quad 0<\alpha_k<1,\quad\forall k\in[K].$$ 
\end{assumption}    
\begin{definition}\label{def:blockWignermatrix} [block-Wigner-type Matrix]
	Let $\kappa_{ij}^{(a)}$ be the $a$-th cumulant of $(\sqrt{n}H_{ij})$.  If there exists a sequence of  partitions $\{C_k\}_{k\in [K]}=\{C_k^{(n)}\}_{k\in [K]}$, s.t. 
	\begin{enumerate}
		\item [a]$H$ is a real symmetric matrix with mean zero and zero diagonal terms,\\ $\E H=0,$\\
		$H_{ii}=0,\forall i\in[n].$
		\item [b]Assumption \ref{as:blockratio} is satisfied.
		\item [c]The first 4 cumulants of $(\sqrt{n}H_{ij})$ will be fully determined by the partition $\{C_k\}_{k\in [K]}$ and $K\times K$ constant matrices $Q^{(2)},Q^{(3)},Q^{(4)}$, namely, let $\sigma$ be the membership operator induced by   $\{C_k\}_{k\in [K]}$, then 
		$$\kappa_{ij}^{(k)}:=\kappa^{(k)}(\sqrt{n}H_{ij})=\begin{cases} Q^{(k)}_{\sigma(i)\sigma(j)}, &\forall i\ne j\\
			0,& i=j \end{cases}, \quad \forall k=2,3,4,$$
		and $Q^{(2)}_{kl}>0,\forall k,l\in[K]$.
		\item [d]There exists a deterministic sequence $\{v_a\}_{a\ge 5}$, s.t. 
		$$\E |\sqrt{n}H_{ij}|^a\le v_a (\kappa_{ij}^{(2)})^{a/2},\quad a\ge 5.$$
	\end{enumerate}

	Then we say that $\{H_n\}$ are block-Wigner-type matrices with model parameters \\ 
	 $(K,n,\alpha,Q^{(2)},Q^{(3)},Q^{(4)},\{v_a\}_{a\ge 5})$. For simplicity,  we will use $H$ for short  in this paper when there is no confusion.

\end{definition}

With our $K$-block model, one can easily check that the quadratic vector equations (\ref{eq:qve0}) will degenerate into the following $K$-equations.
\begin{proposition}[Quadratic vector equation for the block-Wigner-type matrices]
	Given $H(K,n,\alpha,Q^{(2)},Q^{(3)},Q^{(4)},\{v_a\}_{a\ge 5})$, then for any fixed $z$, the diagonal terms of the resolvent $G=(H-z)^{-1}$ have the following approximation
	\begin{equation}
		|G_{ii}(z)-M_l(z)|=O_{\prec}(\frac{1}{\sqrt{n}}), \forall i\in C_l, \forall l\in[K],
	\end{equation} 
	where $\bf{M}$ $=(M_1,\ldots,M_k): \mathbb{C}_+\rightarrow\mathbb{C}_+^K$ is defined to be the unique solution on the complex upper half-plane of the system
	\begin{equation}\label{eq:QVE}
		-\frac{1}{M_{l}(z)}=z+\sum_{m=1}^{K} Q^{(2)}_{lm} \alpha_mM_{m}(z), \quad \text { for all } \quad l=1, \ldots, K,\quad z \in \mathbb{C}_+.
	\end{equation}
\end{proposition}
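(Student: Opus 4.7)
The plan is to verify the three regularity hypotheses of Lemma~\ref{lem:QVE} for a block-Wigner-type $H$, reduce the resulting $n$-dimensional QVE to a $K$-dimensional one by exploiting block symmetry, and then close a small $O(1/n)$ discrepancy by a stability argument.

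First I would check hypotheses A, B, C of Lemma~\ref{lem:QVE}. Flatness is immediate because $s_{ij}=Q^{(2)}_{\sigma(i)\sigma(j)}/n$ for $i\neq j$ and the entries of $Q^{(2)}$ are bounded. For uniform primitivity, every off-diagonal entry satisfies $s_{ij}\geq q/n$ with $q=\min_{kl}Q^{(2)}_{kl}>0$ by clause (c) of Definition~\ref{def:blockWignermatrix}, so taking $L=2$ gives $(S^2)_{ij}\geq (n-2)q^2/n^2\geq p/n$ for a suitable $p>0$ and all large $n$. The moment bound C follows directly from clause (d). Lemma~\ref{lem:QVE} then yields $|G_{ii}(z)-m_i(z)|=O_\prec(1/\sqrt{n})$ uniformly in $i\in[n]$, where $\mathbf{m}$ is the unique solution of~(\ref{eq:qve0}) on $\mathbb{C}_+$.

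Next I would exploit the block symmetry of $S$. Since $s_{ij}$ depends on $(i,j)$ only through $(\sigma(i),\sigma(j))$, any permutation of indices within a single community leaves~(\ref{eq:qve0}) invariant; uniqueness of the solution on $\mathbb{C}_+$ then forces $m_i(z)=\tilde{M}_{\sigma(i)}(z)$ for some $K$-vector $\tilde{\mathbf{M}}(z)\in\mathbb{C}_+^K$. Substituting this ansatz into~(\ref{eq:qve0}) for an arbitrary $i\in C_l$ and recalling that $s_{ii}=0$ gives
\begin{equation*}
-\frac{1}{\tilde{M}_l(z)}=z+\sum_{m=1}^{K}Q^{(2)}_{lm}\alpha_m\tilde{M}_m(z)-\frac{Q^{(2)}_{ll}}{n}\tilde{M}_l(z),
\end{equation*}
which is the target system~(\ref{eq:QVE}) up to the $O(1/n)$ correction $-Q^{(2)}_{ll}\tilde{M}_l/n$ coming from the missing diagonal.

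Finally I would absorb this $O(1/n)$ perturbation via a standard stability argument. Subtracting the two $K$-dimensional systems and linearizing produces a $K\times K$ linear equation for $\tilde{\mathbf{M}}-\mathbf{M}$ whose right-hand side is of order $1/n$; for $z$ in any compact subset of $\mathbb{C}_+$ bounded away from the real spectrum, the stability theory for the QVE established in~\cite{Ajanki} ensures that the relevant linearization is uniformly invertible, so $|\tilde{M}_l(z)-M_l(z)|=O(1/n)$. Combining this with $|G_{ii}(z)-m_i(z)|=O_\prec(1/\sqrt{n})$ and the block-constancy $m_i=\tilde{M}_{\sigma(i)}$ yields the stated claim. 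The only non-routine point is this QVE stability estimate, but it is inherited from the general $n$-dimensional theory and is essentially elementary in the present finite-dimensional block setting.
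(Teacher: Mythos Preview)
Your proposal is correct and is essentially the approach the paper has in mind; in fact the paper does not supply a proof at all, stating only that ``one can easily check'' that the $n$-dimensional QVE~(\ref{eq:qve0}) degenerates to the $K$-dimensional system~(\ref{eq:QVE}). Your permutation/uniqueness argument for block-constancy and your explicit treatment of the $O(1/n)$ diagonal correction via QVE stability are exactly the details one needs to fill in, and you handle them more carefully than the paper does (which, as the subsequent Remark makes clear, is content to absorb such $O(1/n)$ discrepancies informally).
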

Thus, the Stieltjes transform of the ESD converges to
$$\sum_{l=1}^K\alpha_lM_l(z),$$
and the  corresponding measure $\mu_{\infty}$ is determined by

$$\int_{\mathbb{R}}\frac{1}{x-z}d\mu_{\infty}=\sum_{l=1}^K\alpha_lM_l(z).$$
\begin{remark}
	
	One may find the assumption $\alpha_k=\frac{n_k}{n}$ pretty strong. In general, due to the nature of the rational number, one may only expect that $\alpha_{k}^{(n)}:=\frac{n_k}{n}\rightarrow \alpha_k$. It then directly  follows from the fact  $|M_m(z)|\le |\frac{1}{\Im(z)}|$  that
	$$\sum_{m=1}^{K} Q^{(2)}_{lm} (\alpha_m^{(n)}-\alpha_m)M^{(n)}_{m}(z)\rightarrow 0.$$ Thus, when we consider only the leading order terms of the equations, we have
	\begin{align*}
		-\frac{1}{M^{(n)}_{l}(z)}=z+\sum_{m=1}^{K} Q^{(2)}_{lm} \alpha^{(n)}_mM^{(n)}_{m}(z)=z+\sum_{m=1}^{K} Q^{(2)}_{lm} \alpha_mM_{m}(z), \forall l\in[K],z \in \mathbb{C}_+.
	\end{align*}
	
	In other words, the leading term of $M_l(z)$ and $M^{(n)}_l(z)$ will follow the same QVE on the complex upper half-plane by the uniqueness of the solution of the QVE. Then w.l.o.g. we may simply treat the case as $\alpha_k=\frac{n_k}{n}$.
	
	One may argue that the above argument only implies that the limiting spectral distribution will be the same. We claim that it will not affect our CLT as well. Precisely, one may check the system of equations in the preceding sections and note that all the coefficients will count only up to order 1, and all the limiting functions will be fixed once the $|\alpha_{k}^{(n)}-\alpha_k|=o(1), \forall k$.
	
	The minor order terms in $|\alpha_{k}^{(n)}-\alpha_k|$ do matter, not in our CLT, but in the normalization term $-n\int fd\mu_{\infty}$.
	
\end{remark}

\begin{theorem}\label{thm:mainBlockWigner}
	Let the matrix $H:=H_n$ be a sequence of block-Wigner-type matrices with model parameter $(K,n,\alpha,Q^{(2)},Q^{(3)},Q^{(4)},\{v_{a}\}_{a\ge5})$. Let $Co_1(z)$ and $Co_2(z)$ be $K\times K$ matrices defined by\begin{equation}
		(Co_1(z))_{kl}:= \frac{Q^{(2)}_{kl}\alpha_kM_k(z)}{z}-\delta_{kl}\frac{1}{zM_k(z)},
	\end{equation}	
	and
	\begin{equation}
		(Co_2(z_1,z_2))_{kl}:= \frac{Q^{(2)}_{kl}\alpha_kM_k(z_2)}{z_1}-\delta_{kl}\frac{1}{z_1M_k(z_1)}
	\end{equation}
	respectively.	
	Then the  spectral empirical process $G_n=(G_n(f)):=\sum_{i=1}^nf(\lambda_i)-n\int fd\mu_{\infty}$ indexed by the set of analytic functions $\mathcal{A}$ converges weakly in finite dimension to a Gaussian process $G:=\{G(f):f\in\mathcal{A} \}$ with mean function $M(f)$ and the covariance function $V(f,g)$ to be defined below. The mean function is 
	$$M(f)=-\frac{1}{2\pi i}\int_{\Gamma}Mean(z)f(z)dz,$$
	where $\Gamma$ is a contour that encloses the support of spectrum of $H_n$
	and 
	\begin{align*}Mean(z)=&\sum_{k=1}^KY_{k}(z),
	\end{align*}
	where $\mathbf{Y}(z)=[Y_{1}(z),\cdots,Y_{K}(z)]^{\top}$ is the solution of 
	\begin{equation*}
		\begin{aligned}
			&	Co_1(z) \mathbf{Y}(z)\\ =&-\frac{1}{z}diag(Q^{(2)}X(z))+\frac{2}{z}[\alpha_1Q_{11}^{(2)}M_1^2,\cdots,\alpha_KQ_{KK}^{(2)}M_K^2]^{\top}\\ &-\frac{1}{z}diag\left[Q^{(4)}\left(\alpha_l\alpha_mM_l^2M_m^2\right)_{l,m=1}^K\right],
		\end{aligned}
	\end{equation*}
	and $X(z)=(X_{lm}(z))_{l,m=1}^K$ is defined by
	\begin{equation*}
		Co_1(z) X(z)=-\frac{1}{z}Diag([{\alpha_1M_1(z)},\ldots,{\alpha_KM_K(z)}]^{\top}).
	\end{equation*}

	The covariance function is
	\begin{equation*}
		V(f,g)=\frac{-1}{4\pi^2}\int_{\Gamma}\int_{\Gamma}f(z_1)g(z_2)Cov{(z_1,z_2)}dz_1dz_2,
	\end{equation*}
	where $Cov{(z_1,z_2)}=\sum_{l,m=1}^KZ_{lm}$, and $Z:=(Z_{lm}(z_1,z_2))_{l,m=1}^K$ satisfies the equation
	\begin{equation}\label{eq:covariancefunc}
		\begin{aligned}
			&z_1(Co_1Z)_{lm}\\ 
			=&-\sum_{k=1}^K2Q^{(2)}_{lk}W_{l,m,k}(z_1,z_2)+2Q^{(2)}_{ll}M_l(z_1)X_{lm}(z_2)\\ 
			&-\sum_{k=1}^KQ^{(4)}_{lk}\alpha_kM_l(z_1)M_k(z_1)M_k(z_2)X_{lm}(z_2)-\sum_{k=1}^KQ^{(4)}_{lk}\alpha_lM_l(z_1)M_k(z_1)M_l(z_2)X_{km}(z_2),
		\end{aligned}
	\end{equation}
	where 
	$W=(W_{l,m,r})$ is a $K\times K \times K$ tensor and the vector $\mathbf{\tilde{W}}^{(l,m)}= [W_{l,m,1},\cdots,W_{l,m,K}]^{\top}$ satisfies the equation
	\begin{equation}
		\begin{aligned}
			&(z_1Co_2(z_1,z_2)\mathbf{\tilde{W}}^{(l,m)}(z_1,z_2))_r=-\sum_{k=1}^KQ^{(2)}_{rk}\tilde{X}_{lk}(z_1,z_2)X_{mr}(z_2)-\delta_{rl}X_{lm}(z_2),
		\end{aligned}
	\end{equation}
	where $\tilde{X}=(\tilde{X}_{lm})_{l,m=1}^K$ satisfies
	\begin{equation*}
		\begin{aligned}
			Co_2(z_1,z_2)\tilde{X}(z_1,z_2)=-\frac{1}{z_1}Diag([{\alpha_1M_1(z_2)},\ldots,{\alpha_KM_K(z_2)}]^{\top}).
		\end{aligned}
	\end{equation*}

\end{theorem}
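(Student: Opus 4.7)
The plan is to reduce the theorem to a finite-dimensional central limit theorem for $\operatorname{Tr}(G(z))$ via Cauchy's formula. Since $f$ is analytic on a neighborhood of $\operatorname{supp}(\mu_\infty)$ and, by Lemma~\ref{lem:QVE}, the spectrum of $H_n$ is contained in such a neighborhood with overwhelming probability, I would write
\begin{equation*}
G_n(f) = -\frac{1}{2\pi i}\oint_\Gamma f(z)\bigl[\operatorname{Tr}(G(z)) - n\, m_\infty(z)\bigr]\,dz, \qquad m_\infty(z) := \sum_{l=1}^K \alpha_l M_l(z),
\end{equation*}
so it suffices to prove (i) $\mathbb{E}\operatorname{Tr}(G(z)) - n\,m_\infty(z) \to \operatorname{Mean}(z)$ uniformly on $\Gamma$, (ii) finite-dimensional Gaussian convergence of the centered process $\langle \operatorname{Tr}(G(z))\rangle$ on $\Gamma$ with covariance $\operatorname{Cov}(z_1,z_2)$. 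The Gaussian structure, the mean, and the covariance formulas in the theorem will then come out of the integral representation.

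For the mean, I start from the resolvent identity $(HG)_{ii} = 1 + zG_{ii}$ in the form $1 + z\,\mathbb{E}[G_{ii}(z)] = \sum_j \mathbb{E}[H_{ij} G_{ji}(z)]$ and apply the cumulant expansion to each summand with $\xi = H_{ij}$ and $f(\xi) = G_{ji}(z)$, using $\kappa^{(a)}(H_{ij}) = Q^{(a)}_{\sigma(i)\sigma(j)}\,n^{-a/2}$ for $i\ne j$ and the resolvent derivative $\partial_{H_{ij}} G_{ab} = -G_{ai}G_{jb} - G_{aj}G_{ib}$. Truncating at the fourth cumulant (the remainder being controlled by condition (d) of Definition~\ref{def:blockWignermatrix} and Corollary~\ref{Cor:QVE}), the $\kappa^{(2)}$ term yields the self-consistent piece that, upon replacing $\mathbb{E}[G_{ii}(z)]$ by $M_{\sigma(i)}(z)$ to leading order, reproduces the QVE \eqref{eq:QVE}; the first subleading order gives a linear equation for $\mathbf{Y}(z)$ whose governing operator is precisely $Co_1(z)$. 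The inhomogeneity on the right-hand side collects three groups: a diagonal contribution $\propto Q^{(2)}_{ll}\alpha_l M_l^2$ from the $G_{ji}^2$ term at $i=j$ and the failure of independence on the diagonal, a $Q^{(4)}$ contribution from the third derivative of $G_{ji}$, and a linear-response term involving $X(z)$ that arises from block-averaging $\mathbb{E}[G_{jj} G_{ii}] - M_{\sigma(j)}M_{\sigma(i)}$ back against $Co_1(z)$. The auxiliary $X(z)$ itself is defined as the $Co_1$-resolvent of a diagonal source, i.e. the linear response of $\mathbb{E}[G_{ii}]$ to a block-diagonal perturbation.

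For the covariance, the identical starting point multiplied by a second centered trace gives
\begin{equation*}
z_1\,\mathbb{E}\bigl[\langle G_{ii}(z_1)\rangle\,\langle \operatorname{Tr} G(z_2)\rangle\bigr] = \sum_j \mathbb{E}\bigl[H_{ij}\,G_{ji}(z_1)\,\langle \operatorname{Tr} G(z_2)\rangle\bigr],
\end{equation*}
and I apply cumulant expansion with $f(\xi) = G_{ji}(z_1)\langle \operatorname{Tr} G(z_2)\rangle$. Derivatives that fall on $G_{ji}(z_1)$ reproduce, after block-averaging and applying Corollary~\ref{Cor:QVE}, the left-hand side operator $Co_1(z_1)Z$. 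Derivatives that fall on $\langle \operatorname{Tr} G(z_2)\rangle$ produce two-point objects $\mathbb{E}[G_{ab}(z_1) G_{cd}(z_2)]$ between resolvents at different spectral parameters; these are the source of the $W$-tensor. Running the same resolvent-identity plus cumulant expansion on $\mathbb{E}[G_{ij}(z_1) G_{ji}(z_2)]$ (with the appropriate block averaging) gives a closed equation for $W$ with operator $Co_2(z_1,z_2)$, which is the natural two-parameter analogue of $Co_1$ and depends asymmetrically on $(z_1,z_2)$ because only $G(z_1)$ receives the $\partial_{H_{ij}}$. The matrix $\tilde X$ is then the analogue of $X$ with the second spectral parameter inserted where appropriate. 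Summing $i$ over blocks, weighted by $\alpha_l$, and reading off the leading $O(n^{-1})$ gives \eqref{eq:covariancefunc}.

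The main obstacle is the combinatorial bookkeeping of the covariance expansion: one must identify precisely which products of $G$-entries survive at order one (namely those built from at most one off-diagonal factor, by the $O_\prec(n^{-1/2})$ bound from the local law), show that $\kappa^{(3)}$ contributions cancel in the leading order, and check that the $\kappa^{(5+)}$ remainder is $o(1)$ using assumption (d) of Definition~\ref{def:blockWignermatrix} together with the bound $|G_{ij}| \le |\Im z|^{-1}$. Once the mean and covariance equations are established, Gaussianity of finite-dimensional distributions follows by showing that, for every $k \ge 3$, the $k$-th joint cumulant of $(\operatorname{Tr} G(z_1),\dots,\operatorname{Tr} G(z_k))$ is $o(1)$; this is obtained by iterating the same cumulant-expansion machinery on $k$-point correlators, each iteration yielding one additional factor of $n^{-1/2}$ from an off-diagonal $G$-entry, as in the standard Khorunzhy--Khoruzhenko--Pastur scheme.
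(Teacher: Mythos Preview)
Your overall architecture---Cauchy integral, resolvent identity $zG=HG-I$, cumulant expansion in $H_{ij}$, then block-averaging via the local law---is exactly the paper's approach. Two points need correction.

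First, your interpretation of the auxiliary quantities $X$, $\tilde X$, $W$ is off, and this matters because it is precisely how they enter the proof. In the paper, $X_{lm}(z)$ is the limit of $\frac{1}{n}\mathbb{E}\operatorname{Tr}(G(z)T_lG(z)T_m)$ (Lemma~\ref{eq:trtgtg}), not a linear response of $\mathbb{E}[G_{ii}]$; it arises because the $\kappa^{(2)}$ term in the expansion of $\mathbb{E}\operatorname{Tr}(HG)$ produces $\sum_{ij}\kappa^{(2)}_{ij}G_{ij}^2$, which after blocking is $\sum_{l,m}Q^{(2)}_{lm}\operatorname{Tr}(T_lGT_mG)$. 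One must then run a \emph{separate} cumulant expansion on $\mathbb{E}\operatorname{Tr}(GT_lGT_m)$ to close the system, and this is where the equation $Co_1X=-\frac{1}{z}\operatorname{Diag}(\alpha_lM_l)$ comes from. Likewise $\tilde X_{lm}$ is the limit of $\frac{1}{n}\mathbb{E}\operatorname{Tr}(G(z_1)T_lG(z_2)T_m)$ and $W_{l,m,r}$ is the limit of $\frac{1}{n}\mathbb{E}\operatorname{Tr}(G(z_1)T_lG(z_2)T_mG(z_2)T_r)$---a normalized \emph{trace} of a product of three resolvents and three block projections, not a two-point function $\mathbb{E}[G_{ab}(z_1)G_{cd}(z_2)]$. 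It appears because the derivative $\partial_{H_{ij}}\langle\operatorname{Tr}(T_mG(z_2))\rangle$ hitting the covariance expansion yields $(G(z_2)T_mG(z_2))_{ij}$, which after multiplication by $G_{ij}(z_1)$ and blocking gives $\operatorname{Tr}(G(z_1)T_lG(z_2)T_mG(z_2)T_k)$. Closing the covariance equation therefore requires yet another layer of cumulant expansion on these triple-trace objects; along the way one also needs the nontrivial fluctuation bound $\mathbb{E}|\langle\underline{T_lGT_mG}\rangle|^2=O(n^{-2})$ to discard cross terms, which the paper obtains by a bootstrap argument and which your proposal does not address.

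Second, you omit tightness of the process $z\mapsto\langle\operatorname{Tr}G(z)\rangle$ on the contour $\Gamma$. Finite-dimensional convergence of $\operatorname{Tr}G(z)$ at fixed points is not enough to push the limit through the contour integral and conclude convergence of $G_n(f)$; the paper establishes $\mathbb{E}|\langle\operatorname{Tr}G(z_1)\rangle-\langle\operatorname{Tr}G(z_2)\rangle|^2=O(|z_1-z_2|^2)$ via the same $\operatorname{Tr}(T_lG(z_1)T_mG(z_2))$ estimates. Your route to normality (vanishing of higher joint cumulants) differs from the paper's, which uses the Tikhomirov--Stein characteristic-function method; both are legitimate, but note that your iteration argument ``each iteration yielding one additional factor of $n^{-1/2}$'' is too optimistic as stated---the gain per iteration comes from the \emph{averaged} local law \eqref{eq:isotropic}, not from a single off-diagonal entry, and making this precise requires the same hierarchy of trace equations as above.
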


\subsection{Application to the stochastic block model: a step forward with the  data-driven renormalized adjacency matrices of SBM}
\label{sec:applicationtosbm}

As mentioned in the introduction, the stochastic block model serves as one of the primary motivations for the block-Wigner-type matrices. Recall  that a stochastic block model is a random graph with $n$ nodes which are divided into $K$ disjoint communities $\{C_k\}_{k=1}^K$, the size of $k$-th community $n_k=|C_k|$ satisfies assumption \ref{as:blockratio}. The upper-triangular entries of the symmetric adjacency matrix are independent Bernoulli random variables whose parameters are determined by the community membership of the nodes. In other words, we have a $K\times K$ deterministic symmetric matrix $(\tilde{P}_{ij})_{K\times K}$, such that the symmetric adjacency matrix of the network follows the rule:
\begin{equation}\label{eq:sbmsetting}
	\begin{aligned}
		A_{ij}&\in\{0,1\}, \forall i,j\in[n],\\ 
		A_{ii}&=0,\forall i\in [n],\\  
		p_{ij}:&=P(A_{ij}=1)=\tilde{P}_{\sigma(i)\sigma(j)},\\
		A_{ij}\indep A_{kl}, &\text{ for }(i,j)\ne (k,l),\ i<j,\ k<l.
	\end{aligned}
\end{equation}
where $\sigma(i)\in\{1,2,\ldots,K\}$ is the membership operator defined by this model and indicates which community node $i$ belongs to.  We can see that it fits the description of our block-Wigner-type matrices after the renormalization:
\begin{equation}\label{defi:nonhatversion}
	H_{ij}^{(A)}=\begin{cases}
		\frac{A_{ij}-p_{ij}}{\sqrt{n}}, & i\ne j\\ 
		0, & i=j.
	\end{cases}
\end{equation}

Further, in statistical application such as the hypothesis testing on a stochastic block model, the connection probabilities $p_{ij}$'s are not known a priori. Instead, they need to be directly estimated from the observed graph $(A_{ij})_{n\times n}$ as defined in \eqref{eq:sbmsetting}. Assume the membership operator $\sigma$ is known, we can define the empirical estimator $$\hat{p}_{ij}=\sum\limits_{\alpha\in C_{\sigma(i)},\beta\in C_{\sigma(j)}}\frac{A_{\alpha\beta}}{N_{\sigma(i)\sigma(j)}}$$
for $p_{ij}$, where $N_{kl}$ is the total number of non-diagonal entries whose first index falls in the $k$-th community, and the second index lies in the $l$-th community, $k,l\in[K]$. Namely
\begin{equation}\label{eq:pestimator}
	N_{kl}=\begin{cases}
		n_kn_l, &\text{if $k\ne l$},\\ 
		n_k(n_k-1), &\text{if $k=l$}.
	\end{cases}
\end{equation}
We then consider the data-driven renormalized adjacency matrix
\begin{equation}\label{defi:hatversion}
	\begin{aligned}
		\hat{H}^{(A)}_{ij}=\begin{cases}
			\frac{A_{ij}-\hat{p}_{ij}}{\sqrt{n}}, & i\ne j\\ 
			0, & i=j.
		\end{cases}\\ 	
	\end{aligned}
\end{equation}

It turns out that the LSS of $H$ and $\hat{H}$ will share similar asymptotic behavior. We have the following theorem. 

\begin{theorem}\label{thm:mainBlockWigneremp}
	Let the matrix $H^{(A)}$ be defined by \eqref{defi:nonhatversion}, which is  a  block-Wigner-type  matrix with model parameter $(K,n,\alpha,Q^{(2)},Q^{(3)},Q^{(4)},\{v_a\}_{a\ge 5})$, and $\hat{H}^{(A)}$ be defined via \eqref{defi:hatversion}, then the spectral empirical  process $\hat{G}_n(f):=\sum_{i=1}^nf(\lambda_i(\hat{H}))-n\int fd\mu_{\infty}$ will share the same limiting distributions with $G_n(f):=\sum_{i=1}^nf(\lambda_i(H))-n\int fd\mu_{\infty}$ established in Theorem \ref{thm:mainBlockWigner}. 	
\end{theorem}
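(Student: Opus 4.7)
The plan is to show that replacing $p_{ij}$ by $\hat p_{ij}$ induces a perturbation $\Delta := H^{(A)} - \hat H^{(A)}$ that is both of bounded rank and of small operator norm, so that the two resolvent traces---and hence the two linear spectral statistics---differ by $o_P(1)$. Combined with Theorem \ref{thm:mainBlockWigner} and Slutsky's theorem, this immediately yields the claimed identical limiting distribution.

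From \eqref{defi:nonhatversion}--\eqref{defi:hatversion} one has $\Delta_{ii}=0$ and $\Delta_{ij}=\frac{\hat p_{\sigma(i)\sigma(j)}-p_{\sigma(i)\sigma(j)}}{\sqrt n}$ for $i\neq j$. Setting $\epsilon_{kl}:=\hat p_{kl}-p_{kl}$, decompose $\Delta=\tilde\Delta-D$, where
\begin{equation*}
\tilde\Delta\;=\;\frac{1}{\sqrt n}\sum_{k,l=1}^{K}\epsilon_{kl}\,1_{C_k}1_{C_l}^{\top}
\end{equation*}
and $D$ is the diagonal matrix whose $(i,i)$ entry equals $\tilde\Delta_{ii}$. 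Each $\epsilon_{kl}$ is a centered average of $N_{kl}\asymp n^{2}$ independent Bernoullis, so standard Bernstein-type concentration gives $\epsilon_{kl}=O_{\prec}(n^{-1})$. Hence
\begin{equation*}
\|\tilde\Delta\|_{\mathrm{op}}\le\frac{1}{\sqrt n}\sum_{k,l}|\epsilon_{kl}|\sqrt{n_k n_l}=O_{\prec}(n^{-1/2}),\quad \|D\|_{\mathrm{op}}=O_{\prec}(n^{-3/2}),
\end{equation*}
and both $\tilde\Delta$ and $D$ have rank at most $K$ (each taking only $K$ distinct values in the block pattern), so $\mathrm{rank}(\Delta)\le 2K$ and $\|\Delta\|_{\mathrm{op}}=O_{\prec}(n^{-1/2})$.

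For any analytic $f$ and a contour $\Gamma$ enclosing $\mathrm{supp}(\mu_\infty)$ at a fixed positive distance, Cauchy's formula gives
\begin{equation*}
\hat G_n(f)-G_n(f)\;=\;-\frac{1}{2\pi i}\oint_{\Gamma}f(z)\bigl[Tr(\hat G(z))-Tr(G(z))\bigr]\,dz.
\end{equation*}
Because $\|G(z)\|_{\mathrm{op}}\le1/|\Im z|$ is uniformly bounded on $\Gamma$ while $\|\Delta\|_{\mathrm{op}}=o_P(1)$, the Neumann expansion $\hat G-G=\sum_{k\ge 1}(G\Delta)^{k}G$ converges in operator norm uniformly on $\Gamma$ with probability tending to one. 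The trace--rank inequality $|Tr(AB)|\le\mathrm{rank}(A)\,\|A\|_{\mathrm{op}}\|B\|_{\mathrm{op}}$ (applied cyclically with $A=\Delta$) yields
\begin{equation*}
|Tr((G\Delta)^{k}G)|\;\le\;\mathrm{rank}(\Delta)\,\|\Delta\|_{\mathrm{op}}^{k}\,\|G\|_{\mathrm{op}}^{k+1}\;=\;O_{\prec}(n^{-k/2})
\end{equation*}
for every $k\ge 1$. Summing the geometric series gives $|Tr(\hat G-G)|=O_{\prec}(n^{-1/2})$ uniformly on $\Gamma$, so the contour integral yields $\hat G_n(f)-G_n(f)=o_P(1)$. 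The extension to finite-dimensional marginals follows by applying the same estimate to each coordinate and appealing to Cram\'er--Wold.

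The main subtlety is that $\Delta$ and $G$ are statistically dependent, since both are functions of the same Bernoullis $A_{ij}$; no decoupling lemma is therefore available. The argument above circumvents this by using only pathwise operator-norm and rank estimates, which are insensitive to the joint law of $(\Delta,G)$. The cost is that we recover only an $o_P(1)$ bound rather than an explicit central-limit correction, but this already suffices for the theorem. A last technical point is the need to verify that the spectrum of $\hat H^{(A)}$ lies inside $\Gamma$ with probability tending to one, which follows at once from $\|\Delta\|_{\mathrm{op}}=o_P(1)$ via Weyl's inequality.
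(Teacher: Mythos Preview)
Your approach is essentially correct and considerably more direct than the paper's, but there is one genuine error that needs repair.

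\textbf{The gap.} You assert that $D$ has rank at most $K$ because its diagonal entries take only $K$ distinct values. This is false: a diagonal matrix with all nonzero entries has full rank $n$, regardless of how few distinct values appear. Consequently your claim $\mathrm{rank}(\Delta)\le 2K$ fails, and the trace--rank inequality cannot be applied to $\Delta$ as a whole.

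\textbf{The fix.} The argument is easily salvaged by treating $\tilde\Delta$ and $D$ separately throughout. You already have $\mathrm{rank}(\tilde\Delta)\le K$ and $\|D\|_{\mathrm{op}}=O_{\prec}(n^{-3/2})$. In the expansion of $Tr((G\Delta)^kG)$ with $\Delta=\tilde\Delta-D$, any term containing at least one factor $\tilde\Delta$ inherits rank $\le K$ and is bounded by $K\|\tilde\Delta\|^{j}\|D\|^{k-j}\|G\|^{k+1}=O_{\prec}(n^{-1/2})$ for all $1\le j\le k$; the single pure-$D$ term satisfies $|Tr((GD)^kG)|\le n\|D\|^{k}\|G\|^{k+1}=O_{\prec}(n^{1-3k/2})\le O_{\prec}(n^{-1/2})$ for $k\ge 1$. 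Summing still gives $Tr(\hat G)-Tr(G)=O_{\prec}(n^{-1/2})$ uniformly on $\Gamma$, and the rest of your argument goes through unchanged.

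\textbf{Comparison with the paper.} The paper observes only the norm bound $\|\hat H-H\|=o_p(\log(n)/\sqrt n)$ and explicitly remarks that this alone is insufficient for the CLT; it then laboriously redoes the entire four-step program (mean, covariance, normality, tightness) for $\hat G$ via cumulant expansions, showing term by term that every correction from $\hat H-H$ is negligible. Your key additional observation---that the off-diagonal part of the perturbation has bounded rank---lets you bypass all of that and conclude directly via Slutsky. This is a genuine and substantial simplification: it replaces several pages of expansion bookkeeping with a two-line deterministic estimate, and makes transparent why the data-driven centering cannot shift the limit. The paper's route, on the other hand, would in principle allow one to extract a more quantitative error (e.g.\ a next-order correction), and its machinery is already in place from the proof of Theorem~\ref{thm:mainBlockWigner}, which may explain why the authors chose it.
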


\section{Outline of the proof}\label{sec:outlineofpfblockWigner}

\subsection{Outline of the proof of Theorem \ref{thm:mainBlockWigner}}
Recall the classic Cauchy integral trick
$f(x)=\frac{1}{2 \pi i} \oint_{\mathcal{C}} \frac{f(z)}{z-x} d z,$ which allows us to rewrite the sum  

\begin{equation}
	\sum_{j=1}^nf(\lambda_j)=-\frac{1}{2\pi i}\oint_{\mathcal{C}}f(z)Tr(G(z))dz,
\end{equation}
where $\mathcal{C}$ is a  contour that encloses the support of $H$ with high probability. Naturally one may expect that the behavior of the linear spectral statistics $\sum_{j=1}^nf(\lambda_j)$ will be governed by that of the quantity 
$TrG(z).$

Inspired by the previous works such as   \cite{Khorunzhy1996}\cite{landon2019applications}\cite{Bai2004}\cite{bai2005}, our proof first  combines the characteristic function method with the cumulant expansion to prove the finite-dimensional convergence of the process $\langle Tr(G(z))\rangle$, then with the tightness of the process  we proceed to the linear spectral statistics.  To be more specific, our tasks are divided into 4 steps mainly:
\begin{itemize}
	\item Expectation;
	\item Covariance;
	\item Normality;
	\item Tightness.
\end{itemize}

We use the resolvent identities $G = \frac { 1 } { z } ( H G - I )$ so that the cumulant expansion formula could be applied. Then we use the block structure to simplify the calculations.  Let $T_k:=Id|_{C_k}$ be the restriction of the identity matrix on the $k$-th community $C_k$, we have the following decomposition for $\E Tr(G(z))$:	
\begin{equation}\label{eq:meandecomposition}
	\begin{aligned}
		&z\E Tr(G(z))=\E Tr(HG-I)\\ 
		=&\E\Big\{ -n-\frac{1}{n}\sum_{i,j=1,i\ne j}^n\kappa_{ij}^{(2)}(G_{ij}^2+G_{ii}G_{jj}) \\ 
		&+\frac{1}{n^{3/2}}\sum_{i,j=1,i\ne j}^n\frac{\kappa_{ij}^{(3)}}{2!}(2G_{ij}^3+6G_{ij}G_{ii}G_{jj}) \\ 
		&-\frac{1}{n^2}\sum_{i,j=1,i\ne j}^n\frac{\kappa_{ij}^{(4)}}{3!}(6G_{ij}^4+36G_{ij}^2G_{ii}G_{jj}+6G_{ii}^2G_{jj}^2)\\ 
		=&-n-I_{1,1}-I_{1,2}-I_{1,3}+\varepsilon_{I_1},
	\end{aligned}
\end{equation}
where  

\begin{equation}
	\begin{aligned}
		I_{1,1}	&=\frac{1}{n}\E\sum_{l,m=1}^K\sum_{i\in C_l,j\in C_m,i\ne j}^n\kappa_{ij}^{(2)}	G_{ij}^2\\ 
		&=\frac{1}{n}\E\sum_{l,m=1}^KQ^{(2)}_{lm}	\sum_{i\in C_l,j\in C_m}G_{ij}^2-\frac{1}{n}\E\sum_{l=1}^KQ^{(2)}_{ll}	\sum_{i\in C_l}G_{ii}^2\\ 
		&=\frac{1}{n}\E\sum_{l,m=1}^KQ^{(2)}_{lm}Tr(T_lGT_mG)-\frac{1}{n}\E\sum_{l=1}^K Q^{(2)}_{ll}	\sum_{i\in C_l}G_{ii}^2,
	\end{aligned}
\end{equation}

\begin{equation}\label{eq:meandecomposition2}
	\begin{aligned}
		I_{1,2}	&=\frac{1}{n}\E\sum_{l,m=1}^K\sum_{i\in C_l,j\in C_m,i\ne j}^n\kappa_{ij}^{(2)}	G_{ii}G_{jj}\\ 
		&=\frac{1}{n}\E\sum_{l,m=1}^KQ^{(2)}_{lm}	\sum_{i\in C_l,j\in C_m}	G_{ii}G_{jj}-\frac{1}{n}\E\sum_{l=1}^nQ^{(2)}_{ll}	\sum_{i\in C_l}G_{ii}^2\\ 
		&=\frac{1}{n}\E\sum_{l,m=1}^KQ^{(2)}_{lm}Tr(T_lG)Tr(T_mG)-\frac{1}{n}\E\sum_{l=1}^nQ^{(2)}_{ll}	\sum_{i\in C_l}G_{ii}^2,
	\end{aligned}
\end{equation}
and 
\begin{equation}
	\begin{aligned}
		I_{1,3}=&\frac{1}{n^{2}}\E\sum_{i,j=1,i\ne j}^n\kappa_{ij}^{(4)}G_{ii}^2G_{jj}^2.
	\end{aligned}
\end{equation}

The remainder $\varepsilon_{I_1}$ will have a vanishing order \(O(\frac{1}{n^{1/2}})\).

Though $I_{1,3}$ can be directly estimated from the first-order approximation from the local law, approximations for $I_{1,1}$ and $I_{1,2}$ are not so straightforward. We need to derive new systems of equations for the quantities. To be more precise, we introduce the following lemmas.
\begin{lemma}\label{eq:trtgtg}
	The vector  $$\mathbf{X}_{\mathbf{GTGT}}^{(l)}{(z)}=[\frac{1}{n}\E Tr(G(z)T_lG(z)T_1),\cdots,\frac{1}{n}\E Tr(G(z)T_lG(z)T_K)]^{\top}$$ satisfies the following system of equations
	\begin{equation}\label{eq:GTGTl}
		Co_1(z)\mathbf{X}_{\mathbf{GTGT}}^{(l)}{(z)}=\mathbf{B}^{(l)}{(z)}
	\end{equation}up to order 1, 
	where 
	\begin{equation*}
		\begin{matrix}
			&\mathbf{B}^{(l)}{(z)}=  [0,\ldots,0,&-\frac{\alpha_lM_l(z)}{z}&,0,\ldots,0]^T.\\ 
			&    &\uparrow&\\
			&    &\text{l-th}&
		\end{matrix}	
	\end{equation*}   
	Further, the matrix $M_{GTGT}(z)=\big( \frac{1}{n}Tr(G(z)T_lG(z)T_m)\big)_{l,m=1}^K$ satisfies
	\begin{equation}\label{eq:MGTGT}
		M_{GTGT}(z)=-\big(Q^{(2)}-Diag ([\frac{1}{\alpha_1M_1^2(z)},\cdots,\frac{1}{\alpha_KM_K^2(z)}]^{\top})\big)^{-1}.
	\end{equation}

\end{lemma}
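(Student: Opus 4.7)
My plan is to mirror the mean derivation in \eqref{eq:meandecomposition}: apply the resolvent identity $G = z^{-1}(HG - I)$ to one copy of $G$ inside $Tr(GT_l G T_m)$, run a cumulant expansion on the resulting $\E\bigl[H_{ij}(\,\cdot\,)\bigr]$ sum, and close the system via the local-law replacement of diagonal entries from Corollary~\ref{Cor:QVE}. Fixing $l,m \in [K]$, the identity yields
$$z\,\E\,Tr(GT_l G T_m) \;=\; \sum_{i,j}\E\bigl[H_{ij}(GT_l G T_m)_{ji}\bigr] \;-\; \E\,Tr(T_l G T_m).$$
By Corollary~\ref{Cor:QVE}, the second term equals $n\alpha_l M_l(z)\delta_{lm} + O_{\prec}(1)$, which after division by $n$ supplies the sole non-zero entry of the inhomogeneous vector $\mathbf{B}^{(l)}(z)$.

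I would then cumulant-expand each $\E[H_{ij}(GT_l G T_m)_{ji}]$. Since $\kappa^{(a+1)}_{ij} = n^{-(a+1)/2} Q^{(a+1)}_{\sigma(i)\sigma(j)}$ for $i \neq j$ and each derivative $\partial_{H_{ij}}G = -G(E_{ij}+E_{ji})G$ inflates only by $O_{\prec}(1)$-size $G$-entries, the $a\ge 2$ contributions collapse to $O(n^{-1/2})$ after the double sum and are absorbed into the order-$1$ remainder. The Gaussian term ($a=1$) produces, via Leibniz applied to the two $G$-factors of $GT_lGT_m$, a finite list of expectations of products of $G$-entries. Using Corollary~\ref{Cor:QVE} to replace $G_{ii}\to M_{\sigma(i)}(z)$ and to discard isolated off-diagonal $G_{ij}$ (each of size $O_{\prec}(n^{-1/2})$), the surviving order-$1$ pieces consolidate into block averages of the form $\frac{1}{n}Tr(T_r G T_l G) = (\mathbf{X}^{(l)}_{GTGT}(z))_r$. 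Using the QVE \eqref{eq:QVE} to eliminate residual quadratic $M$-terms and reading off the coefficient of each $(\mathbf{X}^{(l)}_{GTGT}(z))_k$ reproduces the matrix $Co_1(z)$ on the left-hand side, establishing $Co_1(z)\mathbf{X}^{(l)}_{GTGT}(z) = \mathbf{B}^{(l)}(z)$ up to an $O(n^{-1/2})$ remainder.

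For the second statement I would stack $\{\mathbf{X}^{(l)}_{GTGT}(z)\}_{l=1}^K$ as the columns of $M_{GTGT}(z)$, so that the $K$ vector equations assemble into
$$Co_1(z)\, M_{GTGT}(z) \;=\; -\tfrac{1}{z}\,Diag\bigl([\alpha_1 M_1(z),\ldots,\alpha_K M_K(z)]^{\top}\bigr).$$
Substituting the explicit form $Co_1(z) = z^{-1}\bigl(Q^{(2)} Diag([\alpha_k M_k]) - Diag([1/M_k])\bigr)$ and multiplying on the right by $Diag([1/(\alpha_k M_k)])$ reduces the identity to $\bigl(Q^{(2)} - Diag([1/(\alpha_k M_k^2(z))])\bigr)M_{GTGT}(z) = -Id$, which inverts to the claimed closed form \eqref{eq:MGTGT}. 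I expect the main obstacle to be the clean bookkeeping of all Leibniz-derivative terms generated by the cumulant expansion, together with the uniform $O(n^{-1/2})$ control of the off-diagonal and higher-cumulant remainders on the contour of interest; both rely on repeated application of the averaged bound \eqref{eq:isotropic} to consolidate block-indexed sums of diagonal $G$-entries back into the $M_k$'s.
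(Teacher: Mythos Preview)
Your proposal is correct and follows essentially the same route as the paper: resolvent identity on one $G$, cumulant expansion of $\E[H_{ij}(GT_lGT_m)_{ji}]$, trivial bound on the $a\ge 2$ cumulant terms, local-law replacement of diagonal entries to close a $K$-dimensional linear system, and then assembly of the $K$ systems into the matrix equation for $M_{GTGT}$. One minor slip: from the definition $(Co_1)_{kl}=z^{-1}(Q^{(2)}_{kl}\alpha_kM_k-\delta_{kl}/M_k)$ the matrix form is $Co_1=z^{-1}\bigl(Diag([\alpha_kM_k])\,Q^{(2)}-Diag([1/M_k])\bigr)$ (diagonal on the \emph{left}), so to reach \eqref{eq:MGTGT} you multiply on the \emph{left} by $Diag([1/(\alpha_kM_k)])$, not on the right.
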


\begin{lemma}\label{eq:trtgsubleading}
	The vector $$\mathbf{Y}(z)=[\E Tr(T_1G(z))-\alpha_1nM_1(z),\cdots,\E Tr(T_KG(z))-\alpha_KnM_K(z)]^{\top}$$ satisfies the following equation
	\begin{equation}\label{eq:TrG-n}
		\begin{aligned}
			&	Co_1(z) \mathbf{Y}(z)\\ =&-\frac{1}{z}diag(Q^{(2)}X(z))+\frac{2}{z}[\alpha_1Q_{11}^{(2)}M_1^2,\cdots,\alpha_KQ_{KK}^{(2)}M_K^2]^{\top}\\
			&-\frac{1}{z}diag\left[Q^{(4)}\left(\alpha_l\alpha_mM_l^2M_m^2\right)_{l,m=1}^K\right],
		\end{aligned}
	\end{equation}
	
\end{lemma}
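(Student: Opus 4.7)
The plan is to mirror the derivation leading to~\eqref{eq:meandecomposition} but with the outer summation restricted to $i\in C_l$, converting the scalar equation for $\E\text{Tr}(G)$ into a $K$-dimensional linear system for $\mathbf{Y}(z)$. First, by the resolvent identity $zG = HG - I$,
\begin{equation*}
z\,\E\,\text{Tr}(T_l G) \;=\; \sum_{i \in C_l}\sum_{j=1}^n \E H_{ij} G_{ji} \;-\; n_l,
\end{equation*}
and I would apply the cumulant expansion formula to each $\E H_{ij}G_{ji}$ using the same derivative identities $\partial_{H_{ij}}G_{ji} = -G_{ij}^2 - G_{ii}G_{jj}$, $\partial^2_{H_{ij}}G_{ji} = 2G_{ij}^3 + 6G_{ij}G_{ii}G_{jj}$, $\partial^3_{H_{ij}}G_{ji} = -6G_{ii}^2G_{jj}^2 - 36G_{ij}^2G_{ii}G_{jj} - 6G_{ij}^4$ that produced~\eqref{eq:meandecomposition}, with the tail remainder of order $O(n^{-1/2})$ controlled by condition~(d) of Definition~\ref{def:blockWignermatrix}.

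Second, the second-cumulant contribution, after substituting $\kappa^{(2)}_{ij}=Q^{(2)}_{\sigma(i)\sigma(j)}$ for $i\ne j$, reads
\begin{equation*}
-\frac{1}{n}\sum_{m=1}^K Q^{(2)}_{lm}\bigl[\E\text{Tr}(T_lGT_mG) + \E\text{Tr}(T_lG)\text{Tr}(T_mG)\bigr] + \frac{2}{n}Q^{(2)}_{ll}\sum_{i\in C_l}\E G_{ii}^2,
\end{equation*}
the diagonal term originating from the $i=j$ exclusion. Lemma~\ref{eq:trtgtg} replaces $\frac{1}{n}\E\text{Tr}(T_lGT_mG)$ by $X_{lm}(z) + o(1)$; writing $\E\text{Tr}(T_lG)\text{Tr}(T_mG) = (n_lM_l + Y_l)(n_mM_m + Y_m) + \mathrm{Cov}\bigl(\text{Tr}(T_lG),\text{Tr}(T_mG)\bigr)$, both the covariance and the $Y_lY_m$ piece are $o(n)$ by a crude variance bound from the local law, and $\frac{1}{n}\sum_{i\in C_l}\E G_{ii}^2 \to \alpha_lM_l^2$ via the averaged statement of Corollary~\ref{Cor:QVE}. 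The fourth-cumulant contribution collapses at leading order to $-\sum_m Q^{(4)}_{lm}\alpha_l\alpha_m M_l^2M_m^2$, the $G_{ij}^4$ and $G_{ij}^2G_{ii}G_{jj}$ subsums being suppressed by $|G_{ij}|\prec n^{-1/2}$ for $i\ne j$.

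Third, I would invoke the block QVE~\eqref{eq:QVE} twice. The identity $\sum_m Q^{(2)}_{lm}\alpha_m M_m = -1/M_l - z$ gives $-n_l - n\sum_m Q^{(2)}_{lm}\alpha_l\alpha_m M_lM_m = zn_lM_l$, cancelling the leading $zn_lM_l$ from the LHS $z\E\text{Tr}(T_lG)=z(n_lM_l+Y_l)$. The same identity reorganizes the linear-in-$\mathbf{Y}$ pieces $-\sum_m Q^{(2)}_{lm}(\alpha_l M_l Y_m + \alpha_m M_m Y_l)$ together with the remaining $zY_l$ into exactly $z(Co_1(z)\mathbf{Y})_l$. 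Dividing through by $z$ and collecting the $X$-term, the $\frac{2}{z}Q^{(2)}_{ll}\alpha_l M_l^2$ diagonal correction, and the $Q^{(4)}$-term produces the stated RHS.

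The main obstacle will be showing that the third-cumulant contribution
\begin{equation*}
\frac{1}{n^{3/2}}\sum_{\substack{i\in C_l \\ j\ne i}} \kappa^{(3)}_{ij}\,\E\bigl(G_{ij}^3 + 3G_{ij}G_{ii}G_{jj}\bigr)
\end{equation*}
is $o(1)$ and hence absent from the final system. A pointwise bound $|G_{ij}|\prec n^{-1/2}$ only gives $O(n^{\varepsilon})$; a genuine $o(1)$ bound requires either a secondary cumulant expansion on the $\E G_{ij}(\cdots)$ factor or an averaged isotropic-local-law estimate for $\sum_j \E G_{ij}$ that exploits the cancellation from centering. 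A minor technical point is verifying that the apparently $O(1)$ covariance $\mathrm{Cov}(\text{Tr}(T_lG),\text{Tr}(T_mG))$, once divided by $n$, does not contaminate the $\mathbf{Y}$-equation at the working order.
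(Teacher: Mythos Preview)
Your proposal is correct and follows essentially the same route as the paper. The one place where the paper is more efficient is the third-cumulant obstacle you flag: rather than a secondary cumulant expansion or an averaged isotropic estimate, the paper simply rewrites $\sum_{i\in C_l,\,j\in C_m} G_{ii}G_{jj}G_{ij}$ as the bilinear form $\mathbf{a}^\top G\,\mathbf{b}$ with $\mathbf{a}=\operatorname{diag}(T_l G)$, $\mathbf{b}=\operatorname{diag}(T_m G)$, and uses $|\mathbf{a}^\top G\,\mathbf{b}|\le\|\mathbf{a}\|\,\|\mathbf{b}\|\,\|G\|=O(n)$ pathwise; the $n^{-3/2}$ prefactor then yields $O(n^{-1/2})$ directly. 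Your secondary-expansion idea does appear in the paper, but only later in the covariance computation for $I_{2,(2,0)}$, where a sharper bound is needed because of the extra random factor $\langle\underline{T_mG(z_2)}\rangle$; for the mean equation the cruder bound suffices.
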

We refer the proofs to Sections \ref{sec:mean1} and \ref{sec:mean2}.

Similarly, we may use the same techniques to calculate  the covariance function  $Cov{(z_1,z_2)}:= Cov(Tr G(z_1),Tr G(z_2))$.

First we decompose the covariance function of $Tr(G)$ into the following block-wise forms
\begin{equation*}
	\begin{aligned}
		Cov{(z_1,z_2)}=& Cov(Tr G(z_1),Tr G(z_2))= Cov(\sum_{l=1}^K 	Tr(T_{l}G(z_1)),\sum_{m=1}^KTr(T_{m}G(z_2)))\\
		=&\sum_{l,m=1}^KCov_{lm}(z_1,z_2),
	\end{aligned}
\end{equation*}

where  $$Cov_{lm}(z_1,z_2):=Cov(T_{l}G(z_1),T_{m}G(z_2)).$$
%

Our primary problem is to calculate $Cov_{lm}(z_1,z_2)$ to order 1,  $\forall l,m\in[K]$. Note that 	$Cov_{lm}(z_1,z_2)=n^2[\E\underline{T_lG(z_1)}\ \underline{T_mG(z_2)}-\E\underline{T_lG(z_1)}\E\underline{T_mG(z_2)}],$
then we need to calculate the following expansion to the order $\frac{1}{n^2}$,
\begin{equation}
	\begin{aligned}
		&\frac{1}{n^2}z_1Cov_{lm}(z_1,z_2)\\
		=&z_1[\E\underline{T_lG(z_1)}\ \underline{T_mG(z_2)}-\E\underline{T_lG(z_1)}\E\underline{T_mG(z_2)}] =z_1\E\underline{G(z_1)T_l}\langle\underline{T_mG(z_2)} \rangle\\ 
		=&\E\underline{HG(z_1)T_l}\langle\underline{T_mG(z_2)} \rangle
		=\frac{1}{n}\E\sum_{i\in C_l,j}H_{ij}G_{ij}(z_1)\langle\underline{T_mG(z_2)} \rangle\\ 
		=&\frac{1}{n}\E\sum_{i\in C_l,j}\sum_{a+b=0}^5\frac{\kappa_{ij}^{(a+b+1)}}{n^{(a+b+1)/2}a!b!}\frac{\partial^a{G_{ij}(z_1)}\partial^b\langle\underline{T_mG(z_2)}\rangle}{\partial H_{ij}^{a+b}}+\varepsilon_{I_2} \\ 
		=&\sum_{a+b=0}^5I_{2,(a,b)}+\varepsilon_{I_2}.
	\end{aligned}
\end{equation}

It turns out, only $I_{2,(0,1)}$,$I_{2,(1,0)}$,$I_{2,(1,2)}$ have $O(\frac{1}{n^2})$ contributions, which will lead to a set of systems of equations for $\{Cov_{lm}(z_1,z_2)\}_{l,m=1}^K$ as well. The key observation is that similar to the quantities calculated in the mean function, we will explore similar $K$-dimensional systems of equations via cumulant expansion due to the block structure in calculating the covariance function. We refer the details to Section \ref{sec:covlm}.

Section \ref{sec:gaussianitynonhat} will show the proof of the normality of the linear spectral statistics. The proof for normality is relatively routine. We will adopt the following technique originated from Tikhomirov \cite{Tikhomirov1981}. The core idea can be simplified as follows. To prove that a sequence of real random variable $R_n$ converges to a Gaussian random variable with mean zero and variance $\sigma^2$, it  suffices to prove that
$$\E e^{it R_n}\rightarrow  e^{-\frac{1}{2} \sigma^{2} t^{2}}. $$

We prove alternatively that  its derivative will behave similarly to that of  the derivative of a characteristic function of a Gaussian distribution
$$i\E R_ne^{it R_n}\rightarrow-\sigma^2 t\E e^{itR_n},$$
in which $R_n$ is a real function constructed from $Tr(G(z))$, from which by $HG=I+zG$ we can find the form $HG$, thus extract the form  $\E hf(h)$. Then the cumulant expansion formula can be applied. In Section \ref{sec:gaussianitynonhat}, we will apply the multivariate version of the above trick to establish the normality.

In Section \ref{sec:tightness}, we establish the tightness of the process $\langle Tr(G(z))\rangle$ via a similar approach, then it follows from \cite{Bai2004} that we can proceed from finite dimensional convergence of  $\{\langle Tr(G(z_s))\rangle\}_{s=1}^t$ to the weak convergence of the linear spectral statistics.

\subsection{Outline of the proof of Theorem \ref{thm:mainBlockWigneremp}}
Recall that in the SBM setting, given the adjacency matrix $(A_{ij})_{i,j\in [n]}$ of a SBM, we may consider two renormalized versions $H^{(A)}$ \eqref{defi:nonhatversion} and $\hat{H}^{(A)}$ \eqref{defi:hatversion}. For simplicity, we will use $H$ and $\hat{H}$ for short when there is no confusion.

Note that when $i\ne j$,
\begin{equation}
	\begin{aligned}
		(\hat{H}-H)_{ij}=&\frac{p_{ij}-\hat{p}_{ij}}{\sqrt{n}}=-\frac{1}{\sqrt{n}}\sum\limits_{\alpha\in C_{\sigma(i)},\beta\in C_{\sigma(j)}}\frac{A_{\alpha\beta}-p_{ij}}{N_{\sigma(i)\sigma(j)}}\\ 
		=&-\frac{1}{N_{\sigma(i)\sigma(j)}}\sum\limits_{\alpha\in C_{\sigma(i)},\beta\in C_{\sigma(j)}}\frac{A_{\alpha\beta}-p_{\alpha\beta}}{\sqrt{n}}=-\frac{1}{N_{\sigma(i)\sigma(j)}}\sum\limits_{\alpha\in C_{\sigma(i)},\beta\in C_{\sigma(j)}}H_{\alpha\beta}.
	\end{aligned}
\end{equation}
Then by concentration inequality we know instantly that $||\hat{H}-H||=o_p(\frac{\log(n)}{\sqrt{n}})$, which implies that the limiting spectral distribution of $\hat{H}$ will be the same as that of $H$. However, this stand-alone bound is not sufficient for identical CLTs. To study the LSS of $\hat{H}$, 	we need to follow a similar process  to the one we use to prove Theorem \ref{thm:mainBlockWigner}.

Further, we investigate on the resolvents $G(z)=(H-z)^{-1}$ and $\hat{G}(z)=(\hat{H}-z)^{-1}.$ Note also that by the resolvent identity, we have
$$\begin{aligned}			
	\hat{G}(z)=&\sum_{k=0}^{m}G(z)[-(\hat{H}-H)G(z)]^k+\hat{G}(z)[-(\hat{H}-H)G(z)]^{m+1}\\ 
	=&G(z)-G(z)(\hat{H}-H)G(z)+G(z)(\hat{H}-H)G(z)(\hat{H}-H)G(z)\\ 
	&-G(z)(\hat{H}-H)G(z)(\hat{H}-H)G(z)(\hat{H}-H)\hat{G}(z).	
\end{aligned}$$

Further, note that by $||\hat{H}-H||=o_p(\frac{\log(n)}{\sqrt{n}})$ and $||G(z)||\le \frac{1}{|\Im(z)|}$, we expect that the higher-order expansion terms would vanish. 

The proof of the Theorem \ref{thm:mainBlockWigneremp} will adopt the same approach as Theorem \ref{thm:mainBlockWigner} per se. However, we will mainly focus on the difference of the resolvents. The details of the proof can be found in Section \ref{sec:pfofhatversion}.

\section{Numerical results}\label{sec:simulation}
\subsection{Experiments on verifying Theorem \ref{thm:mainBlockWigner}}
We test our theorems under the setting of SBM \eqref{eq:sbmsetting} since the renormalized adjacency matrix \eqref{defi:nonhatversion} is naturally a block-Wigner-type matrix. Numerical experiments are conducted for the cases where $\tilde{P}$ in \eqref{eq:sbmsetting} is a matrix with identical diagonal terms $p$ and identical off-diagonal terms $q$. Under this framework, we may let both $p$ and $q$ run through the grid $\{0.1, 0.2,\cdots,0.9\}$ to obtain a total of  $9\times 9=81$ stochastic block models. Given a test function, we can calculate the theoretical values of asymptotic means and asymptotic variances  via Theorem \ref{thm:mainBlockWigner}. In the meantime,  we are also able to generate real empirical data  via Monte Carlo method with $N_r$ repetitions and get empirical means and variances for each model. Then we may compare the theoretical values and the empirical values via the 2D-mesh plots.

Note that for simplicity of presentation, we will compare $L_n(f)=\sum_{i=1}^nf(\lambda_i)$ instead of the truncated version $G_n(f)=\sum_{i=1}^nf(\lambda_i)-n\int fd\mu_{\infty}$.

\begin{example}\label{simu:ch2a}
	The following parameters are used:
	
	$K=3.$  $\alpha=[0.25,0.25,0.5]$. $N=800$. $N_r=800$. $\tilde{P}=(p_i-q_j)I+q_j11^T,$ where $p_i=\frac{i}{10}$, $q_j=\frac{j}{10}$, $i,j\in[9]$.
	$f=x^2.$
\end{example}	

%
%
%
%

%

\begin{table}[h]
	\centering
	\begin{tabular}{m{1.1cm} | m{4cm} | m{4cm}| m{1.6cm}}
		& Asymptotic & Empirical & maximal absolute difference\\
		\hline
		\small	Mean of $L_n(x^2)$ & \begin{subfigure}[c]{1\linewidth}
			\label{Theoretical mean for x^2 l}
			\includegraphics[width=\linewidth]{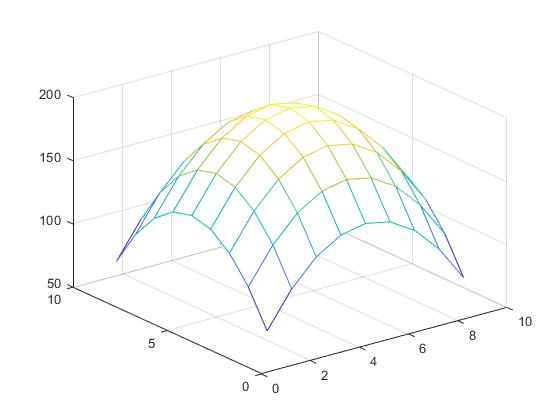}
		\end{subfigure}& \begin{subfigure}[c]{1\linewidth}
			\includegraphics[width=\linewidth]{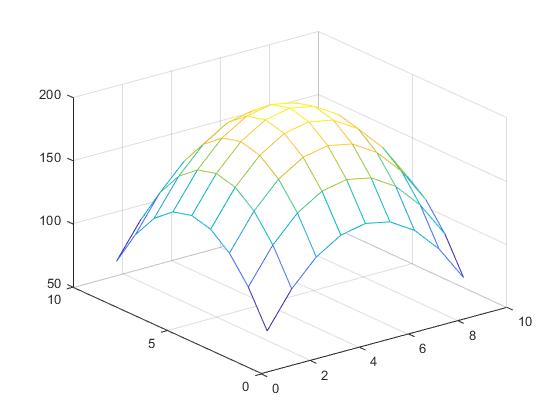}
			\label{Numerical mean for x^2 l}
		\end{subfigure}	 &  0.0195\\
		\hline
		\small	Variance of $L_n(x^2)$ & 	\begin{subfigure}[c]{1\linewidth}
			
			\includegraphics[width=\linewidth]{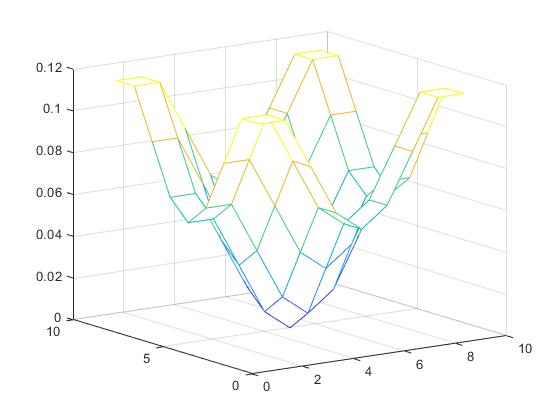}
			\label{Theoretical var for x^2 l}
		\end{subfigure} & 	\begin{subfigure}[c]{1\linewidth}
			\includegraphics[width=\linewidth]{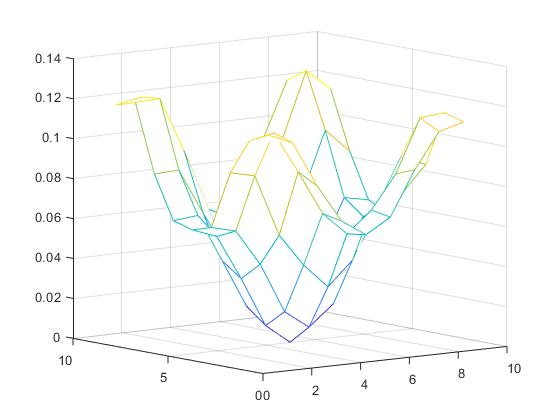}
			\label{Numerical var for x^2 l}
		\end{subfigure} &0.0112\\ 
		
		\hline
	\end{tabular}	
	\caption{Comparison of the asymptotic mean, variance and their empirical values obtained by Monte Carlo for the LSS $L_n(x^2)$. Empirical values use $800$ repetitions.}
	\label{table:simuch2a}
\end{table}

%
%

One can see from Table \ref{table:simuch2a} that	we obtain a quite good match between theoretical and empirical means and variances.

Next, we consider 9 SBMs out of the 81 in Example \ref{simu:ch2a} and display in Figure \ref{qqplot:simuch2a} the normal qqplots of the empirical LSS $L_n(f)$ after normalization $\frac{L_n(f)-Mean(L_n(f))}{Std(L_n(f))}$. These qqplots empirically confirm the asymptotic normality of the LSS.

\begin{figure}
	\begin{center}
		\includegraphics[scale=0.23]{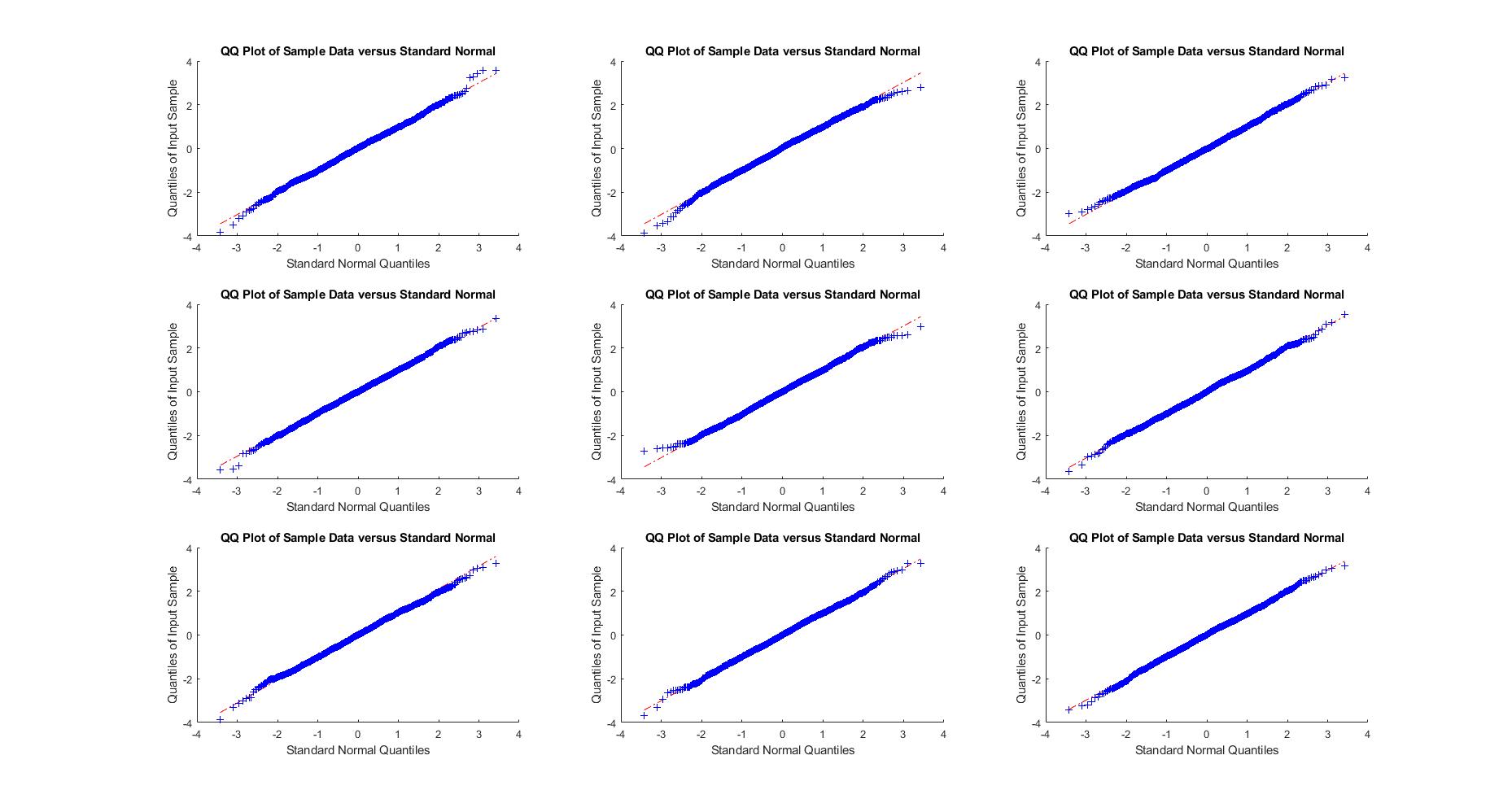}
		\caption{qqplots for normalized LSS of 9 different SBMs, test function $x^2$   }
		\label{qqplot:simuch2a}
	\end{center}
\end{figure}

\begin{example}
	Same setting except the test function $f$ is $x^4$.	Simulation results are shown in Table \ref{simu:ch2b} and Figure \ref{qqplot:simuch2b}. The conclusion is similar to that of Example \ref{simu:ch2a}.
\end{example}	
\begin{table}
	\centering
	\begin{tabular}{m{1.1cm} | m{4cm} | m{4cm}| m{1.6cm}}
		& Asymptotic & Empirical & maximal absolute difference\\
		\hline
		\small Mean of	$L_n(x^4)$ & \begin{subfigure}[c]{1\linewidth}
			\label{Theoretical mean for x^4 lb}
			\includegraphics[width=\linewidth]{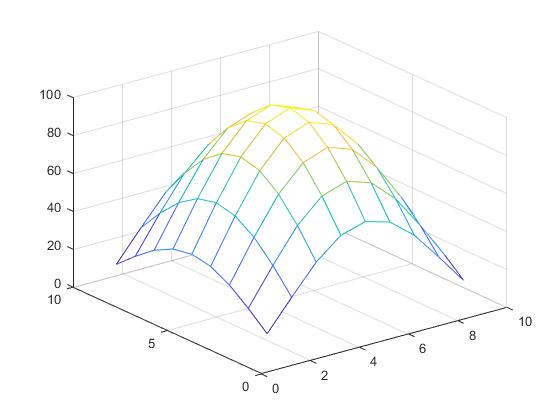}
		\end{subfigure}& \begin{subfigure}[c]{1\linewidth}
			\includegraphics[width=\linewidth]{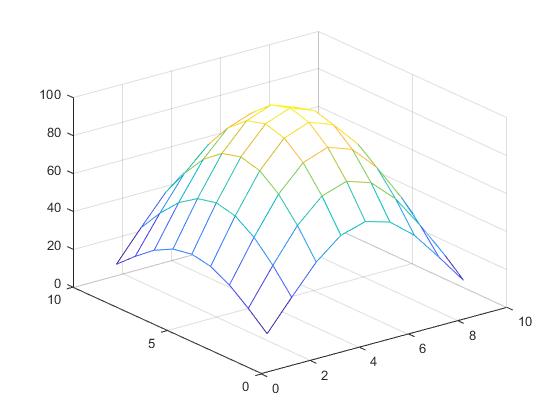}
			\label{Numerical mean for x^4 lb}
		\end{subfigure}	 &  0.3100\\
		\hline
		\small Variance of $L_n(x^4)$  & 	\begin{subfigure}[c]{1\linewidth}
			
			\includegraphics[width=\linewidth]{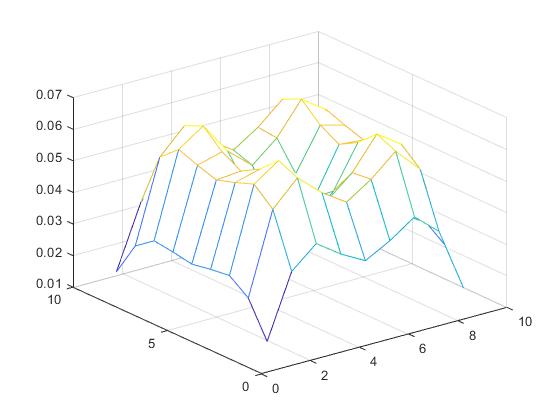}
			\label{Theoretical var for x^4 lb}
		\end{subfigure} & 	\begin{subfigure}[c]{1\linewidth}
			\includegraphics[width=\linewidth]{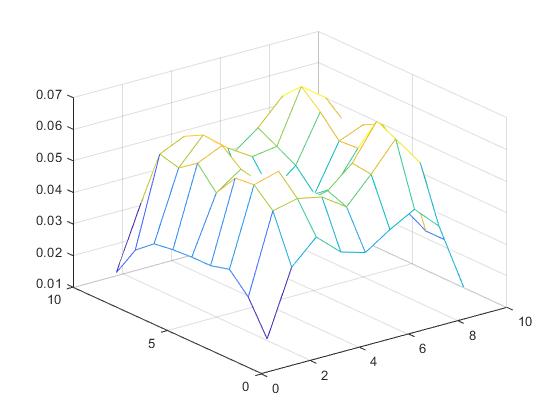}
			\label{Numerical var for x^4 lb}
		\end{subfigure} &0.0065\\ 
		
		\hline
	\end{tabular}	
	\caption{Comparison of the asymptotic mean, variance and their empirical values obtained by Monte Carlo for the LSS  $L_n(x^4)$. Empirical values use $800$ repetitions.}
	\label{simu:ch2b}
\end{table}
%
%
%

\begin{figure}
	\begin{center}
		\includegraphics[scale=0.23]{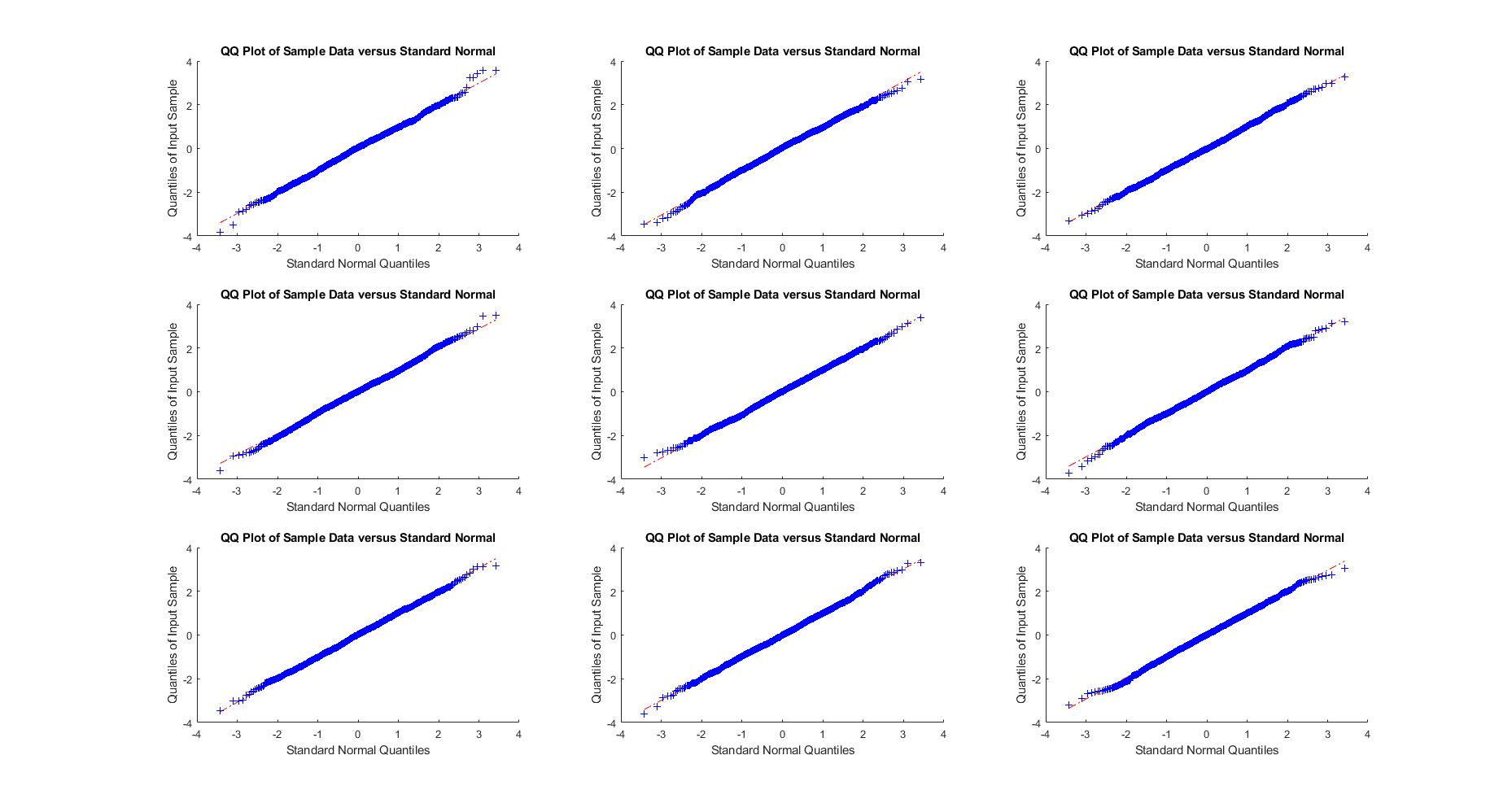}
		\caption{qqplots for normalized LSS of 9 different SBMs, test function $x^4$  }
		\label{qqplot:simuch2b}
	\end{center}
\end{figure}

\subsection{Experiments on the data-driven matrix $\hat{H}$}

We have also conducted numerical experiments for the data-driven matrix $\hat{H}$. The simulation set-up much follows the one used in Section \ref{sec:applicationtosbm}. The main purpose is to verify whether the limiting distributions of  linear spectral statistics of $\hat{H}$ would be the same as those of $H$.

Towards this end, we display relative qqplots of linear spectral statistics from $H$ and $\hat{H}$, respectively. Under distributional identity, qqplots would coincide with the identity line $y=x$.

\begin{example}\label{ex:hatsimulation}
	The SBM parameters are as follows:
	
	$K=6$, $\alpha=[0.1,0.15,0.2,0.25,0.1,0.2]$, $N=1000$, $N_r=1600$, $\tilde{P}=(p_i-q_j)I+q_j11^T,$ where $p_i=\frac{2i-1}{10}$, $q_j=\frac{2j-1}{10}$.
	Test functions are $f_1=x^4,$  $f_2=x^5,$ $f_3=exp(x).$
\end{example}

\begin{figure}
	\begin{center}
		\includegraphics[scale=0.235]{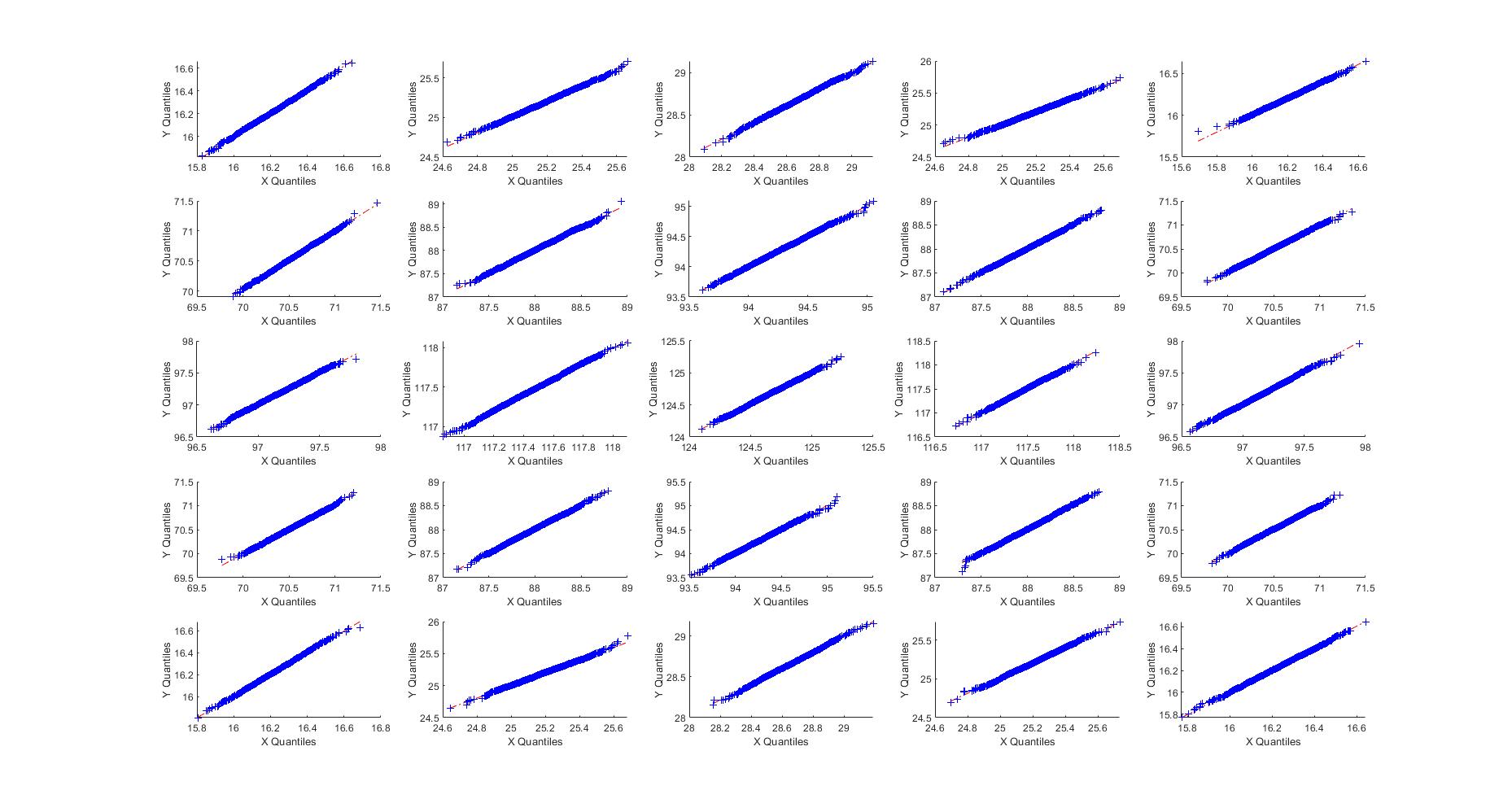}
		\caption{qqplots  of  $	L_n(x^4)[H]-L_n(x^4)[\hat{H}]$  }
		\label{qqplot:hatx4}
	\end{center}
\end{figure}

\begin{figure}
	\begin{center}
		\includegraphics[scale=0.235]{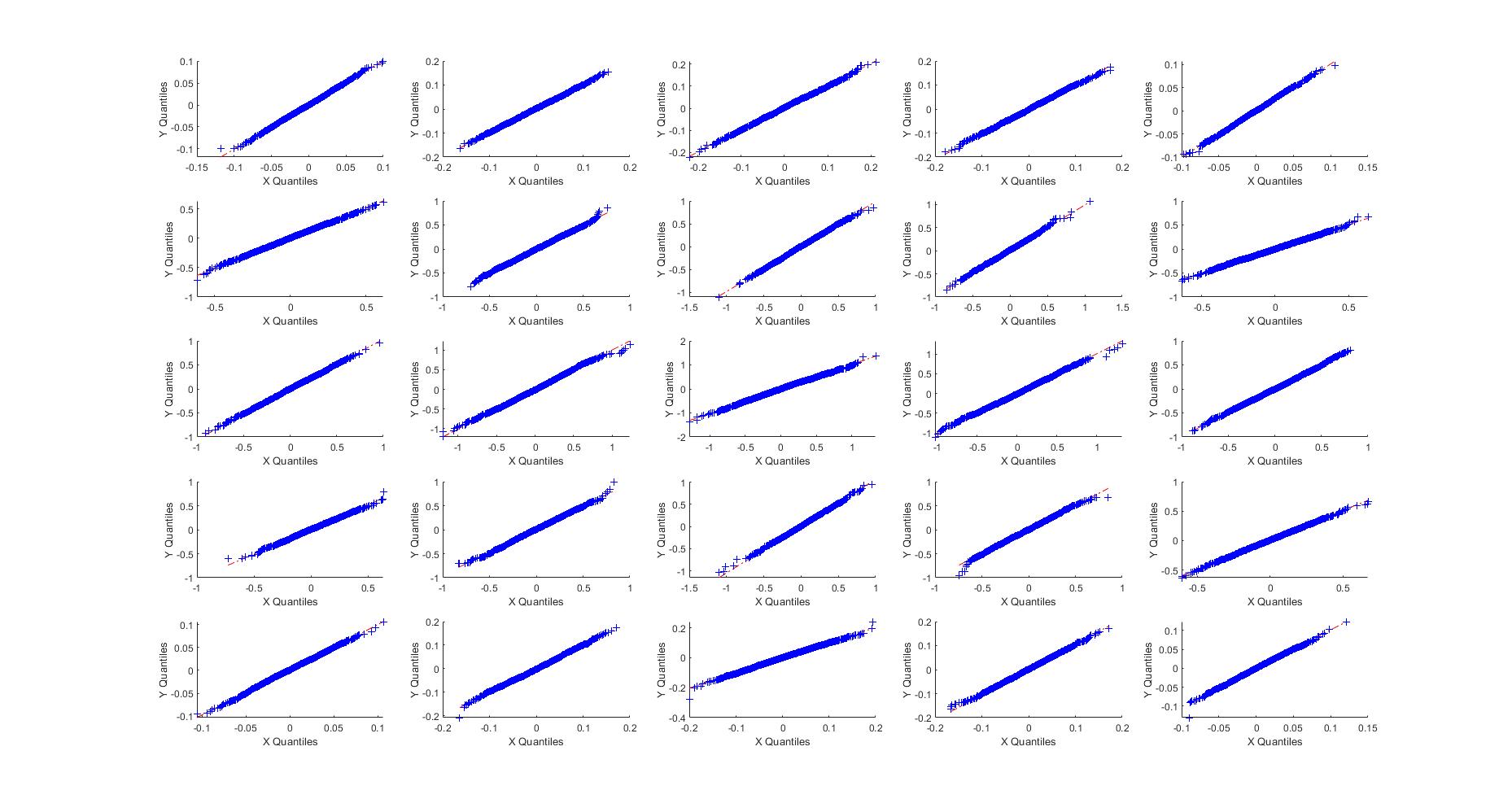}
		\caption{qqplots  of  $L_n(x^5)[H]-L_n(x^5)[\hat{H}]$  }
		\label{qqplot:hatx5}
	\end{center}
\end{figure}

\begin{figure}
	\begin{center}
		\includegraphics[scale=0.235]{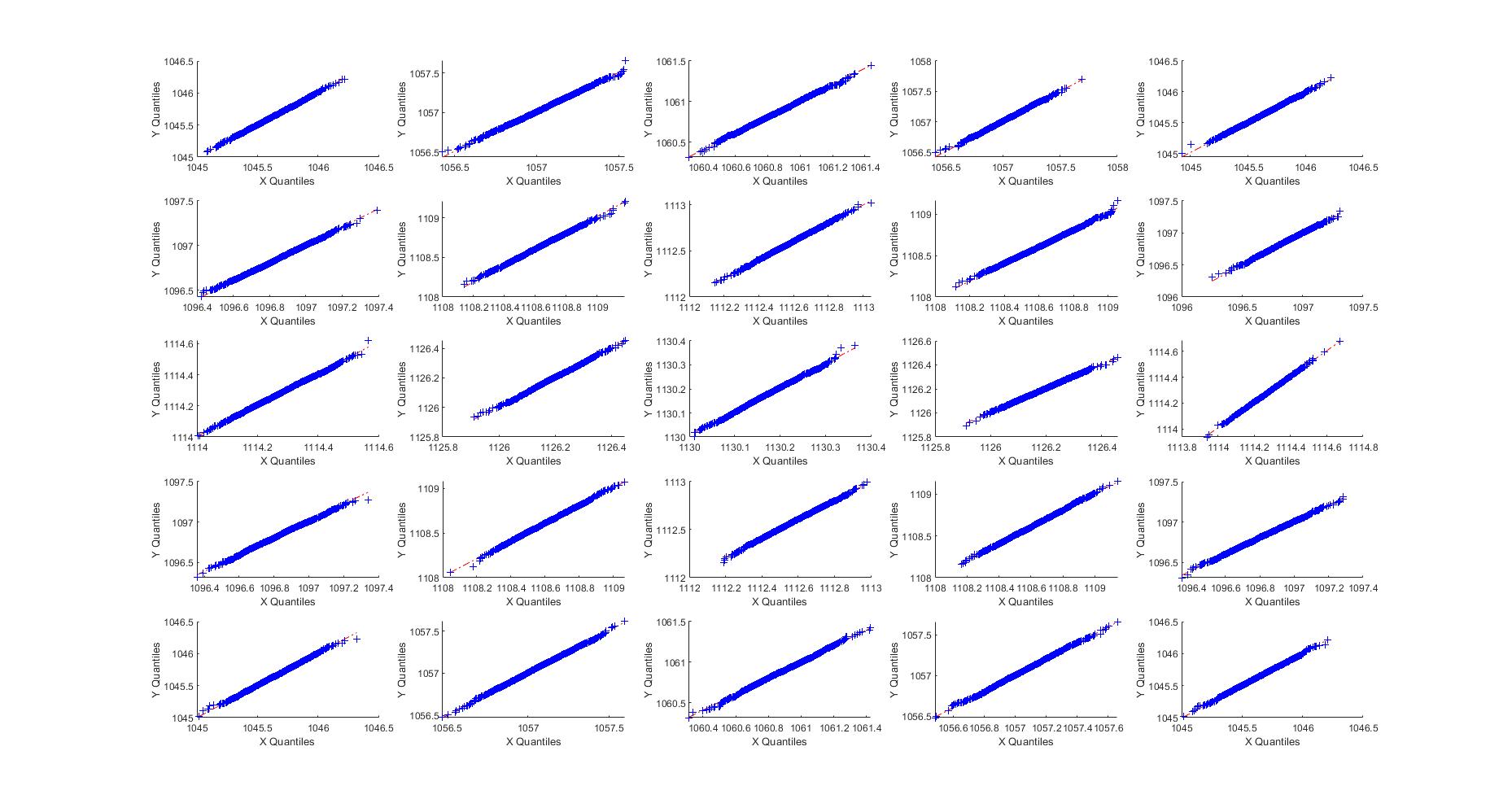}
		\caption{qqplots  of  $L_n(exp(x))[H]-L_n(exp(x))[\hat{H}]$  }
		\label{qqplot:hatexpx}
	\end{center}
\end{figure}

The empirical qqplots are given in Figure \ref{qqplot:hatx4} \ref{qqplot:hatx5} \ref{qqplot:hatexpx}. It can be seen that these qqplots are basically on the line $y=x$, which gives a good empirical confirmation of Theorem \ref{thm:mainBlockWigneremp}.

\section{Conclusion}

In this paper, we consider two applicable renormalizations $\left(\frac{A_{ij}-p_{ij}}{\sqrt{n}}\right)$ and $\left(\frac{A_{ij}-\hat{p}_{ij}}{\sqrt{n}}\right)$ of adjacency matrices of the stochastic block models. The CLTs of linear spectral statistics for both renormalizations are derived. The situations are fundamentally different from the existing literature in the sense that $\left(\frac{A_{ij}-p_{ij}}{\sqrt{n}}\right)$ induces a block-Wigner-type matrix whose LSD is no longer guaranteed to be the semicircle law but governed by the so-called quadratic vector equations introduced in \cite{Ajanki}. And the CLT for LSS also requires finer tools from the local law estimations. Meanwhile, $\left(\frac{A_{ij}-\hat{p}_{ij}}{\sqrt{n}}\right)$ is further perturbed by a low rank yet correlated structure, whose non-decaying correlations among the entries increase the difficulty of analysis.

We discuss several directions for future research. First, the CLTs introduced here are still in the dense regime of the stochastic block model. While \cite{Kevin2018} provides a more subtle analysis of the local law for the Erd\H{o}s-R\'enyi model in the sparser regime, it makes a local law for the sparse stochastic block model possible. Thus, the CLT for LSS of SBM in the sparse regime could be doable.

Second, a natural question is that for more general Wigner-type matrices, for instance, when the patterns explore more complex structures, can we get some CLTs or non-CLTs?   For instance, if the number of communities for the SBM is growing along with $n$ or the random graph model is defined via a graphon approach \cite{Airoldi2013}\cite{Yizhe2019}, then how will the linear spectral statistics  behave?

\newpage
\appendix
\section{Detailed calculations for the proof  of Theorem \ref{thm:mainBlockWigner}}\label{sec:detailedcalculation}
In this section, we will show the details of the calculation of the mean function in Section \ref{sec:mean1}-\ref{sec:mean2}, covariance function in Section \ref{sec:covlm}-\ref{sec:covend}, proof of normality in Section \ref{sec:gaussianitynonhat}, and tightness of the process in Section \ref{sec:tightness} for the block-Wigner-type matrices $H$. 
%
%
%
%

\begin{remark}
	 Corollary~\ref{Cor:QVE} will be extensively used in our proof. Since $\varepsilon>0$ is arbitrarily small and essentially $n^{\varepsilon}$ can be substituted by $\log(n)^k$ for some large enough $k$ in these large-deviation bounds. Sometimes we will use $n^{\varepsilon}$ for simplicity when it is actually  $n^{k_0\varepsilon}$ for some  positive integer $k_0$ which is independent of $n$.
\end{remark}

Recall that in Section \ref{sec:outlineofpfblockWigner}, we decompose the mean function $Tr G(z)$ into several components.
Starting from  $I_{1,1}$ in \eqref{eq:meandecomposition}, we need to calculate $\E \underline{T_lG(z)T_mG(z)}$ to order 1. 

\subsection{System of equations for $\E \underline{G(z)T_lG(z)T_m}$}\label{sec:mean1}

\begin{proof} [Proof of Lemma \ref{eq:trtgtg}]
	By the identity $G(z)=\frac{1}{z}(HG(z)-I)$, we have
	
	\begin{equation*}
		\begin{aligned}
			\E \underline{G(z)T_lG(z)T_m} &= \frac { 1 } { z } \E \underline { H G T_l G T_m } - \frac { 1 } { z } \underline {T_l G T_m}=\frac { 1 } { z } \E \underline{ H G T_l G T_m } - \delta_{lm}\frac { 1 } { z }\E \underline {T_l G}.
		\end{aligned}
	\end{equation*}
	
	Then by the cumulant expansion formula,
	\begin{align*}
		&\E \underline{ H G T_l G T_m } =\frac{1}{n} \E\sum _ { i j } H _ {i j } ( G T_l G T_m ) _ { ij }\\
		=&  \sum _ { i j } \frac{\kappa_{ij}^{(2)}}{n^2}\E \frac { \partial ( G T_l G T_m ) _ { j i } } { \partial  H_{ij}  }+ \sum _ { i j } \frac{\kappa_{ij}^{(3)}}{2!n^{5/2}}\E \frac { \partial^2 ( G T_l G T_m) _ { j i } } { \partial  H_{ij}^2  }\\ 
		&+\sum _ { i j } \frac{\kappa_{ij}^{(4)}}{3!n^{3}}\E \frac { \partial^3 ( G T_l G T_m ) _ { j i } } { \partial  H_{ij}^3  }+\varepsilon_{GTGT,4},
	\end{align*}
	where by the cumulant expansion and the trivial bound, the error term satisfies 
	$|\varepsilon_{GTGT,4}| \leq C \sum_{i,j}\sup _{t}\left|f^{(3+1)}(t)\right| E\left[|H_{ij}|^{3+2}\right]=O(\frac{1}{\sqrt{n}})$ thus minor.

	In the meantime, note that when we take derivatives $\frac{\partial^k (GT_lGT_m)_{ji}}{\partial^k H_{ij}}$,  the terms with the largest order of magnitude should be the ones with the form $(\cdot)_{ii}(\cdot)_{jj}\times(\cdot)$, which  will be of order 1 since  $||G||\le \frac{1}{\Im z}$ and $||T_l||=1,\forall l\in [K]$, so 
	
	\begin{equation}\label{eq:trivialboundforsumofo1terms}
		\begin{aligned}&\sum _ { i j } \frac{\kappa_{ij}^{(3)}}{2!n^{5/2}}\E \frac { \partial^2 ( G T_l G T_m ) _ { j i } } { \partial  H_{ij}^2  }=O(\frac{1}{\sqrt{n}}).\\
			&\sum _ { i j } \frac{\kappa_{ij}^{(4)}}{3!n^{3}}\E \frac { \partial^2 ( G T_l G T_m ) _ { j i } } { \partial  H_{ij}^2  }=O(\frac{1}{n}).\\
		\end{aligned}
	\end{equation}
	It follows that
	\begin{align*}
		&z\E \underline{GT_lGT_m}+\delta_{lm}\E \underline{T_lG}=\E \underline{H G T_l G T_m} = \frac{1}{n}\E\sum _ { i j } H _ {i j } ( G T_l G T_m ) _ { ij }\\ =& \frac{1}{n}\sum _ { i j }\frac{\kappa_ { i j }^{(2)}}{n}\E\frac { \partial ( G T_l G T_m ) _ { j i } } { \partial  H_{ij}  }+O(\frac{1}{\sqrt{n}})\\  
		=&-\frac{1}{n^2}\sum_{ij}\kappa_{ij}^{(2)}\E[  G_{ji}(GT_lGT_m)_{ji}+G_{jj}(GT_lGT_m)_{ii} \\ 
		& +(GT_lG)_{ji}(GT_m)_{ji}+ (GT_lG)_{jj}(GT_m)_{ii}]+O(\frac{1}{\sqrt{n}})\\ 
		=&-\frac{1}{n^2}\sum_{k_1,k_2=1}^K\sum_{i\in C_{k_1},j\in C_{k_2}}Q^{(2)}_{k_1k_2}\E(G_{jj}(GT_lGT_m)_{ii} + (GT_lG)_{jj}(GT_m)_{ii})+O(\frac{n^{\varepsilon}}{\sqrt{n}})\\ 
		=&-\sum_{k=1}^KQ^{(2)}_{mk}(\alpha_{k}M_{k}(z)\E \underline{GT_lGT_m} +\alpha_mM_m(z) \E \underline{GT_lGT_{k}})+O(\frac{n^{\varepsilon}}{\sqrt{n}}).
	\end{align*}
	If we adopt the notation 
	$$X_{m}^{(l)}:=\E \underline{GT_lGT_m},$$
	then we may rewrite the above system of equations as 
	\begin{equation}
		\begin{aligned}
			(1+\frac{1}{z}\sum_{k=1}^KQ^{(2)}_{mk}\alpha_kM_k+\frac{1}{z}\alpha_mM_mQ^{(2)}_{mm})X_m^{(l)}+\frac{1}{z}\alpha_mM_m\sum_{k=1,k\ne m}^KQ^{(2)}_{mk}X_k^{(l)}=-\delta_{ml}\frac{1}{z}\alpha_l M_l.
		\end{aligned}
	\end{equation} 
	Now we have the  system of equations (\ref{eq:GTGTl}) for vector $[\E \underline{GT_lGT_1},\cdots, \E \underline{GT_lGT_K}]$ and the  system of equations (\ref{eq:MGTGT}) for matrix $\big(\E \underline{GT_lGT_k} \big)_{l,k=1}^K$.

\end{proof}    
\begin{remark}
	Further from the QVE (\ref{eq:QVE}), for $z \in \mathbb{C}_+$ and sufficiently bounded away from the spectrum of $H$, we have
	\begin{equation*}
		-\frac{1}{M_{l}(z)}=z+\sum_{m=1}^{K} Q^{(2)}_{lm} \alpha_mM_{m}(z), \quad \text { for all } \quad l=1, \ldots, K,\quad 
	\end{equation*}
	one can see that for different $l$'s. The coefficient matrices are the same, after simplification, we have that the matrix $M_{GTGT}(z)=\big( \underline{G(z)T_lG(z)T_m}\big)_{l,m=1}^K$ adopts this simple explicit form
	\begin{equation}
		M_{GTGT}(z)=-\big(Q^{(2)}-Diag ([\frac{1}{\alpha_1M_1^2(z)},\cdots,\frac{1}{\alpha_KM_K^2(z)}]^{\top})\big)^{-1},
	\end{equation}
	which is symmetric and in accordance with the tracial property which leads to $$Tr(GT_lGT_m)=Tr(GT_mGT_l).$$
	
	One may be concerned about the singularity problem. Simply note that $|M_j(z)|\le \frac{1}{|\Im z|}, j\in[K]$. Then when $|\Im z|^2\ge \max_{k\in [K]} \alpha_k \sum_{j=1}^KQ^{(2)}_{kj},$ the matrix $$\big(Q^{(2)}-Diag ([\frac{1}{\alpha_1M_1^2(z)},\cdots,\frac{1}{\alpha_KM_K^2(z)}]^{\top})\big)$$ becomes diagonal dominant, thus non-singular. Similar things happen when we are near the real axis but also sufficiently bounded  away from the edge.	 Then we can ensure the existence and uniqueness of the solutions of our systems of equations. All  we have to pay is to select a larger contour when we apply the Cauchy integral trick to proceed from the trace of the resolvent to the linear spectral statistics. Due to the homogeneity of the coefficient, similar arguments hold for other systems of equations of interest. 
	
	Specifically, we introduce the parameter $\varepsilon_0$, s.t. for $z\in \mathbb{C}\backslash B_{\varepsilon}(\sigma(\hat{H}))$, the existence and uniqueness of the solution are guaranteed by the above mechanism.
\end{remark}

\subsection{Leading term for $\frac{1}{n}\E\sum_{i,j=1}^n\kappa_{ij}^{(2)}G_{ii}G_{jj}$ and system of equations for $\E Tr(T_lG)$}\label{sec:mean2}

The next task is to identify the leading term of {$\frac{1}{n}\E\sum_{i,j=1}^n\kappa_{ij}^{(2)}G_{ii}G_{jj}$.  Note that we need to calculate these terms up to the order $1$. The
	problem arises that  the trivial upper bound \(|G_{jj}(z)-m_{j}(z)|\prec \frac{1}{\sqrt{n}}\) is far from enough since it only yields
	\(\frac{1}{n}\sum_{i,j=1}^n\kappa_{ij}^{(2)}|G_{ii}G_{jj}-m_im_j|\prec n^{1/2}\).

	Recall the decomposition in \eqref{eq:meandecomposition2}, we further write
	\begin{equation}
		\begin{aligned}
		I_{1,2}	=&\frac{1}{n}\E\sum_{k_1,k_2=1}^{K}Q^{(2)}_{k_1k_2}\sum_{i\in C_{k_1},j\in C_{k_2}}G_{ii}G_{jj}-\frac{1}{n}\E\sum_{k=1}^{K}Q^{(2)}_{kk}\sum_{i\in C_k}G_{ll}^2 \\
			=&I_{1,2,1}+I_{1,2,2}.
		\end{aligned}
	\end{equation}
	where 
	\begin{equation}
		\begin{aligned}
			I_{1,2,1}=&\frac{1}{n}\E\sum_{k_1,k_2=1}^{K}Q^{(2)}_{k_1k_2}\sum_{i\in C_{k_1},j\in C_{k_2}}G_{ii}G_{jj}=n\E\sum_{k_1,k_2=1}^{K}Q^{(2)}_{k_1k_2}\underline{T_{k_1}G}\ \underline{T_{k_2}G}.\\ 
		\end{aligned}
	\end{equation}
	Note that we cannot calculate $I_{1,2,1}$ to the desired order directly.
	
	Simply notice that by local law, we have 
	\begin{equation}
		\begin{aligned}
			\E[|Tr(T_{k}G)-\E Tr(T_{k}G)|^2]\le{n^{\varepsilon}}.
		\end{aligned}
	\end{equation}
	By Cauchy-Schwarz inequality it's easy to see that 
	\begin{equation}
		\begin{aligned}
			\E[|(Tr(T_{k_1}G)-\E Tr(T_{k_1}G))(Tr(T_{k_2}G)-\E Tr(T_{k_2}G))|]\le{n^{\varepsilon}}.
		\end{aligned}
	\end{equation}
	Then we have
	\begin{equation}
		\begin{aligned}
			I_{1,2,1}=&{n}\E\sum_{k_1,k_2=1}^{K}Q^{(2)}_{k_1k_2}\underline{T_{k_1}G}\ \underline{T_{k_2}G}={n}\sum_{k_1,k_2=1}^{K}Q^{(2)}_{k_1k_2}\E \underline{T_{k_1}G}\E \underline{T_{k_2}G}+O(\frac{n^{\varepsilon}}{n}).\\ 
		\end{aligned}
	\end{equation}
	And for $I_{1,2,2}$ we can get the simple formulation
	
	\begin{equation}
		\begin{aligned}
			I_{1,2,2}=&-\frac{1}{n}\E\sum_{k=1}^KQ^{(2)}_{kk}\sum_{i\in C_k}G_{ii}^2=-\sum_{k=1}^KQ^{(2)}_{kk}\alpha_kM_k^2+O(\frac{n^{\varepsilon}}{\sqrt{n}}).
		\end{aligned}
	\end{equation}
	In other words, again we have obtained a function of $(\E (Tr(T_{k_1}G)), \E(Tr(T_{k_2}G)))$ on the RHS, note that the leading order terms of $(\E (Tr(T_{k_1}G)), \E(Tr(T_{k_2}G)))$, which are of order $n$, are known. So this motivates us to derive a system of equations for the subleading order terms of $\{\E Tr(T_kG)\}_{k=1}^K$, which are of order 1.

	\begin{proof}[Proof of Lemma \ref{eq:trtgsubleading}]
		By the cumulant expansion formula, we have the following equality for $\E Tr(T_kG)$:		
		\begin{equation*}
			\begin{aligned}
				\E Tr(T_kG)=&-\frac{\alpha_kn}{z} + \E\frac{1}{z}Tr(HGT_k)=-\frac{\alpha_kn}{z}+ \E\frac{1}{z}\sum_{ij}(H_{ij}e_j'GT_ke_i)\\  
				=&-\frac{\alpha_k n}{z}
				- \E\frac{1}{z}\sum_{ij}\frac{\kappa_{ij}^{(2)}}{n}[G_{jj}(GT_k)_{ii}+G_{ji}(GT_k)_{ji}]+\E\frac{1}{z}\sum_{ij}\frac{\kappa_{ij}^{(3)}}{2!n^{3/2}}\frac{\partial^2 e_j'GT_ke_i}{\partial H_{ij}^2}\\ 
				&+\E\frac{1}{z}\sum_{ij}\frac{\kappa_{ij}^{(4)}}{3!n^2}\frac{\partial^3 e_j'GT_ke_i}{\partial H_{ij}^3}+\varepsilon_{\tilde{I}_{1,5}}\\  
				=&-\frac{\alpha_k n}{z}- \E\frac{1}{z}\sum_{k_1,k_2=1}^K\sum_{i\in C_{k_1},j\in C_{k_2},i\ne j}\frac{\kappa_{ij}^{(2)}}{n}[G_{jj}(GT_k)_{ii}+G_{ji}(GT_k)_{ji}]\\ 
				&+\E\frac{1}{z}\sum_{ij}\frac{\kappa_{ij}^{(4)}}{3!n^2}\frac{\partial^3 e_j'GT_ke_i}{\partial H_{ij}^3}+\varepsilon_{\tilde{I}_1}\\ 
				=&-\frac{\alpha_k n}{z}-\tilde{I}_{1,1}-\tilde{I}_{1,2}-\tilde{I}_{1,4}+\varepsilon_{\tilde{I}_{1}},
			\end{aligned}
		\end{equation*}
		where $\varepsilon_{\tilde{I}_{1}}$ consists of higher-order expansions of the formula 
		\begin{equation*}
			\begin{aligned}
				\varepsilon_{\tilde{I}_1,3}&=\frac{1}{z}\E\sum_{i,j}\frac{\kappa_{ij}^{(3)}}{2!n^{3/2}}2[G_{jj}G_{ij}(GT_k)_{ii}+G_{jj}G_{ii}(GT_k)_{ji}+G_{ji}G_{jj}(GT_k)_{ii}+G_{ji}G_{ji}(GT_k)_{ji}],
			\end{aligned}
		\end{equation*}

		$$|	\varepsilon_{\tilde{I}_1,5}|\le \sum_{i,j}C\sup_t|f^{(p+1)}(t)|\E[|t|^{p+2}]=O(\frac{1}{\sqrt{n}}).$$
		
		Then it suffices to show that 
		$$	\varepsilon_{\tilde{I}_1,3}^{l,m}=\frac{1}{z}\E\sum_{i\in C_l,j\in C_m}\frac{\kappa_{ij}^{(3)}}{2!n^{3/2}}2[G_{jj}G_{ij}(GT_k)_{ii}+G_{jj}G_{ii}(GT_k)_{ji}+G_{ji}G_{jj}(GT_k)_{ii}+G_{ji}G_{ji}(GT_k)_{ji}]$$ 
		are minor. Let $a=[G_{11},\ldots,G_{nn}]$,  $b_k=[(T_kG)_{11},\ldots,(T_kG)_{nn}]$.
		\begin{equation*}
			\begin{aligned}
				&z\varepsilon_{\tilde{I}_1,3}^{l,m}\\ 
				=&\E\sum_{i\in C_l,j\in C_m}\frac{\kappa_{ij}^{(3)}}{2!n^{3/2}}2[G_{jj}G_{ij}(GT_k)_{ii}+G_{jj}G_{ii}(GT_k)_{ji}+G_{ji}G_{jj}(GT_k)_{ii}+G_{ji}G_{ji}(GT_k)_{ji}]\\ 
				=&\frac{Q^{(3)}_{lm}}{n^{3/2}}(aGb_k^{\top}+a(GT_k)a^{\top}+b_kGa^{\top})+O(\frac{1}{\sqrt{n}})=O(\frac{1}{\sqrt{n}}).
			\end{aligned}
		\end{equation*}
		\begin{equation}
			\begin{aligned}
				\tilde{I}_{1,1}=& \E\frac{1}{z}\sum_{k_1,k_2=1}^K\sum_{i\in C_{k_1},j\in C_{k_2},i\ne j}\frac{\kappa_{ij}^{(2)}}{n}G_{jj}(GT_k)_{ii}\\ 
				=& \E\frac{1}{z}\sum_{k_2=1}^K\sum_{i\in C_{k},j\in C_{k_2}}\frac{Q^{(2)}_{k_1k_2}}{n}Tr(T_{k_2}G)Tr(GT_k)-\E\frac{1}{z}\sum_{i\in C_k}\frac{Q^{(2)}_{kk}}{n}G_{ii}^2\\ 
				=&\frac{1}{z}\sum_{k_2=1}^K\frac{Q^{(2)}_{k_1k_2}}{n}\E Tr(T_{k_2}G)\E Tr(GT_k)-\E\frac{1}{z}\frac{Q^{(2)}_{kk}}{n}\alpha_k n M_{k}^2\\ &+\frac{1}{z}\sum_{k_2=1}^K\frac{Q^{(2)}_{k_1k_2}}{n}[\E Tr(T_{k_2}G) Tr(GT_k)-\E Tr(T_{k_2}G) \E Tr(GT_k)]\\ 
				&-\E\frac{1}{z}\sum_{i\in C_k}\frac{Q^{(2)}_{kk}}{n}[G_{ii}^2-M_k^2(z)].\\ 
			\end{aligned}
		\end{equation}
		
		One may notice that similar to the cases above, we have 
		\begin{align*}
			&|\E Tr(T_{k_2}G) Tr(GT_k)-\E Tr(T_{k_2}G) \E Tr(GT_k)|\\ 
			&\le |\E [Tr(T_{k_2}G)|-\E Tr(T_{k_2}G)][Tr(GT_k)-\E Tr(GT_k)]|\le  n^{2\varepsilon} ,
		\end{align*}
		and
		\begin{align*}
			\E|G_{ii}^2(z)-M_{k}^2(z)|=\E |G_{ii}(z)-M_{k}(z)||G_{ii}(z)+M_{k}(z)|\le \frac{n^{\varepsilon}}{\sqrt{n\Im(z)}}\frac{2}{Im(z)},\forall i\in C_k,
		\end{align*}
		thus, \begin{equation}
			\tilde{I}_{1,1}=\frac{1}{z}\sum_{k_2=1}^K\frac{Q^{(2)}_{k_1k_2}}{n}\E Tr(T_{k_2}G)\E Tr(GT_k)-\E\frac{1}{z}Q^{(2)}_{kk}\alpha_k  M_{k}^2+o(1).
		\end{equation}
		
		Similarly, we can proceed to $\tilde{I}_{1,2}$,
		\begin{equation}
			\begin{aligned}
				\tilde{I}_{1,2}=& \E\frac{1}{z}\sum_{k_1,k_2=1}^K\sum_{i\in C_{k_1},j\in C_{k_2},i\ne j}\frac{\kappa_{ij}^{(2)}}{n}G_{ji}(GT_k)_{ji}\\
				=&\E\frac{1}{z}\sum_{k_1,k_2=1}^K\sum_{i\in C_{k_1},j\in C_{k_2}}\frac{Q^{(2)}_{k_1k_2}}{n}G_{ji}(GT_k)_{ji}-\E\frac{1}{z}\frac{Q^{(2)}_{kk}}{n}\sum_{i\in C_k}G_{ii}(GT_k)_{ii}\\ 
				=&\frac{1}{z}\sum_{k_2=1}^K{Q^{(2)}_{kk_2}}\E \underline{G	T_{k_2}GT_k}-\E\frac{1}{z}Q^{(2)}_{kk}\alpha_kM_k^2+o(1). 
			\end{aligned}
		\end{equation}
		
		\begin{equation}
			\begin{aligned}
				\tilde{I}_{1,4}&=\E\frac{1}{z}\sum_{i,j}\frac{\kappa_{ij}^{(4)}}{3!n^2}\frac{\partial^3 e_j'GT_ke_i}{\partial H_{ij}^3}=\E\frac{1}{z}\sum_{i\in C_k,j,i\ne j}\frac{\kappa_{ij}^{(4)}}{n^2}G_{ii}^2G_{jj}^2+O(\frac{n^{\varepsilon}}{\sqrt{n}})\\ 
				&=\E\frac{1}{z}\sum_{l=1}^K\sum_{i\in C_k,j\in C_l,i\ne j}\frac{Q_{kl}^{(4)}}{n^2}M_k^2M_l^2+O(\frac{n^{\varepsilon}}{\sqrt{n}})=\frac{1}{z}\sum_{l=1}^K{Q_{kl}^{(4)}}\alpha_k\alpha_lM_k^2M_l^2+O(\frac{n^{\varepsilon}}{\sqrt{n}}).\\ 
			\end{aligned}	
		\end{equation}
		
		Then we may derive the system of equation for $Y_k=\E Tr(T_kG)-\alpha_knM_k$,
		\begin{equation}
			\begin{aligned}
				\alpha_knM_k+Y_k=&-\frac{\alpha_kn}{z}-\frac{1}{z}\sum_{k_2=1}^KQ^{(2)}_{kk_2}\frac{(\alpha_{k_2}nM_{k_2}+Y_{k_2})(\alpha_knM_k+Y_k)}{n}\\ 
				&-\frac{1}{z}\sum_{k_2=1}^K{Q^{(2)}_{kk_2}}\E \underline{GT_{k_2}GT_k}+\frac{2Q^{(2)}_{kk}\alpha_kM_k^2}{z}-\frac{1}{z}\sum_{l=1}^K{Q_{kl}^{(4)}}\alpha_k\alpha_lM_k^2M_l^2.
			\end{aligned}
		\end{equation}
		
		The above equation could be decomposed into two parts, one is of order $n$, while the other is of order 1. One may easily verify that the order $n$ part yields
		$$M_k=-\frac{1}{z}-\frac{\sum_{k_2=1}^KQ^{(2)}_{kk_2}\alpha_{k_2}M_{k_2}M_k}{z},$$
		which directly follows from the quadratic vector equation \eqref{eq:QVE}, thus canceled.

		The order $1$ part yields
		\begin{equation*}
			\begin{aligned}
				Y_k=&-\frac{1}{z}\sum_{l=1}^KQ^{(2)}_{kl}[\alpha_{l}M_{l}Y_k+\alpha_{k}M_{k}Y_{l}]-\frac{1}{z}\sum_{l=1}^K{Q^{(2)}_{kl}}\E \ul{GT_{l}GT_k}+\frac{2Q^{(2)}_{kk}\alpha_kM_k^2}{z}\\ 
				&-\frac{1}{z}\sum_{l=1}^K{Q_{kl}^{(4)}}\alpha_k\alpha_lM_k^2M_l^2,
			\end{aligned}
		\end{equation*}
		which reformulates into our (\ref{eq:TrG-n}).

	\end{proof}
	
	%
	%
	%
	%

	\subsection{System of equations for $Cov_{lm}(z_1,z_2)$}\label{sec:covlm}

	As stated in Section \ref{sec:outlineofpfblockWigner}, the estimation for  $Cov(z_1,z_2)$  can be decomposed into the summation of the block-wise covariance functions $\{Cov_{lm}(z_1,z_2)\}_{l,m=1}^K$. In this subsection, we will derive the system of equations for $\{Cov_{lm}(z_1,z_2)\}_{l,m=1}^K$.
	
	In this section and thereafter, we will use $\langle \cdot\rangle$ for centered random variables. First, note that for any two random variables $X$ and $Y$, we have
	\begin{align*}
		\E\langle X\rangle \langle Y\rangle=\E[X-\E X][Y-\E Y]=\E X[Y-\E Y]=\E X\langle Y\rangle,
	\end{align*}
	then by cumulant expansion formula, 
	\begin{equation}
		\begin{aligned}
			&\frac{1}{n^2}z_1\E Cov_{lm}(z_1,z_2)=z_1\E\langle \underline{G(z_1)T_l}\rangle\langle\underline{T_mG(z_2)} =z_1\E\underline{G(z_1)T_l}\langle\underline{T_mG(z_2)} \rangle\\ 
			=&\E\underline{HG(z_1)T_l}\langle\underline{T_mG(z_2)} \rangle
			=\frac{1}{n}\E\sum_{i\in C_l,j}H_{ij}G_{ij}(z_1)\langle\underline{T_mG(z_2)} \rangle\\ 
			=&\frac{1}{n}\sum_{a+b=0}^{\infty}\E\sum_{i\in C_l,j}\frac{\kappa^{(a+b+1)}_{ij}}{n^{\frac{a+b+1}{2}}a!b!}\frac{\partial^a G_{ij}(z_1)\partial^b\langle\underline{T_mG(z_2)}\rangle}{\partial H_{ij}^{(a+b)}} \\ 
			=&\sum_{a+b=0}^5I_{2,(a,b)}+\varepsilon_{I_2}.
		\end{aligned}
	\end{equation}

	Now we proceed to the detailed treatments for the terms $\{I_{2,(a,b)}\}_{a+b=1}^3$.  
	It can be  shown that $\sum_{a+b=4}^5I_{2,(a,b)}$ are minor via similar calculations, the details for calculating $\sum_{a+b=4}^5I_{2,(a,b)}$ are tedious and of minor importance thus omitted here and in the proof of \eqref{eq:covariancefunc}.
	
	\begin{proof}[Proof of \eqref{eq:covariancefunc}]

			\begin{align*}
				&I_{2,(1,0)}\\ 
				=&-\frac{1}{n}\E\sum_{k_1=1}^K\frac{Q^{(2)}_{lk_1}}{n}[Tr(T_lG(z_1)T_{k_1}G(z_1))+Tr(T_lG(z_1))Tr(T_{k_1}G(z_1))]\langle \underline{T_mG(z_2)}\rangle\\ 
				&+\frac{2Q^{(2)}_{ll}}{n^2}\E\sum_{i\in C_l}[G_{ii}^2(z_1)-M_l^2(z_1)]\langle\underline{T_mG(z_2)}\rangle \\ 
				=&-\E\sum_{k_1=1}^K\frac{Q^{(2)}_{lk_1}}{n}\langle\underline{T_lG(z_1)T_{k_1}G(z_1)}\rangle\langle \underline{T_mG(z_2)}\rangle-\E\sum_{k_1=1}^KQ^{(2)}_{lk_1}\langle\underline{T_lG(z_1)}\rangle\ \langle\underline{T_{k_1}G(z_1)}\rangle\ \langle \underline{T_mG(z_2)}\rangle\\ 
				&-\E\sum_{k_1=1}^KQ^{(2)}_{lk_1}\langle\underline{T_lG(z_1)}\rangle\underline{T_{k_1}G(z_1)}\langle \underline{T_mG(z_2)}\rangle-\E\sum_{k_1=1}^KQ^{(2)}_{lk_1}\underline{T_lG(z_1)}\langle\underline{T_{k_1}G(z_1)}\rangle\langle \underline{T_mG(z_2)}\rangle \\ 
				&+O(\frac{n^{\varepsilon}}{n^{5/2}})\\ 
				=&-\sum_{k_1=1}^KQ^{(2)}_{lk_1}\alpha_{k_1}M_{k_1}(z_1)Cov_{lm}(z_1,z_2)-\sum_{k_1=1}^KQ^{(2)}_{lk_1}\alpha_lM_l(z_1)Cov_{k_1m}(z_1,z_2) +O(\frac{n^{\varepsilon}}{n^{5/2}}).
			\end{align*}

			\begin{align*}
				&I_{2,(0,1)}\\
				=&-\frac{1}{n}\E\sum_{k_1=1}^K\sum_{i\in C_l,j\in C_{k_1}}\frac{Q^{(2)}_{lk_1}}{n}G_{ij}(z_1)\frac{1}{n}\sum_{k\in C_m}(G_{kj}(z_2)G_{ik}(z_2)+G_{ki}(z_2)G_{jk}(z_2))\\ 
				&+ \frac{1}{n}\E \sum_{i\in C_l}\frac{Q^{(2)}_{ll}}{n}G_{ii}(z_1)\frac{1}{n}\sum_{k\in C_m}(G_{ki}(z_2)G_{ik}(z_2)+G_{ki}(z_2)G_{ik}(z_2))\\ 
				=&-\E\sum_{k_1=1}^K\sum_{i\in C_l,j\in C_{k_1}}\frac{2Q^{(2)}_{lk_1}}{n^3}G_{ij}(z_1)(GT_mG)_{ij}(z_2)+ \E \sum_{i\in C_l}\frac{2Q^{(2)}_{ll}}{n^3}G_{ii}(z_1)(GT_mG)_{ii}(z_2)\\ 
				=&-\sum_{k_1=1}^K\frac{2Q^{(2)}_{lk_1}}{n^2}\E\ul{G(z_1)T_lG(z_2)T_mG(z_2)T_{k_1}}+\frac{2Q^{(2)}_{ll}}{n^2}M_{l}(z_1)\E \ul{(T_lGT_mG)(z_2)}+O(\frac{n^{\varepsilon}}{n^{5/2}}).
			\end{align*}

		We claim that in $I_{2,(1,0)}$ both $\frac{1}{n}\E\langle\underline{ T_lG(z_1)T_{k_1}G(z_1)} \rangle \langle \underline{T_mG(z_2)}\rangle$ and the triple-product term $\E\langle\underline{T_lG(z_1)}\rangle\ \langle\underline{T_{k_1}G(z_1)}\rangle\ \langle \underline{T_mG(z_2)}\rangle$
		will be the minor terms. The second one is of order $\frac{n^{\varepsilon}}{n^3}$  by Cauchy inequality, thus minor. The first one, however, requires a little bit effort.
		
		To get an sufficient upper bound for $\frac{1}{n}\E\langle\underline{ T_lG(z_1)T_{k_1}G(z_1)} \rangle \langle \underline{T_mG(z_2)}\rangle$, we only need to show that $\E|\langle \underline{T_{k_1}G(z)T_{k_2}G(z)}\rangle|^2$ is of order $O(n^{-t})$ for some $t>0$. By intuition from the classic Wigner matrix, the essential order for $\E|\langle \underline{T_{k_1}G(z)T_{k_2}G(z)}\rangle|^2$  should be $O(n^{-2})$. We refer to the Section \ref{sec:boundtgtgvar} for the details.
		
		Also, $I_{2,(0,1)}$ gives rise to the quantities $\E \ul{G(z_1)T_lG(z_2)T_mG(z_2)T_{k_1}}$ which will be treated in Section \ref{sec:tgtgtg} and $\E\ul{G(z_2)T_lG(z_2)T_m}$ which has already been studied in Section \ref{sec:mean1}.
		\begin{equation}
			\begin{aligned}
				I_{2,(2,0)}=&\frac{1}{n}\E\sum_{i\in C_l,j}\frac{\kappa_{ij}^{(3)}}{n^{3/2}}[(G_{ij}(z_1))^3+3G_{ii}(z_1)G_{jj}(z_1)G_{ij}(z_1)]\langle \underline{T_mG(z_2)}\rangle \\ 
				=&\frac{1}{n}\E\sum_{i\in C_l,j}\frac{\kappa_{ij}^{(3)}}{n^{3/2}}[3G_{ii}(z_1)G_{jj}(z_1)G_{ij}(z_1)]\langle\underline{T_mG(z_2)}\rangle+O(\frac{n^{\varepsilon}}{n^3}).
			\end{aligned}
		\end{equation}
		
		Note that one argument for higher-order expansion terms in the  cumulant expansion that we will use over and over again is that we can ignore the diagonal terms in many situations since there are only $n$ diagonal terms which are at most $O(1)$ each. To be more specific,
		\begin{align*}
			\frac{n^{\varepsilon}}{n^{\frac{d+1}{2}+2}}|\sum_{m=1}^n\frac{\partial^dG_{mm}}{\partial  H_{mm}^d}|\le \frac{n^{\varepsilon}}{n^{\frac{d+1}{2}+1}}\le \frac{C'n^{\varepsilon}}{n^{(d+3)/2}}=o(n^{-2}), d=2,3,
		\end{align*}
		then w.l.o.g. we can ignore the diagonal terms here. Further, because  $\langle\underline{T_mG}\rangle$ is $O_{\prec}(\frac{n^{\varepsilon}}{n})$, we only need to show that $\frac{1}{n}\E \sum_{i\in C_l,j\in C_m}\frac{\kappa_{ij}^{(3)}}{n^{3/2}}G_{ij}G_{ii}G_{jj}=o(\frac{1}{n^{1+t}})$ for any $t>0$ to  ensure that $I_{2,(2,0)}$ is vanishing.
		
		Note that the trivial bounds for $G_{ij}G_{ii}G_{jj}$ will not be sufficient. The trick here is to apply the cumulant expansion formula one more time to get certain equation of  $\frac{1}{n}\E \sum_{i\in C_l,j\in C_m}\frac{\kappa_{ij}^{(3)}}{n^{3/2}}G_{ij}G_{ii}G_{jj}=o(\frac{1}{n^{1+t}})$, hence the sufficient bounds.

		By cumulant expansion, we have
		\begin{equation}
			\begin{aligned}
				&\frac{1}{n}\E \sum_{i\in C_l,j\in C_m}\frac{\kappa_{ij}^{(3)}}{n^{3/2}}G_{ij}G_{ii}G_{jj}=\frac{Q^{(3)}_{lm}}{n^{5/2}}\E\sum_{i\in C_l,j\in C_m}G_{ij}G_{ii}G_{jj}-\frac{Q^{(3)}_{lm}}{n^{5/2}}\E\sum_{i\in C_l}\delta_{lm}G_{ii}^3\\ 
				=&\frac{Q^{(3)}_{lm}}{n^{5/2}}\E\frac{1}{z}\sum_{i\in C_l,j\in C_m}\sum_{k}H_{ik}G_{kj}G_{ii}G_{jj}+O(n^{-3/2})\\ 
				=&\frac{Q^{(3)}_{lm}}{n^{5/2}}\E\frac{1}{z}\sum_{i\in C_l,j\in C_m}\sum_{k}\frac{\kappa^{(2)}_{ik}}{n}\frac{\partial (G_{kj}G_{ii}G_{jj})}{\partial H_{ik}}+\frac{Q^{(3)}_{lm}}{n^{5/2}}\E\frac{1}{z}\sum_{i\in C_l,j\in C_m}\sum_{k}\frac{\kappa^{(3)}_{ik}}{2!n^{3/2}}\frac{\partial^2 (G_{kj}G_{ii}G_{jj})}{\partial H_{ik}^2}\\ 
				&+\frac{Q^{(3)}_{lm}}{n^{5/2}}\E\frac{1}{z}\sum_{i\in C_l,j\in C_m}\sum_{k}\frac{\kappa^{(4)}_{ik}}{3!n^2}\frac{\partial^3 (G_{kj}G_{ii}G_{jj})}{\partial H_{ik}^3}+O(n^{-3/2}).
			\end{aligned}
		\end{equation}
		
		Then the problem becomes to derive bounds for the terms 
		\begin{align*}
			&\frac{Q^{(3)}_{lm}}{n^{5/2}}\E\frac{1}{z}\sum_{i\in C_l,j\in C_m}\sum_{k}\frac{\kappa^{(2)}_{ik}}{n}\frac{\partial (G_{kj}G_{ii}G_{jj})}{\partial H_{ik}}\\ 
			=&\frac{Q^{(3)}_{lm}}{n^{7/2}}\E\frac{1}{z}\sum_{i\in C_l,j\in C_m}\sum_{k_1=1}^K\sum_{k\in C_{k_1}}\kappa_{ik}^{(2)}[-(G_{ik}G_{jk}+G_{ij}G_{kk})G_{ii}G_{jj}\\ 
			&-2G_{kj}G_{ii}G_{ik}G_{jj}-2G_{kj}G_{ii}G_{ij}G_{kj}]\\ 
			=&-\frac{Q^{(3)}_{lm}}{n^{7/2}}\E\frac{1}{z}\sum_{i\in C_l,j\in C_m}\sum_{k_1=1}^KQ^{(2)}_{lk_1}Tr(T_{k_1}G)G_{ij}G_{ii}G_{jj}+O(\frac{K}{n^{3/2}})\\ 
			=&-\frac{Q^{(3)}_{lm}}{n^{7/2}z}\sum_{k_1=1}^KQ^{(2)}_{lk_1}\E Tr(T_{k_1}G)\E\sum_{i\in C_l,j\in C_m} G_{ij}G_{ii}G_{jj}+O(\frac{Kn^{\varepsilon}}{n^{3/2}}).\\ 
		\end{align*}

		In the meantime,
		\begin{align*}
			&\frac{Q^{(3)}_{lm}}{n^{5/2}}\E\frac{1}{z}\sum_{i\in C_l,j\in C_m}\sum_{k}\frac{\kappa_{ik}^{(3)}}{n^{3/2}}\frac{\partial^2 (G_{kj}G_{ii}G_{jj})}{\partial H_{ik}^2}\\ 
			=&\sum\{\text{at least two of }i,j,k\text{ would be the same}\} +\sum\{i,j,k\text{ are mutually different}  \}\\ 
			=&O(\frac{n^{\varepsilon}}{n^{3/2}}),
		\end{align*}
		while the trivial upper bound is already sufficient for the $4$-th order term  
		\begin{align*}
			&\frac{Q^{(3)}_{lm}}{n^{5/2}}\E\frac{1}{z}\sum_{i\in C_l,j\in C_m}\sum_{k}\frac{\kappa^{(4)}_{ik}}{n^{2}}\frac{\partial^3 (G_{kj}G_{ii}G_{jj})}{\partial H_{ik}^3}=O(n^{-3/2}).\\ 
		\end{align*}
		
		Thus, from  above estimations we know 
		\begin{equation}\label{eq:cumulantreproduce}
			\begin{aligned}
				&\frac{1}{n}\E \sum_{i\in C_l,j\in C_m}\frac{\kappa^{(3)}_{ij}}{n^{3/2}}G_{ij}G_{ii}G_{jj}\\ 
				=& -\frac{Q^{(3)}_{lm}}{n^{5/2}z}\sum_{k_1=1}^KQ^{(2)}_{lk_1}\E\frac{ Tr(T_{k_1}G)}{n}\E\sum_{i\in C_l,j\in C_m} G_{ij}G_{ii}G_{jj}+O(\frac{Kn^{\varepsilon}}{n^{3/2}}),
			\end{aligned}
		\end{equation}
		instantly we come to the conclusion that 
		\begin{equation}
			\E\sum_{i\in C_l, j\in C_m}G_{ij}G_{ii}G_{jj}=O(Kn^{1+\varepsilon}).
		\end{equation}
		Thus, $	\frac{1}{n}\E\sum_{i\in C_l,j\in C_m}\frac{\kappa^{(3)}_{ij}}{n^{3/2}}G_{ij}G_{ii}G_{jj}$ is minor and  $I_{2,(2,0)}$ is also minor.

			\begin{align*}
				&I_{2,(1,1)}\\ 
				=&\frac{1}{n}\E\sum_{i\in C_l,j}\frac{2\kappa_{ij}^{(3)}}{2!n^{3/2}}
				(G_{ij}^2(z_1)+G_{ii}(z_1)G_{jj}(z_1))\frac{1}{n}\sum_{k\in C_m}[G_{kj}(z_2)G_{ik}(z_2)+G_{ki}(z_2)G_{jk}(z_2)]\\ 
				=& \frac{1}{n^{7/2}}\sum_{k_1=1}^K\sum_{i\in C_l,j\in C_{k_1}}2Q^{(3)}_{lk_1}G_{ii}(z_1)G_{jj}(z_1)(GT_mG)_{ij}(z_2)+O(\frac{n^{\varepsilon}}{n^{5/2}}).
			\end{align*}
		
		Simply note that $|G_{ii}(z)|\le |\frac{1}{\Im z}|$, $||G(z)||\le |\frac{1}{\Im z}|$ and $||T_m||=1$, we have
		\begin{equation}
			\begin{aligned}
				&\E\frac{1}{n^{7/2}}\sum_{i\in C_l,j\in C_{k_1}}Q^{(3)}_{lk_1}G_{ii}(z_1)G_{jj}(z_1)(GT_mG)_{ij}(z_2)\\ 
				=&\E\frac{1}{n^{7/2}}\sum_{i\in C_l,j\in C_{k_1}}Q^{(3)}_{lk_1}diag(G(z_1))\times (GT_mG)(z_2)\times diag(G(z_1))\\ 
				=&O(n^{-5/2}).
			\end{aligned}
		\end{equation}

		Further,		\begin{align*}
			&I_{2,(0,2)}\\ 
			=&-\E\sum_{k_1=1}^K\sum_{i\in C_l,j\in C_{k_1}}\frac{\kappa_{lk_1}^{(3)}}{n^{7/2}}G_{ij}(z_1)\sum_{k\in C_m}(G_{kj}G_{ij}G_{ik}+G_{kj}G_{ii}G_{jk}+G_{ki}G_{jj}G_{ik}+G_{ki}G_{ji}G_{jk})\\ 
			=&-\E\sum_{k_1=1}^K\sum_{i\in C_l,j\in C_{k_1}}\frac{\kappa_{lk_1}^{(3)}}{n^{7/2}}G_{ij}(z_1)[(GT_mG)_{ij}G_{ij}+(GT_mG)_{jj}G_{ii}+(GT_mG)_{ii}G_{jj}\\ 
			&+(GT_mG)_{ji}G_{ji}](z_2)\\ 
			=&O(\frac{n^{\varepsilon}}{n^{5/2}}),
		\end{align*}
		where we use the fact that 
			\begin{align*}
				&\sum_{i\in C_l,j\in C_{k_1}}\frac{Q^{(3)}_{lk_1}}{n^{7/2}}G_{ii}(GT_mG)_{ij}(GT_mG)_{jj}|= \frac{Q^{(3)}_{lk_1}}{n^{7/2}}(diag(T_lG)\times(GT_mG)\times diag(T_{k_1}GT_mG))\\ 
				=&O(\frac{1}{n^{5/2}}).
			\end{align*}

		%
		%
		%
			\begin{align*} 
			I_{2,(3,0)}	=&\frac{1}{n}\E\sum_{k_1=1}^K\sum_{i\in C_l,j\in C_{k_1},i\ne j}\frac{Q^{(4)}_{lk_1}}{n^2}(G_{ij}^4+6G_{ij}^2G_{ii}G_{jj}+G_{ii}^2G_{jj}^2)\langle\underline{ T_mG(z_2)}\rangle\\ 
				=&\sum_{k_1=1}^K\sum_{i\in C_l,j\in C_{k_1},i\ne j}\frac{Q^{(4)}_{lk_1}}{n^3}M_l^2M_{k_1}^2\E\langle\underline{ T_mG(z_2)}\rangle
				\\ &+\sum_{k_1=1}^K\sum_{i\in C_l,j\in C_{k_1},i\ne j}\frac{Q^{(4)}_{lk_1}}{n^3}\E(G_{ii}^2G_{jj}^2-M_l^2M_{k_1}^2)\langle\underline{ T_mG(z_2)}\rangle +O(\frac{n^{\varepsilon}}{n^3})\\ 
				=&O(\frac{n^{2\varepsilon}}{n^{5/2}}).
			\end{align*}
			\begin{align*}
			I_{2,(2,1)}	=&-\frac{3}{n}\E\sum_{i\in C_l,j}\frac{2\kappa_{ij}^{(4)}}{3!n^2}[G_{ij}^3(z_1)+G_{ij}(z_1)G_{ii}(z_1)G_{jj}(z_1)]\\ 
				&\times\frac{3}{n}\sum_{k\in C_m}[G_{ki}(z_2)G_{jk}(z_2)+G_{kj}(z_2)G_{ik}(z_2)]\\ 
				=&-\frac{3}{n^2}\E\sum_{i\in C_l,j}\frac{2\kappa_{ij}^{(4)}}{n^2}[G_{ij}^3(z_1)+G_{ij}(z_1)G_{ii}(z_1)G_{jj}(z_1)]2(GT_mG)_{ij}(z_2)\\ 
				=&O(\frac{n^{\varepsilon}}{n^{5/2}}).
			\end{align*}
			\begin{align*}
		I_{2,(1,2)}		=	&-\frac{3}{n}\E\sum_{i\in C_l,j}\frac{2\kappa^{(4)}_{ij}}{3!n^2}[G_{ij}^2(z_1)+G_{ii}(z_1)G_{jj}(z_1)]\\ 
				&\times\frac{1}{n}[(GT_mG)_{ij}G_{ij}+(GT_mG)_{ii}G_{jj}+(GT_mG)_{ji}G_{ji}+(GT_mG)_{jj}G_{ii}](z_2)\\ 
				=&-\E\sum_{k=1}^K\frac{Q^{(4)}_{lk}}{n^2}[\alpha_kM_l(z_1)M_k(z_1)\ul{(GT_mGT_l)(z_2)}M_k(z_2)\\ 
				&+\alpha_lM_l(z_1)M_k(z_1)\ul{(GT_mGT_k)(z_2)}M_l(z_2)]+O(\frac{n^{\varepsilon}}{n^{5/2}}). \\ 
			\end{align*}
		\begin{align*}
			&I_{2,(0,3)}\\
			=&-\frac{1}{n}\E\sum_{i\in C_l,j}\frac{\kappa_{ij}^{(4)}}{n^2}G_{ij}(z_1)\frac{1}{n}[G_{ij}^2(G^2)_{jj}+G_{ij}^2(G^2)_{jj}+G_{ii}G_{jj}(G^2)_{ij}+G_{ii}G_{ji}(G^2)_{jj}\\ &+G_{jj}G_{ij}(G^2)_{ii}+G_{jj}G_{ii}(G^2)_{ji}+G_{ji}G_{jj}(G^2)_{ii}+G_{ji}^2(G^2)_{ji}](z_2)\\ 
			=&O(\frac{n^{\varepsilon}}{n^{5/2}}).
		\end{align*}
		
		Similarly, we can derive that $\sum_{a+b=4}^5I_{2,(a,b)}$ is also minor. Further note that
		\begin{align*}
			\varepsilon_{I_2}=\frac{1}{n}\sum_{i,j}C\sup_t|f^{(5+1)}(t)|E[|\xi|^{5+2}]=O(\frac{1}{n^{5/2}}).
		\end{align*}
		
	To conclude, the covariance function $\{Cov_{lm}(z_1,z_2)\}_{l,m\in [K]}$ satisfies the following system of equations to order 1
		\begin{equation}\label{eq:covlm}
			\begin{aligned}
				&z_1Cov_{lm}(z_1,z_2)\\ 
				=&-\sum_{k_1=1}^KQ^{(2)}_{lk_1}\alpha_{k_1}M_{k_1}(z_1)Cov_{lm}(z_1,z_2)-\sum_{k_1=1}^KQ^{(2)}_{lk_1}\alpha_lM_l(z_1)Cov_{k_1m}(z_1,z_2)\\ 
				&-\E\sum_{k_1=1}^K{2Q^{(2)}_{lk_1}}\ul{G(z_1)T_lG(z_2)T_mG(z_2)T_{k_1}}+{2Q^{(2)}_{ll}}M_{l}(z_1)\E \ul{(T_lGT_mG)(z_2)}\\ 
				&-\sum_{k=1}^K{Q^{(4)}_{lk}}[\alpha_kM_l(z_1)M_k(z_1)\E\ul{(GT_mGT_l)(z_2)}M_k(z_2)\\ 
				&\ +\alpha_lM_l(z_1)M_k(z_1)\E\ul{(GT_mGT_k)(z_2)}M_l(z_2)].
			\end{aligned}
		\end{equation}
		
	\end{proof}
	Now we have derived the system of equations for $\{Cov_{lm}(z_1,z_2)\}_{K\times K}$. However, several questions still need to be answered. First we will show that terms with the form $\E [\langle\underline{T_lGT_mG}\rangle \langle\underline{T_kGT_jG}\rangle$ would be minor in Section \ref{sec:boundtgtgvar}. Second, we will establish a system of equations  for  $\E \ul{(T_lG(z_1)T_mG(z_2)T_rG(z_2))}$  in Section \ref{sec:tgtgtg}.

	\subsection{Bound for $\E|\langle\underline{T_lGT_mG} \rangle|^2$}\label{sec:boundtgtgvar}
	
	Now we show that is  $\E|\langle\underline{T_lGT_mG} \rangle|^2$ of minor order for any $l,m\in[K]$. We start from the trivial bound that $\E|\langle\underline{T_lGT_mG} \rangle|^2=O(1)$. Again, we apply the cumulant expansion formula to $\E\langle \underline{T_lGT_mG}\rangle\langle\underline{T_rG^*T_sG^*}\rangle$.
	\begin{align*}
		&\E\langle \underline{T_lGT_mG}\rangle\langle\underline{T_rG^*T_sG^*}\rangle\\ 
		=&\frac{1}{n}\frac{1}{z}\E \sum_{ij}H_{ij}(GT_lGT_m)_{ji}\langle\underline{T_rG^*T_sG^*}\rangle-\frac{1}{z}\delta_{lm}\E \underline{T_lG}\langle\underline{T_rG^*T_sG^*}\rangle\\ 
		=&\E\frac{1}{nz}\sum_{ij}\frac{\kappa^{(2)}_{ij}}{n}\frac{\partial e_j'(GT_lGT_m)e_i}{\partial H_{ij}}\langle\underline{T_rG^*T_sG^*}\rangle+ \E\frac{1}{nz}\sum_{ij}\frac{\kappa^{(2)}_{ij}}{n}(GT_lGT_m)_{ji}\frac{\partial \underline{T_rG^*T_sG^*}}{\partial H_{ij}}\\ 
		&+\E\frac{1}{nz}\sum_{ij}\frac{\kappa^{(3)}_{ij}}{2!n^{3/2}}\frac{\partial^2 e_j'(GT_lGT_m)e_i}{\partial H_{ij}^2}\langle\underline{T_rG^*T_sG^*}\rangle+\E\frac{1}{nz}\sum_{ij}\frac{\kappa^{(3)}_{ij}}{2!n^{3/2}}(GT_lGT_m)_{ji}\frac{\partial^2 \underline{T_rG^*T_sG^*}}{\partial H_{ij}^2}\\ 
		&+\E\frac{1}{nz}\sum_{ij}\frac{\kappa^{(3)}_{ij}}{2!n^{3/2}}2\frac{\partial e_j'(GT_lGT_m)e_i}{\partial H_{ij}}\frac{\partial \underline{T_rG^*T_sG^*}}{\partial H_{ij}}+o(\frac{1}{n^2})\\ 
		&+\E\frac{1}{nz}\sum_{ij}\frac{\kappa^{(4)}_{ij}}{3!n^2}\frac{\partial^3 e_j'(GT_lGT_m)e_i}{\partial H_{ij}^3}\langle\underline{T_rG^*T_sG^*}\rangle+\E\frac{1}{nz}\sum_{ij}\frac{\kappa^{(4)}_{ij}}{3!n^2}3\frac{\partial^2 e_j'(GT_lGT_m)e_i}{\partial H_{ij}^2} \frac{\partial \underline{T_rG^*T_sG^*}}{\partial H_{ij}}     \\ 
		&+\E\frac{1}{nz}\sum_{ij}\frac{\kappa^{(4)}_{ij}}{3!n^2}3\frac{\partial e_j'(GT_lGT_m)e_i}{\partial H_{ij}} \frac{\partial^2 \underline{T_rG^*T_sG^*}}{\partial H_{ij}^2}   +\E\frac{1}{nz}\sum_{ij}\frac{\kappa^{(4)}_{ij}}{3!n^{2}}(GT_lGT_m)_{ji}\frac{\partial^3 \underline{T_rG^*T_sG^*}}{\partial H_{ij}^3}\\
		=&\E\frac{1}{nz}\sum_{ij}\frac{\kappa^{(2)}_{ij}}{n}[G_{ji}(GT_lGT_m)_{ji}+G_{jj}(GT_lGT_m)_{ii}]\langle\underline{T_rG^*T_sG^*}\rangle\\ &+\E\frac{1}{nz}\sum_{ij}\frac{\kappa^{(2)}_{ij}}{n}[(GT_lG)_{ji}(GT_m)_{ji}+(GT_lG)_{jj}(GT_m)_{ii}]\langle\underline{T_rG^*T_sG^*} \rangle+O(\frac{1}{n})\\ 
		&+ \frac{1}{nz}\E\sum_{ij}\frac{\kappa^{(2)}_{ij}}{n}(GT_lGT_m)_{ji}\frac{1}{n}\sum_{k=1}^N [(T_rG^*)_{ki}(G^*T_sG^*)_{jk}+(T_rG^*)_{kj}(G^*T_sG^*)_{ik}]\\ 
		&+\frac{1}{nz}\E\sum_{ij}\frac{\kappa^{(2)}_{ij}}{n}(GT_lGT_m)_{ji}\frac{1}{n}\sum_{k=1}^N [(T_rG^*T_sG^*)_{ki}G^*_{jk}+(T_rG^*T_sG^*)_{kj}G^*_{ik}]\\ 
		=&\E\frac{1}{z}\sum_{k_1=1,k_2=1}^KQ^{(2)}_{k_1k_2}\ul{T_{k_2}G}\ \ul{GT_lGT_mT_{k_1}}\langle\underline{T_rG^*T_sG^*}\rangle\\ 
		&+\E\frac{1}{z}\sum_{k_1=1,k_2=1}^KQ^{(2)}_{k_1k_2}\ul{T_mG}\ \ul{GT_lGT_{k_2}}\langle\underline{T_rG^*T_sG^*}\rangle+O(\frac{1}{n})\\ 
		=&\E\frac{1}{z}\sum_{k_2=1}^KQ^{(2)}_{mk_2}\alpha_{k_2}M_{k_2}\frac{1}{n} \langle \underline{GT_lGT_m}\rangle \langle\underline{T_rG^*T_sG^*}\rangle\\ 
		&+\frac{1}{z}\sum_{k_2=1}^KQ^{(2)}_{mk_2}\alpha_mM_m\langle \underline{GT_lGT_{k_2}}\rangle\langle\underline{T_rG^*T_sG^*} \rangle +O(\frac{1}{n}).
	\end{align*}

	Fix $l,r,s,$ then we can see that we will have the system of equation for vector $\mathbf{R}=[R_1,\cdots,R_K]^{\top}$ 
	$$Co_1*\mathbf{R}=[O(\frac{1}{n}),\cdots,O(\frac{1}{n}),\frac{1}{z}\E \underline{T_lG}\langle \underline{T_rG^*T_sG^*}\rangle+O(\frac{1}{n}),O(\frac{1}{n}),\cdots,O(\frac{1}{n})]^{\top},$$
	where $$R_m=\E \langle\underline{GT_lGT_m}\rangle  \langle\underline{G^*T_rG^*T_s}\rangle, m\in[K].$$
	
	For simplicity of illustration, we will not distinguish $\langle\underline{GT_lGT_m}\rangle $ from $\langle \underline{T_rG^*T_sG^*}\rangle$ below. Note further that $||Co_1||=O(1)$ and $Co_1$ is non-degenerate. Our bound for $\E|\langle\underline{T_lGT_mG} \rangle|^2$ then essentially comes down to the two parts,  $\frac{1}{z}\E \underline{T_lG}\langle \underline{T_rG^*T_sG^*}\rangle$ and an $O(\frac{1}{n})$ which comes from the terms  $\E\frac{1}{nz}\sum_{ij}\frac{\kappa^{(3)}_{ij}}{2!n^{3/2}}\frac{\partial^2 e_j'(GT_lGT_m)e_i}{\partial H_{ij}^2}\langle\underline{T_rG^*T_sG^*}\rangle$  and $\E\frac{1}{nz}\sum_{ij}\frac{\kappa^{(4)}_{ij}}{3!n^2}\frac{\partial^3 e_j'(GT_lGT_m)e_i}{\partial H_{ij}^3}\langle\underline{T_rG^*T_sG^*}\rangle$, while the reminder part of the above expression contributes only with an order $O(\frac{1}{n^2})$.
	
	Note that
	$$|\E \underline{T_lG}\langle \underline{T_rG^*T_sG^*}\rangle|^2=|\E \langle\underline{T_lG}\rangle\langle \underline{T_rG^*T_sG^*}\rangle|^2\le \E |\langle\underline{T_lG}\rangle|^2\E|\langle \underline{T_rG^*T_sG^*}\rangle|^2 , $$
	by (\ref{eq:isotropic}) we instantly know that $\E |\langle\underline{T_lG}\rangle|^2\E|\langle \underline{T_rG^*T_sG^*}\rangle|^2 \le \E |\langle\underline{T_lG}\rangle|^2=O(\frac{n^{\varepsilon}}{n^2})$. Thus,   $\E|\langle\underline{T_lGT_mG} \rangle|^2=O(\frac{n^{\varepsilon}}{n})$. Note that once we obtain this bound for $\E|\langle\underline{T_lGT_mG} \rangle|^2$, we know that they are minor, then we may claim that the system of equations \eqref{eq:covlm} is an order 1 matrix equation, the solution should be also of order 1. Now the bound for  $\E |\langle\underline{T_lG}\rangle|^2$ is improved to $O(\frac{1}{n^2})$ and $\E|\langle\underline{T_lGT_mG} \rangle|^2=O(\frac{1}{n})$.

	Then  we know  that $\frac{1}{z}\E \underline{T_lG}\langle \underline{T_rG^*T_sG^*}\rangle$ will contribute to the bound via an  $O(\frac{1}{n})$. Now suppose that $\E|\langle\underline{T_lGT_mG} \rangle|^2=O(\frac{1}{n^t})$ for some $t\ge 0$. Then we know that $\frac{1}{z}\E \underline{T_lG}\langle \underline{T_rG^*T_sG^*}\rangle$ will then contribute to the bound via an $O(\frac{1}{n^{1+\frac{t}{2}}})$. And we may repeat the above process as long as we can establish the bounds simultaneously for $\E\frac{1}{nz}\sum_{ij}\frac{\kappa^{(3)}_{ij}}{2!n^{3/2}}\frac{\partial^2 e_j'(GT_lGT_m)e_i}{\partial H_{ij}^2}\langle\underline{T_rG^*T_sG^*}\rangle$ and $\E\frac{1}{nz}\sum_{ij}\frac{\kappa^{(4)}_{ij}}{3!n^2}\frac{\partial^3 e_j'(GT_lGT_m)e_i}{\partial H_{ij}^3}\langle\underline{T_rG^*T_sG^*}\rangle$, which is totally applicable since they also share a similar structure of the form $\E O(\frac{1}{n})\langle\underline{T_rG^*T_sG^*}\rangle$. Thus, we may improve the bound from $O(\frac{1}{n^t})$ to $O(\frac{1}{n^{1+\frac{t}{2}}})$ over and over again a sequence of improved bounds $O(\frac{1}{n^{\frac{3}{2}}})$, $O(\frac{1}{n^{\frac{7}{4}}})$, $O(\frac{1}{n^{\frac{15}{8}}})$, $\cdots$, until we reach the limit $O(\frac{1}{n^2})$.
	
	Also note that the above argument only yields a bound $\E|\langle\underline{T_lGT_mG} \rangle|^2=O(\frac{n^{\delta}}{n^2})$, $\delta>0$. However, we can further derive system of equations for all the $\E O(\frac{1}{n})\langle\underline{T_rG^*T_sG^*}\rangle$ terms above individually and establish bounds individually, since now $\langle\underline{T_rG^*T_sG^*}\rangle$ would either break after differentiation or remain the way they are to reproduce terms of the form $\E O(\frac{1}{n})\langle\underline{T_rG^*T_sG^*}\rangle$. However, note that  $\E|\langle\underline{T_lGT_mG} \rangle|^2=O(\frac{n^{\delta}}{n^2})$ will cease to produce higher-order structures.  We then may obtain compact bounds for those $\E O(\frac{1}{n})\langle\underline{T_rG^*T_sG^*}\rangle$  again via the system of equations approach. The whole process is repetitive and tedious, thus omitted. Then we can conclude that those $\E O(\frac{1}{n})\langle\underline{T_rG^*T_sG^*}\rangle$ above are of order $O(\frac{1}{n^2})$ and further conclude that  $\E|\langle\underline{T_lGT_mG} \rangle|^2=O(\frac{1}{n^2})$ can be achieved.

	\subsection{System of equations for $\E \ul{T_lG(z_1)T_mG(z_2)T_rG(z_2)}$}
	\label{sec:tgtgtg}
	
	Now we want to derive the system of equations that  $\{\E\underline{T_lG(z_1)T_mG(z_2)T_rG(z_2)}\}_{l,m,r=1}^K$ satisfies to order 1. Easy to observe that the contribution of higher-order cumulants would vanish and we have

	\begin{align*}
			&z_1\E\underline{G(z_1)T_lG(z_2)T_mG(z_2)T_r}=\E\underline{(HG(z_1)-I)T_lG(z_2)T_mG(z_2)T_r}\\ 
			=&\E\underline{HG(z_1)T_lG(z_2)T_mG(z_2)T_r}-\delta_{rl}\E\underline{G(z_2)T_lG(z_2)T_m},\\ 
		\end{align*}
	where \small
	\begin{align*}
		&\E\underline{HG(z_1)T_lG(z_2)T_mG(z_2)T_r}=\frac{1}{n}\E\sum_{i,j}H_{ij}(G(z_1)T_lG(z_2)T_mG(z_2)T_r)_{ji}\\ 
		=&-\frac{1}{n}\E\sum_{i,j}\frac{\kappa_{ij}^{(2)}}{n}[G_{ji}(z_1)(G(z_1)T_1G(z_2)T_mG(z_2)T_r)_{ji}+G_{jj}(z_1)(G(z_1)T_lG(z_2)T_mG(z_2)T_r)_{ii}]\\ 
		&-\frac{1}{n}\E\sum_{i,j}\frac{\kappa_{ij}^{(2)}}{n}[(G(z_1)T_lG(z_2))_{ji}(G(z_2)T_mG(z_2)T_r)_{ji}+(G(z_1)T_lG(z_2))_{jj}(G(z_2)T_mG(z_2)T_r)_{ii}]\\ 
		&-\frac{1}{n}\E\sum_{i,j}\frac{\kappa_{ij}^{(2)}}{n}[(G(z_1)T_lG(z_2)T_mG(z_2))_{ji}(G(z_2)T_r)_{ji}+(G(z_1)T_lG(z_2)T_mG(z_2))_{jj}(G(z_2)T_r)_{ii}]\\ 
		&+O(\frac{n^{\varepsilon}}{\sqrt{n}})\\ 
		=&-\frac{1}{n^2}\E\sum_{k_1,k_2=1}^KQ^{(2)}_{k_1k_2}Tr(T_{k_2}G(z_1))Tr(G(z_1)T_lG(z_2)T_mG(z_2)T_rT_{k_1})\\ 
		&-\frac{1}{n^2}\E\sum_{k_1,k_2=1}^KQ^{(2)}_{k_1k_2}Tr(G(z_1)T_lG(z_2)T_{k_2})Tr(G(z_2)T_mG(z_2)T_rT_{k_1})\\ 
		&-\frac{1}{n^2}\E\sum_{k_1,k_2=1}^KQ^{(2)}_{k_1k_2}Tr(G(z_1)T_lG(z_2)T_mG(z_2)T_{k_2})Tr(G(z_2)T_rT_{k_1})+O(\frac{n^{\varepsilon}}{\sqrt{n}})\\ 
		=&-\sum_{k_2=1}^KQ^{(2)}_{rk_2}\E\underline{(T_{k_2}G(z_1))}\ \E\underline{(G(z_1)T_lG(z_2)T_mG(z_2)T_r)} \\
		&-\sum_{k_2=1}^KQ^{(2)}_{rk_2}\E\underline{(G(z_1)T_lG(z_2)T_{k_2})}\ \E\underline{(G(z_2)T_mG(z_2)T_r)}\\ 
		&-\sum_{k_2=1}^K{Q^{(2)}_{rk_2}}\E\underline{(G(z_1)T_lG(z_2)T_mG(z_2)T_{k_2})}\ \E\underline{(G(z_2)T_r)}+O(\frac{n^{\varepsilon}}{\sqrt{n}}),
	\end{align*}
	\normalsize	 the last line follows from the fact that $\E|\langle\underline{ T_lGT_mG} \rangle|=O(\frac{n^{\varepsilon}}{n})$.
	
%
	
	\begin{remark}
		One may notice that if we fix $l$ and $m$, we will get the system of equations for $\{\E\underline{G(z_1)T_lG(z_2)T_mG(z_2)T_{k}}\}_{k=1}^K$
		with a fixed coefficient matrix
		$$Diag([-\frac{1}{M_1(z)},\cdots,-\frac{1}{M_K(z)}])+Q*diag([\alpha_1M_1,\cdots,\alpha_KM_K]).$$ It then becomes apparent that the coefficient matrix is actually universal regardless of  $l$ and $m$, which suggests that we can slice the  tensor $[\E \ul{G(z_1)T_lG(z_2)T_mG(z_2)T_r}]_{l,m,r=1}^K$ into $K$ matrices in which each  satisfies a matrix equation and we can actually do the slicing in any of the 3 directions. We believe that this phenomenon discloses certain supersymmetric patterns explored by the higher-order tensors  $[\E \ul{\Pi_{i=1}^r (G(z_{s_i})T_{t_i})}], s_i\in[q],t_i\in[K]$.
	\end{remark}

	Also, the system of equations for $[\E \ul{T_lG(z_1)T_mG(z_2)T_rG(z_2)}]_{l,m,r=1}^K$ induces another question. We still need to calculate $\{  \E \underline{G(z_1)T_lG(z_2)T_m}\}_{l,m=1}^K$.
	
	\subsection{System of equations for $\E \underline{G(z_1)T_lG(z_2)T_m}$}\label{sec:covend}
	
	\begin{lemma}
		The vector  $\mathbf{X}^{(l)}_{\mathbf{G1TG2T}}{(z_1,z_2)}$:=$ [\E \underline{G(z_1)T_lG(z_2)T_1},\cdots,\E \underline{G(z_1)T_lG(z_2)T_K}]^{\top}$ satisfies the following equation $$	Co_2{(z_1,z_2)}\mathbf{X}^{(l)}_{\mathbf{G1TG2T}}{(z_1,z_2)}=\mathbf{B}^{(l)}_{(z_1,z_2)},$$
		where 
		\begin{equation*}
			\mathbf{B}^{(l)}_{(z_1,z_2)}=  [0,\ldots,0,-\frac{\alpha_lM_l(z_2)}{z_1},0,\ldots,0]^T.
		\end{equation*}   
	\end{lemma}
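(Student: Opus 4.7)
The plan is to mirror exactly the strategy used in the proof of Lemma \ref{eq:trtgtg}, now carried out with two distinct spectral parameters $z_1$ and $z_2$. The key observation is that when we apply the resolvent identity to $G(z_1)$, the leading-order cumulant expansion produces two kinds of contractions: one where $\partial/\partial H_{ij}$ hits $G(z_1)$, and one where it hits $G(z_2)$. These two contractions supply respectively the diagonal part $-1/(z_1 M_m(z_1))$ and the off-diagonal part $Q^{(2)}_{mk}\alpha_m M_m(z_2)/z_1$ of the coefficient matrix $Co_2(z_1,z_2)$, which explains the asymmetric appearance of $z_1$ and $z_2$ in its definition.

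Concretely, I would first write
\begin{equation*}
z_1 \E\,\underline{G(z_1)T_l G(z_2)T_m} + \delta_{lm}\alpha_l M_l(z_2) \;=\; \E\,\underline{HG(z_1)T_l G(z_2)T_m} + O_{\prec}(n^{-1/2}),
\end{equation*}
where the second term on the left comes from $\underline{T_l G(z_2) T_m}=\delta_{lm}\underline{T_l G(z_2)}\approx \delta_{lm}\alpha_l M_l(z_2)$ via Lemma \ref{lem:QVE}. Then expand $\E\,\underline{HG(z_1)T_l G(z_2) T_m}$ by the cumulant expansion formula. Following the argument in the proof of Lemma \ref{eq:trtgtg} verbatim, the third- and fourth-cumulant contributions are $O(n^{-1/2})$ because they produce at most $n^2$ terms each of size $O(1/n^{5/2})$ or smaller after using $\kappa^{(3)}_{ij}=O(n^{-3/2})$, $\kappa^{(4)}_{ij}=O(n^{-2})$, and the trivial bound $\|G\|\le 1/|\Im z|$.

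At the second-cumulant level, one uses the Leibniz rule and $\partial_{H_{ij}}G(z)_{ab} = -G(z)_{ai}G(z)_{jb} - G(z)_{aj}G(z)_{ib}$ to obtain four terms. Only the two terms where both indices $i,j$ are taken to be diagonal survive at leading order, namely $-G_{jj}(z_1)(G_1T_lG_2T_m)_{ii}$ and $-(G_1T_lG_2)_{jj}(G_2T_m)_{ii}$; the remaining two involve genuinely off-diagonal Green function entries whose contribution is $O_{\prec}(n^{-1/2})$ by Lemma \ref{lem:QVE}. Using the block structure $\kappa^{(2)}_{ij}=Q^{(2)}_{\sigma(i)\sigma(j)}$, the identities $T_k T_m = \delta_{km}T_m$ and cyclicity of trace, together with $G_{jj}(z)\to M_{\sigma(j)}(z)$ and $\underline{T_k G(z)}\to \delta_{km}\alpha_m M_m(z)$ up to errors controlled by Corollary \ref{Cor:QVE}, the first surviving term collapses to $-\sum_{k}Q^{(2)}_{mk}\alpha_k M_k(z_1)\,\E\,\underline{G(z_1)T_l G(z_2)T_m}$ and the second to $-\sum_k Q^{(2)}_{mk}\alpha_m M_m(z_2)\,\E\,\underline{G(z_1)T_l G(z_2)T_k}$.

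Finally, I would invoke the QVE \eqref{eq:QVE} to replace $z_1 + \sum_k Q^{(2)}_{mk}\alpha_k M_k(z_1)$ by $-1/M_m(z_1)$, divide by $z_1$, and recognise the resulting identity as the $m$-th row of $Co_2(z_1,z_2)\,\mathbf{X}^{(l)}_{\mathbf{G1TG2T}} = \mathbf{B}^{(l)}$. The main technical nuisance — rather than a deep obstacle — is bookkeeping the error terms: one has to check that the off-diagonal Green function pieces, the $\kappa^{(3)},\kappa^{(4)}$ contributions, and the difference between $\underline{T_l G(z_2)}$ and its QVE approximation all combine to an $O_{\prec}(n^{-1/2})$ remainder that vanishes to the order $1$ at which the system is claimed to hold. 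Well-definedness of the solution requires $Co_2(z_1,z_2)$ to be non-singular on the contour $\Gamma$, which follows by the same diagonal-dominance argument applied after the proof of Lemma \ref{eq:trtgtg}, since $|M_k(z)|\le 1/|\Im z|$ makes the diagonal entries $-1/(z_1 M_k(z_1))$ dominate for $z_1$ bounded away from the spectrum.
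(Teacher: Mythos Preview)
Your proposal is correct and follows essentially the same route as the paper's own proof, which is itself just a brief repetition of the Lemma~\ref{eq:trtgtg} argument with $z_2$ in place of $z$ in the second resolvent factor. Your write-up is in fact more detailed than the paper's, which simply records the two surviving second-cumulant terms and says ``reorganizing with \eqref{eq:QVE} yields the matrix form''; the only minor slip is the statement $\underline{T_k G(z)}\to\delta_{km}\alpha_m M_m(z)$, which should read $\underline{T_k G(z)}\to\alpha_k M_k(z)$, but your subsequent identifications are correct regardless.
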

	\begin{proof}
		Analog to the case of $\E \underline{G(z)T_lG(z)T_m}$, we derive the following system of equation
		\begin{equation*}
			\begin{aligned}
				&\E \underline{G(z_1)T_lG(z_2)T_m}\\ 
				=&\frac{1}{nz_1}\E\sum_{i,j}H_{ij}(G(z_1)T_lG(z_2)T_m)_{ji}-\delta_{lm}\frac{1}{z_1}\alpha_lM_l(z_2)+O(\frac{n^{\varepsilon}}{\sqrt{n}})\\ 
				=&-\frac{1}{z_1}\sum_{k_1=1}^KQ^{(2)}_{mk_1}\alpha_{k_1}M_{k_2}(z_1)\E \underline{G(z_1)T_lG(z_2)T_m}\\ 
				&-\frac{1}{z_1}\sum_{k_1=1}^KQ^{(2)}_{mk_1}\alpha_mM_m(z_2)\E \underline{G(z_1)T_lG(z_2)T_{k_1}}
				-\delta_{lm}\frac{1}{z_1}\alpha_lM_l(z_2)+O(\frac{n^{\varepsilon}}{\sqrt{n}}).
			\end{aligned}
		\end{equation*} 
		Reorganizing the proof  with (\ref{eq:QVE})   yields the matrix form.
	\end{proof}

	Similar to $M_{GTGT}$, for  $M_{G1TG2T}(z_1,z_2):=\big( \E\underline{G(z_1)T_lG(z_2)T_m}\big)_{l,m=1}^K$, we have the following simplified form
	\begin{equation}
		M_{G1TG2T}(z_1,z_2)=-\big(Q^{(2)}-Diag ([\frac{1}{\alpha_1M_1(z_1)M_1(z_2)},\cdots,\frac{1}{\alpha_KM_K(z_1)M_K(z_2)}]^{\top})\big)^{-1},
	\end{equation}
	which, again, is in accordance with the fact that the matrix $M_{G1TG2T}(z_1,z_2)$ should be symmetric
	\begin{align*}
		Tr(G(z_1)T_lG(z_2)T_m)&=Tr(G(z_1)T_lG(z_2)T_m)^{\top}=Tr(T_mG(z_2)T_lG(z_1))\\ 
		&=Tr(G(z_1)T_mG(z_2)T_l).
	\end{align*}

	\subsection{Proof of normality}\label{sec:gaussianitynonhat}
	
	In this subsection and Section \ref{sec:gaussianityhat} only, we will use $i$ for the unit imaginary number $\sqrt{-1}$.
	
	To recover the covariance structure and prove normality, it's natural to adopt the following setting as in \cite{Khorunzhy1996} which can be viewed as variation of the Tikhomirov-Stein method. We will prove that for any integer $q$ and arbitrary
	collection $z_{1}, \ldots, z_{q}$ of complex numbers from $\mathbb{C}\backslash B_{\varepsilon_0}(\sigma(H))$, where $\epsilon_0$ is taken as in the Proof of Lemma \ref{eq:trtgtg} to ensure the uniqueness and existence of the solution. The joint probability distribution of random
	variables $(\langle G(z_1)\rangle, \ldots, \langle G(z_{q})\rangle)$ converges as $n \rightarrow \infty$ to the $q$-dimensional Gaussian distribution
	with zero mean and the covariance matrix $\left[g\left(z_{s}, z_{t}\right)\right]_{s, t=1}^{q}$ specified in the following section.

	\begin{proof}
		
		%
		Note that to prove that the process $Tr(G(z))$ converges to a Gaussian process, we need to prove the real and imaginary parts of $Tr(G(z))$ are jointly Gaussian in the limiting sense, so the first thing to do is to construct an adequate process. 
		
		Let $\gamma(z)=\gamma^{(n)}(z):=\Re \langle Tr G(z)\rangle$ and $\theta(z)=\theta^{(n)}(z)=\Im \langle Tr G(z)\rangle$, further
		$$
		\Psi(z, c)=\left\{\begin{array}{ll}{\gamma(z)} & {\text { if } c=\gamma} \\ {\theta(z)} & {\text { if } c=\theta}\end{array}\right.
		$$
		and
		$$
		(a(c), b(c))=\left\{\begin{array}{ll}{(1 / 2,1 / 2)} & {\text { if } c=\gamma} \\ {(1 / 2 i, 1 / 2 i)} & {\text { if } c=\theta},\end{array}\right.
		$$
		then $\E\{\Psi(z, c)\}=0$. And now we wanna prove that $\forall \text{ fixed }q\in \mathbb{Z}_+,\ \{z_{s}\}_{s=1}^q\in \{ \mathbb{C}\backslash B_{\varepsilon_0}(\sigma(\hat{H}))\}^q$, $ \{c_{s}\}_{s=1}^q \in\{\gamma, \theta\}^q$, the joint probability distribution of random variables
		$\Psi\left(z_{1}, c_{1}\right), \ldots, \Psi\left(z_{q}, c_{q}\right)$ is the $q$-dimensional Gaussian distribution with zero mean and covariance matrix
		$$
		\begin{aligned} \E \left\{\Psi\left(z_{s}, c_{s}\right) \Psi\left(z_{t}, c_{t}\right)\right\}=& a\left(c_{s}\right) a\left(c_{t}\right) g\left(z_{s}, z_{t}\right)+a\left(c_{s}\right) b\left(c_{t}\right) g\left(z_{s}, z_{t}^{*}\right)+\\ & a\left(c_{t}\right) b\left(c_{s}\right) g\left(z_{s}^{*}, z_{t}\right)+b\left(c_{s}\right) b\left(c_{t}\right) g\left(z_{s}^{*}, z_{t}^{*}\right),\ \end{aligned}
		$$
		where $g(z_s,z_t)$ should be in accordance with our covariance function $Cov(z_s,z_t)$ previously defined in Section \ref{sec:covlm}.
		Then we need to consider the characteristic function of these random variables $\Psi\left(z_{1}, c_{1}\right), \ldots, \Psi\left(z_{q}, c_{q}\right)$, which we
		shall write in the form
		\begin{align*} e_q=e_{q}^{(n)}\left(T_{q}, C_{q}, Z_{q}\right):=&\prod_{s=1}^{q} \exp \left\{i \tau_{s}\left[a\left(c_{s}\right) Tr\langle G(z_{s})\rangle+b\left(c_{s}\right) Tr\langle G(z_{s}^{*})\rangle\right]\} \right.,
		\end{align*}
		where  $T_{q}=\left(\tau_{1}, \ldots, \tau_{q}\right), C_{q}=\left(c_{1}, \ldots, c_{q}\right), Z_{q}=\left(z_{1}, \ldots, z_{q}\right)$.  
		For simplicity, we shall use $e_q$ for $e_q^{(n)}(T_q,C_q,Z_q)$ when there is no confusion. Also, we would use $a_s$ and $b_s$ to denote $a(c_s)$ and $b(c_s)$.
		Instantly, we have
		$$
		\frac{\partial}{\partial \tau_{s}} \E\left\{e_{q}\right\}=i \E\left\{e_{q}\left[a_s Tr \langle G(z_{s})\rangle+b_sTr \langle G(z_{s}^{*})\rangle\right]\right\},
		$$
		and our main goal is to show that there exist sequences  of coefficients of the covariance matrices $({\Sigma}_{s t}^{(n)})_{ s, t=1}^q$ s.t. for each fixed $T_{q}$
		$$\lim _{n \rightarrow \infty}\left|\E\left\{e_{q}^{(n)}\left[a_sTr\langle G(z_{s})\rangle+b_s Tr\langle G(z_{s}^{*})\rangle\right]\right\}-i \sum_{t=1}^{q} \tau_{s} \Sigma_{s t}^{(n)} \E\left\{e_{q}^{(n)}\right\}\right|=0, \ z \in \mathbb{C}\backslash\mathbb{R},$$
		and further, the limits of all these coefficients exist
		$$
		\Sigma_{s t}=\lim _{n \rightarrow \infty} \Sigma_{s t}^{(n)}
		$$
		and are in accordance with our previous results on the covariance function.
		
		First, we need  to calculate $\E\left\{e_{q}Tr \langle G(z)\rangle\right\}$. By resolvent identity and cumulant expansion, we have
		\begin{align*}
			&\E\left\{e_{q} \langle TrG(z)\rangle\right\}=\E\left\{\langle e_{q} \rangle Tr G(z)\right\}=\E\langle e_{q} \rangle \sum_{j=1}^n G_{ii}(z)=\frac{1}{z}\E\langle e_{q} \rangle \sum_{j=1}^n (HG)_{ii}(z)\\ 
			=&z^{-1}\sum_{j, m=1}^{n}	(\sum_{a+b=1}^3\frac{\kappa^{(a+b+1)}_{mj}}{(a+b)!}\E[\frac{\partial^a \langle e_q\rangle \partial^bG_{jm}}{\partial H_{mj}^{a+b}}]+\varepsilon_{I_3,mj}).\\ 
			=&\sum_{a+b=1}^3I_{3,(a,b)}+\varepsilon_{I_3}.\\
		\end{align*}		
		%
		
		Note  that higher-order expansion terms vanish, namely  $\varepsilon_{I_3}\le \sum_{mj}|\varepsilon_{I_3,mj}|=O(\frac{1}{\sqrt{n}})$, thus minor.

		We begin with \begin{align*}
			&zI_{3,(1,0)}=-\frac{1}{n}\E\sum_{j,m}\kappa^{(2)}_{jm}(G^2_{jm}+G_{jj}G_{mm})\langle e_q\rangle\\ 
			=&-\frac{1}{n}\E\sum_{k,l=1}^KQ^{(2)}_{kl}[Tr(T_kG)Tr(T_lG)-\E (Tr(T_kG)Tr(T_lG))]e_q\\
			=&-\frac{1}{n}\E\sum_{k,l=1}^KQ^{(2)}_{kl}\Big\{[Tr(T_kG)-\E (Tr(T_kG)][Tr(T_lG)-\E Tr(T_lG)]e_q+e_qTr(T_lG)\E [Tr(T_kG)]\\ 
			&+e_qTr(T_kG)\E [Tr(T_lG)]-e_q\E[Tr(T_lG)]\E [Tr(T_kG)]-e_q\E [Tr(T_kG)Tr(T_lG)]\Big\}\\	
			=&-\frac{1}{n}\E\sum_{k,l=1}^KQ^{(2)}_{kl}\Big\{[Tr(T_kG)-\E (Tr(T_kG)][Tr(T_lG)-\E Tr(T_lG)]e_q\\ &+e_q[Tr(T_lG)-\E Tr(T_lG)]\E [Tr(T_kG)]+e_q[Tr(T_kG)-\E Tr(T_kG)]\E [Tr(T_lG)]\\ 
			&+e_q\E[Tr(T_lG)]\E [Tr(T_kG)]-e_q\E [Tr(T_kG)Tr(T_lG)]\Big\}\\	
			=&-\frac{1}{n}\E\sum_{k,l=1}^KQ^{(2)}_{kl}\Big\{e_q[Tr(T_lG)-\E Tr(T_lG)]\E [Tr(T_kG)]\\
			&+e_q[Tr(T_kG)-\E Tr(T_kG)]\E [Tr(T_lG)]\Big\}+o(1)\\
			=&-\sum_{k,l=1}^KQ_{kl}^{(2)}\Big \{\alpha_kM_k\E[\langle T_lG\rangle e_q]+\alpha_lM_l\E[\langle T_kG\rangle e_q] \Big\}	+o(1).		
		\end{align*}
		
		In other words, we observe a system of equations structure for $\{\E [\langle Tr T_lG\rangle e_q]\}_{l=1}^K$ here. Though we don't need to derive the system explicitly, we still need to compare the system of equations for $\{\E [\langle Tr T_lG\rangle e_q]\}_{l=1}^K$ with \eqref{eq:covlm}. $I_{3,(1,0)}$ above shows the matching between the coefficient parts. Later we will compare the constant parts of both systems.
		
		Before we proceed further, we need to calculate $\frac{\partial e_q}{\partial H_{mj}}$, 
		Note that	
		\begin{align*}
			\frac{\partial \langle e_q \rangle}{\partial H_{jm}}=\frac{\partial e_q }{\partial H_{jm}}=&\frac{\partial \left\{\prod_{s=1}^{q} \exp \left\{i \tau_{s}\left[a_s Tr\langle G(z_{s})\rangle+b_s Tr\langle G(z_{s}^{*})\rangle\right]\right\}\right.}{\partial H_{jm}}\\
			&=-e_q\left[ \sum_{s=1}^qi\tau_s(2a_s(G^2)_{mj}(z_s)+2b_s(G^2)_{mj}(z_s^*)) \right].	
		\end{align*}	
		
		It easily follows that
		\begin{align*}
			I_{3,(0,1)}
			=&\E\sum_{j,m=1}^n\frac{\kappa_{jm}^{(2)}}{nz}G_{jm}\frac{\partial \langle e_q \rangle}{\partial H_{jm}}\\ 
			=&-\frac{1}{nz}\E \sum_{j,m}\kappa^{(2)}_{jm}G_{jm}e_q\left[ \sum_{s=1}^qi\tau_s(2a_s(G^2)_{mj}(z_s)+2b_s(G^2)_{mj}(z_s^*)) \right]\\ 
			=&-\frac{1}{nz}\E\sum_{s=1}^q\sum_{k,l=1}^KQ^{(2)}_{kl}\left[i\tau_s2a_sTr(T_kG(z)T_lG^2(z_s))+i\tau_s2b_sTr(T_kG(z)T_lG^2(z_s^*))\right]\\ 
			&+\frac{1}{nz}\E e_q\sum_{s=1}^q(i\tau_s)\sum_{k=1}^KM_k(z)\left[2a_s Tr(T_kG^2(z_s))+2b_sTr(T_kG^2(z_s^*))\right].
		\end{align*}
		
		$	I_{3,(2,0)},	I_{3,(1,1)},	I_{3,(0,2)}, 	I_{3,(3,0)},	I_{3,(2,1)}, 	I_{3,(0,3)}$ are minor and the detailed calculations are omitted here.
		
		Further, note that
		\begin{align*}
			&\frac{\partial^2e_q}{\partial H_{jm}^2}=\frac{\partial}{\partial H_{jm}}\left\{-e_q[\sum_{s=1}^qi\tau_s[2a_s(G^2)_{mj}(z_s)+2b_s(G^2)_{mj}(z_s^*)]]\right\}\\ 
			=&e_q\Big[\sum_{s=1}^q(2a_s(G^2)_{mj}(z_s)+2b_s(G^2)_{mj}(z_s^*))\Big]^2\\ 
			&+e_q\Big\{ \sum_{s=1}^q i\tau_s[2a_s((G^2)_{mm}G_{jj}+(G^2)_{mj}G_{jm})(z_s)+2a_s(G_{mm}(G^2)_{jj}+G_{mj}(G^2)_{jm})(z_s^*)\\ 
			&+2b_s((G^2)_{mm}G_{jj}+(G^2)_{mj}G_{jm})(z_s)+2b_s(G_{mm}(G^2)_{jj}+G_{mj}(G^2)_{jm})(z_s^*)]\Big\}.
		\end{align*}
		
		Thus,
		\begin{align*}
			&I_{3,(1,2)}\\
			=&-\frac{1}{n^2z}\sum_{j,m}\frac{\kappa^{(4)}_{jm}}{2}(G^2_{jm}+G_{jj}G_{mm})e_q\Big[ \sum_{s=1}^qi\tau_s[2a_s(G^2)_{mm}(z)G_{jj}(z_s)\\ 
			&+2a_sG_{mm}(z)(G^2)_{jj}(z_s^*)+2b_s(G^2)_{mm}(z)G_{jj}(z_s)+2b_sG_{mm}(z)(G^2)_{jj}(z_s^*)]  \Big]\\ 
			=&-\frac{1}{n^2z}\sum_{k,l=1}^KQ^{(4)}_{kl}M_k(z)M_l(z)e_q\Big[ \sum_{t=1}^qi\tau_t(a_tM_k(z_t)\alpha_kTr(G^2(z_t)T_l)\\ 
			&+a_tM_l(z_t^*)\alpha_lTr(T_kG^2(z_t^*)) 
			+b_tM_k(z_t)\alpha_k Tr(T_lG^2(z_t))+b_tM_l(z_t^*)\alpha_lTr(T_kG^2(z_t^*))) \Big].
		\end{align*}
		Then we may conclude that
		\begin{equation} \label{eq:guassiancov}
			\begin{aligned}
				&\E\left\{e_{q}\left[a\left(c_{s}\right) \langle G(z_{s})\rangle+b\left(c_{s}\right) \langle G(z_{s}^{*})\rangle\right]\right\}\\ 
			\end{aligned}
	\end{equation}		
    \begin{align*}
				=&-\frac{a_s}{z_s}\sum_{k,l=1}^KQ_{kl}^{(2)}\Big \{\alpha_kM_k(z_s)\E[\langle T_lG(z_s)\rangle e_q]+\alpha_lM_l(z_s)\E[\langle T_kG(z_s)\rangle e_q] \Big\}\\
				&-\frac{b_s}{z^*_s}\sum_{k,l=1}^KQ_{kl}^{(2)}\Big \{\alpha_kM_k(z^*_s)\E[\langle T_lG(z^*_s)\rangle e_q]+\alpha_lM_l(z^*_s)\E[\langle T_kG(z^*_s)\rangle e_q] \Big\}\\
				&-\frac{a_s}{nz_s}\E e_q\sum_{t=1}^q\sum_{k,l=1}^KQ^{(2)}_{kl}\Big[i\tau_t2a_tTr(T_kG(z_s)T_lG^2(z_t))+i\tau_t2b_tTr(T_kG(z_s)T_lG^2(z_t^*))\Big]\\ 
				&+\frac{a_s}{nz_s}\E e_q\sum_{k=1}^KQ^{(2)}_{kk}M_k(z)\sum_{t=1}^q[2a_tTr(T_kG^2(z_t))+2b_tTr(T_kG^2(z_t^*))]\\ 
				&-\frac{a_s}{nz_s^*}\E e_q\sum_{t=1}^q\sum_{k,l=1}^KQ^{(2)}_{kl}[i\tau_t2a_tTr(T_kG(z_s^*)T_lG^2(z_t))+i\tau_t2b_tTr(T_kG(z_s*))T_lG^2(z_t^*)]\\ 
				&+\frac{a_s}{nz_s^*}\E e_q\sum_{k=1}^KQ^{(2)}_{kk}M_k(z_s^*)\sum_{t=1}^q[2a_tTr(T_kG^2(z_t))+2b_tTr(T_kG^2(z_t^*))]\\ 
				&-\frac{a_s}{nz_s}\E e_q\sum_{k,l=1}^KQ^{(4)}_{kl}M_{k}(z_s)M_l(z_s)\sum_{t=1}^qi\tau_t[a_tM_k(z_t)\alpha_k Tr(T_lG^2(z_t))\\ 
				&+a_tM_l(z_t^*)\alpha_lTr(T_kG^2(z_t^*)) +b_tM_k(z_t)\alpha_k Tr(T_lG^2(z_t))+b_tM_l(z_t^*)\alpha_lTr(T_kG^2(z_t^*))
				]\\ 
				&-\frac{a_s}{nz_s^*}\E e_q\sum_{k,l=1}^KQ^{(4)}_{kl}M_k(z_s^*)M_l(z_s^*)\sum_{t=1}^qi\tau_t[a_tM_k(z_t)\alpha_kTr(T_lG^2(z_t))\\ 
				&+a_tM_l(z_t^*)\alpha_lTr(T_kG^2(z_t^*))+b_tM_k(z_t)\alpha_k Tr(T_lG^2(z_t))+b_tM_l(z_t^*)\alpha_lTr(T_kG^2(z_t^*))].
	\end{align*}	
		
		Compare the above formula with 
		
		$$i \sum_{t=1}^{q} \tau_{t} \Sigma_{s t} \E\{e_{q}\} $$
		where 
		\begin{align*} \Sigma{st}=\E \left\{X\left(z_{s}, c_{s}\right) X\left(z_{t}, c_{t}\right)\right\}=& a\left(c_{s}\right) a\left(c_{t}\right) Cov(z_s,z_t)+a\left(c_{s}\right) b\left(c_{t}\right) Cov(z_s,z^*_t)+\\ & a\left(c_{t}\right) b\left(c_{s}\right) Cov(z_s,z^*_t)+b\left(c_{s}\right) b\left(c_{t}\right) Cov(z^*_s,z^*_t).  \end{align*}

		Further, note that $a(c_s)=b(c_s), \forall s,$ then  by QVE \eqref{eq:QVE} and piece-wise comparison, one can see that the above \eqref{eq:guassiancov} and  \eqref{eq:covlm} indeed lead to the same covariance structure. And the existence of the limit follows from our previous discussion in Section \ref{sec:covlm}.

		\subsection{Tightness of the process $\langle Tr G(z)\rangle$}\label{sec:tightness}
		
		After we establish the finite dimensional convergence, it remains to show that the process $Tr\langle G(z)\rangle, z\in \mathbb{C}\backslash B_{\varepsilon_0}(\sigma(H))$ is tight.

		In other words, we will show that 
		\begin{equation}
			\E |Tr\langle G(z_1)\rangle-Tr \langle G(z_2)\rangle|^2=O(|z_1-z_2|^2).
		\end{equation}
		Simply note that \begin{equation}
			\E |Tr\langle G(z_1)\rangle-Tr \langle G(z_2)\rangle|^2=\E |Tr  \langle G(z_1)(z_1-z_2)G(z_2)\rangle|^2,
		\end{equation}
		so we only need to show that $ \E|Tr  \langle G(z_1)G(z_2)\rangle|^2=O(1)$. It suffices to show that $$ \E|Tr  \langle T_lG(z_1)T_mG(z_2)\rangle|^2=O(1), \forall l,m\in[K],$$ which has been proved in Section \ref{sec:boundtgtgvar}. Therefore tightness is established.

	\end{proof} 
	
	\section{Proof of Theorem \ref{thm:mainBlockWigneremp}} \label{sec:pfofhatversion}
	Similar to the proof of Theorem \ref{thm:mainBlockWigner}, we first derive the mean function  $\E Tr(\hat{H}-z)^{-1}$ in Section \ref{sec:hatmeanpf} and the covariance function $ Cov(Tr(\hat{G}(z_1)),Tr(\hat{G}(z_2)))$ in Section \ref{sec:covhat}. Then we discuss the normality and the  tightness for this data-driven renormalized case in Sections \ref{sec:gaussianityhat} and \ref{sec:tightness2}.
	
	\subsection{Mean function $\E Tr(\hat{H}-z)^{-1}$ }\label{sec:hatmeanpf}

	\par 
	By the fact that $\|\hat{H}-H\|=o_p(\frac{\log(n)}{\sqrt{n}})$ and the resolvent expansion formula.  Note also that $\|\hat{H}-H\|$ is essentially bounded, we have
	\begin{equation}
		\begin{aligned}
			&\E Tr(\hat{G}(z))=\E Tr(G(z))-\E Tr[G(z)(\hat{H}-H)G(z)]\\ 
			&+\E Tr[G(z)(\hat{H}-H)G(z)(\hat{H}-H)G(z)]+o(\frac{\log(n)^3}{\sqrt{n}}).
		\end{aligned}
	\end{equation}
	$\E Tr(G(z))$ has been investigated in the previous sections. So we only need to estimate $\E Tr[G(z)(\hat{H}-H)G(z)]$ and $\E Tr[G(z)(\hat{H}-H)G(z)(\hat{H}-H)G(z)]$.
	\begin{align*}
		&\E Tr[G(z)(\hat{H}-H)G(z)]=\E Tr(\hat{H}-H)G(z)^2=\E\sum_{i,j=1}^n(\hat{H}_{ij}-H_{ij})(G^2)_{ji}\\ 
		=&-\E \sum_{i,j=1}^n\sum\limits_{\substack{\alpha\in C_{\sigma(i)},\\ \beta\in C_{\sigma(j)}}}\frac{H_{\alpha\beta}}{N_{\sigma(i)\sigma(j)}}(G^2)_{ji}=-\E \sum_{k,l=1}^K\sum\limits_{\substack{i\in C_{k}\\ j\in C_l}}\sum\limits_{\substack{\alpha\in C_k\\ \beta\in C_l}}\frac{H_{\alpha\beta}}{N_{kl}}(G^2)_{ji}\\ 
		=&-\E \sum_{k,l=1}^K\frac{1}{N_{kl}}\sum\limits_{\substack{i\in C_{k}\\ j\in C_l}}\sum\limits_{\substack{\alpha\in C_k\\ \beta\in C_l}}\sum_{d=1}^{\infty}\frac{\kappa_{\alpha\beta}^{(d+1)}}{d!n^{\frac{1+d}{2}}}\frac{\partial^d (G^2)_{ji}}{\partial H_{\alpha\beta}^d}=\E \sum_{d=1}^{\infty}J_{1,d},
	\end{align*}
	where 
	$$J_{1,d}:=-\sum_{k,l=1}^K\frac{1}{N_{kl}}\sum\limits_{\substack{i\in C_{k}\\ j\in C_l}}\sum\limits_{\substack{\alpha\in C_k\\ \beta\in C_l}}\frac{\kappa_{\alpha\beta}^{(d+1)}}{d!n^{\frac{1+d}{2}}}\frac{\partial^d (G^2)_{ji}}{\partial H_{\alpha\beta}^d}.$$
	\begin{align*}
		&\E J_{1,1}\\
		=&\E\sum_{k,l=1}^K\frac{Q^{(2)}_{kl}}{nN_{kl}}[1_{C_{l}}G^21_{C_{k}}1_{C_{l}}G1_{C_{k}}+1_{C_{l}}G^21_{C_{l}}1_{C_{k}}G1_{C_{k}}+1_{C_{l}}G1_{C_{k}}1_{C_{l}}G^21_{C_{k}}\\ 
		&+1_{C_{l}}G1_{C_{l}}1_{C_{k}}G^21_{C_{k}}]\\ 
		&-\E\sum_{k=1}^K\frac{1}{N_{kk}}\sum\limits_{\substack{i\in C_{k}\\ 
				j\in C_k}}\sum\limits_{\substack{\alpha\in C_k}}\frac{\kappa_{\alpha\alpha}^{(2)}}{n}[(G^2)_{j\alpha}G_{\alpha i}+(G^2)_{j\alpha}G_{\alpha i}+G_{j\alpha}(G^2)_{\alpha i}+G_{j\alpha}(G^2)_{\alpha i}]\\ 
		=&\E\sum_{k,l=1}^K\frac{Q^{(2)}_{kl}}{nN_{kl}}[Tr(T_kGT_lG)1_{C_{l}}G1_{C_{k}}+Tr(T_lGT_lG)1_{C_{k}}G1_{C_{k}}+1_{C_{l}}G1_{C_{k}}Tr(T_lGT_kG) \\ &+1_{C_{l}}G1_{C_{l}}Tr(T_kGT_kG)]-\E \sum_{k=1}^K\frac{2}{N_{kk}}\sum_{k=1}^K\frac{Q^{(2)}_{kk}}{n}(1_{C_k}(GT_kG^2)1_{C_k}+1_{C_k}(GT_kG^2)1_{C_k} ) \\ 
		=&O(\frac{1}{n})
	\end{align*}
	Now further consider $\E J_{(1,2)}$.
	\begin{align*}
		&-\E J_{1,2} 
		=\sum_{k,l=1}^K\E\frac{Q^{(3)}_{kl}}{2!N_{kl}n^{3/2}}\\ &\times\sum\limits_{\substack{i\in C_k\\ j\in C_l}}[(GT_kGT_lG^2)_{ji}+Tr(T_lG)(GT_kG^2)_{ji}+Tr(T_kG)(GTG^2)_{ji}+(GT_lGT_kG^2)_{ji}\\ 
		&+(GT_kG^2T_lG)_{ji}+Tr(T_lG^2)(GT_kG)_{ji} +Tr(T_kG^2)(GT_lG)_{ji}+(GT_lG^2T_kG)_{ji} \\ 
		&+(G^2T_kGT_lG)_{ji}+Tr(T_lG)(G^2T_KG)_{ji}+Tr(T_kG)(G^2T_lG)_{ji}+(G^2T_lGT_kG)_{ji}   ]\\  
		&-\E\sum_{k=1}^K\frac{1}{N_{kk}}\sum\limits_{\substack{i\in C_k\\ j\in C_k}}\sum\limits_{\substack{\alpha\in C_k}}\frac{Q^{(2)}_{kk}}{2!n^{3/2}}[G_{j\alpha}G_{\alpha\alpha}(G^2)_{\alpha i}+G_{j\alpha}G_{\alpha\alpha}(G^2)_{\alpha i}+ G_{j\alpha}G_{\alpha\alpha}(G^2)_{\alpha i}\\ 
		& +G_{j\alpha}G_{\alpha\alpha}(G^2)_{\alpha i} 
		+   G_{j\alpha}(G^2)_{\alpha\alpha}G_{\alpha i}+G_{j\alpha}(G^2)_{\alpha\alpha}G_{\alpha i}+ G_{j\alpha}(G^2)_{\alpha\alpha}G_{\alpha i} +G_{j\alpha}(G^2)_{\alpha\alpha}G_{\alpha i}\\ 
		&+(G^2)_{j\alpha}G_{\alpha\alpha}G_{\beta i}+(G^2)_{j\alpha}G_{\alpha\alpha}G_{\alpha i}+ (G^2)_{j\alpha}G_{\alpha\alpha}G_{\alpha i} +(G^2)_{j\alpha}G_{\alpha\alpha}G_{\alpha i}  ]\\ 
		=&O(\frac{1}{n^{3/2}}).
	\end{align*}
	
	Similarly, decompose $\E J_{1,3}$ into $\{\E J_{1,3}^{kl}\}_{k,l=1}^K$
	\begin{align*}
		\E J_{1,3}^{kl}=&\E\frac{1}{N_{kl}}\sum\limits_{\substack{i\in C_k\\ j\in C_l}}\sum\limits_{\substack{\alpha\in C_k\\ \beta\in C_l}}\frac{\kappa_{\alpha\beta}^{(4)}}{3!n^{2}}[e_j'G^2(e_{\alpha}e_{\beta}'+e_{\beta}e_{\alpha'})G(e_{\alpha}e_{\beta}'+e_{\beta}e_{\alpha'})G(e_{\alpha}e_{\beta}'+e_{\beta}e_{\alpha'})Ge_i\\ 
		&+e_j'G(e_{\alpha}e_{\beta}'+e_{\beta}e_{\alpha'})G^2(e_{\alpha}e_{\beta}'+e_{\beta}e_{\alpha'})G(e_{\alpha}e_{\beta}'+e_{\beta}e_{\alpha'})Ge_i\\ 
		&+e_j'G(e_{\alpha}e_{\beta}'+e_{\beta}e_{\alpha'})G(e_{\alpha}e_{\beta}'+e_{\beta}e_{\alpha'})G^2(e_{\alpha}e_{\beta}'+e_{\beta}e_{\alpha'})Ge_i\\ 
		&+e_j'G(e_{\alpha}e_{\beta}'+e_{\beta}e_{\alpha'})G(e_{\alpha}e_{\beta}'+e_{\beta}e_{\alpha'})G(e_{\alpha}e_{\beta}'+e_{\beta}e_{\alpha'})G^2e_i]\\ 
		&=O(\frac{1}{n^2}).
	\end{align*}

	Note that the normalizing constant is of order $\frac{1}{n^4}$, while the summation is over 4 independent indices $i,j,\alpha,\beta$. Further note that each term in the summation will be in the form $G^{s_1}_{jt_1}G^{s_2}_{\bar{t}_1t_2}G^{s_3}_{\bar{t}_2t_3}G^{s_4}_{\bar{t}_3i}$, where the integers $1\le s_1,s_2,s_3,s_4\le 2$, $s_1+s_2+s_3+s_4=5$,  for each pair of $(t_1,\bar{t}_1)$, $(t_2,\bar{t}_2)$ and $(t_3,\bar{t}_3)$, they are either $(\alpha,\beta)$ or $(\beta,\alpha)$. Note that we have odd number of $\alpha$'s and $\beta$'s and the the first of all 8 indices is $j$, with the last one to be $i$, so at least two won't be diagonal terms when all four indices are different, note also that only one of  $G^2$ could appear, which means at least one of $G_{j\alpha}$, $G_{j\beta}$, $G_{\alpha i}$ or $G_{\beta i}$ would appear in any of the  products, which yields a order of $O_{\prec}(\frac{1}{\sqrt{n}})$, thus minor.
	
	To be more precise, we have
	\begin{align*}
		&\E\frac{1}{N_{kl}}\sum\limits_{\substack{i\in C_k\\ j\in C_l}}\sum\limits_{\substack{\alpha\in C_k\\ \beta\in C_l}}\frac{\kappa_{\alpha\beta}^{(4)}}{3!n^{2}}(G^2)_{j\alpha}G_{\beta\beta}G_{\alpha\alpha}G_{\beta i}\\ 
		=&\E\frac{1}{N_{kl}}\sum\limits_{\substack{i\in C_k\\ j\in C_l}}\sum\limits_{\substack{\alpha\in C_k\\ \beta\in C_l}}\frac{\kappa_{\alpha\beta}^{(4)}}{3!n^{2}}[(G^2)_{j\alpha}(G_{\beta\beta}G_{\alpha\alpha}-M_kM_l)G_{\beta i}+(G^2)_{j\alpha}M_kM_lG_{\beta i}]\\ 
		=&\E\frac{1}{N_{kl}}\sum\limits_{\substack{\alpha\in C_k\\ \beta\in C_l}}\frac{\kappa_{\alpha\beta}^{(4)}}{3!n^{2}}\frac{n^{\varepsilon}}{\sqrt{n}}|\sum\limits_{\substack{i\in C_k\\ j\in C_l}}(G^2)_{j\alpha}G_{\beta i}|+O(\frac{1}{n^2})=O(\frac{n^{\varepsilon}}{n^{3/2}}).
	\end{align*}
	
	For higher-order expansion terms of  $\E J_{1,d}^{kl}$, $d\ge 4$, simply notice that the normalizing constant would be of order $O(\frac{1}{n^{9/2}})$, while the summation is over 4 indices with any of the terms to be of $O(1)$ due to the trivial bound $\|G(z)\|\le \frac{1}{\Im(z)}$, thus  minor.

	Then we proceed to $\E Tr[G(z)(\hat{H}-H)G(z)(\hat{H}-H)G(z)]$.
	\begin{align*}
		&\E Tr[G(z)(\hat{H}-H)G(z)(\hat{H}-H)G(z)]=\E Tr[(\hat{H}-H)G(\hat{H}-H)G^2]\\	 
		=&\E \sum_{i,j}\sum_{\substack{\alpha\in C_{\sigma(i)}\\ \beta\in C_{\sigma(j)}}}\frac{-H_{\alpha\beta}}{N_{\sigma(i)\sigma(j)}}(G(\hat{H}-H)G^2)_{ji}\\ 
		=&\E \sum_{k,l}\frac{1}{N_{kl}}\frac{Q^{(2)}_{kl}}{n}
		1_{C_l}[G(E_{B(k,l)}+E_{B(l,k)})G(\hat{H}-H)G^2]1_{C_k} \\ 
		&+\E \sum_{k,l}\frac{1}{N_{kl}}\frac{Q^{(2)}_{kl}}{n}
		1_{C_l}[G(E_{B(k,l)}+E_{B(l,k)})G^2]1_{C_k} \\
		&+ \E \sum_{k,l}\frac{1}{N_{kl}}\frac{Q^{(2)}_{kl}}{n}
		1_{C_l}[G(\hat{H}-H)G^2(E_{B(k,l)}+E_{B(l,k)})G]1_{C_k} \\ 
		&+ \E \sum_{k,l}\frac{1}{N_{kl}}\frac{Q^{(2)}_{kl}}{n}
		1_{C_l}[G(\hat{H}-H)G(E_{B(k,l)}+E_{B(l,k)})G^2]1_{C_k} \\ 
		&+\E \sum_{i,j}\sum_{\substack{\alpha\in C_{\sigma(i)}\\ \beta\in C_{\sigma(j)}}}\frac{-1}{N_{\sigma(i)\sigma(j)}}\sum_{d=2}^{\infty}\frac{\kappa_{\alpha\beta}^{(d+1)}}{d!n^{\frac{d+1}{2}}}\frac{\partial^d}{\partial H_{\alpha\beta}^d}(G(\hat{H}-H)G^2)_{ji}\\ 
		=&o(1),
	\end{align*} where $E_{B(k,l)}$ indicates the block matrix  $1_{C_k}1_{C_l}^{\top}$.
	
	Among the above terms we know that only  
	\begin{align*}
		&\E \sum_{k,l}\frac{1}{N_{kl}}\frac{Q^{(2)}_{kl}}{n}
		1_{C_l}[G(E_{B(k,l)}+E_{B(l,k)})G^2]1_{C_k} \\ 
		=&\E \sum_{k,l}\frac{1}{N_{kl}}\frac{Q^{(2)}_{kl}}{n}
		[Tr(T_lG)\delta_{lk}Tr(T_kGT_lG)+Tr(T_lG)Tr(T_kGT_kG)]\\ 
	\end{align*} is  $O(\frac{1}{n})$, while the others are $O(\frac{1}{n^{3/2}})$.
	
	\subsection{Covariance function $Cov(Tr(\hat{G}(z_1)),Tr(\hat{G}(z_2)))$}\label{sec:covhat}
	
	To calculate this covariance function, first we need to do a decomposition.
	\begin{equation}
		\begin{aligned}
			&\E Cov(Tr(\hat{G}(z_1)),Tr(\hat{G}(z_2))) \\ 
			=& \E (Tr(\hat{G}(z_1))-\E Tr(\hat{G}(z_1)))(Tr(\hat{G}(z_2))-\E Tr(\hat{G}(z_2))) \\ 
			=&\E\{[(Tr(\hat{G}(z_1))-Tr G(z_1))-(\E Tr\hat{G}(z_1)-\E TrG(z_1))+(TrG(z_1)-\E TrG(z_1))]\\ &\times
			[(Tr(\hat{G}(z_2))-Tr G(z_2))-(\E Tr\hat{G}(z_2)-\E TrG(z_2))+(TrG(z_2)-\E TrG(z_2))]\}\\ 
			=&\E[a_1a_2+a_1b_2+a_1c_2+b_1a_2+b_1b_2+b_1c_2+c_1a_2+c_1b_2+c_1c_2],
		\end{aligned}
	\end{equation}
	where $$\begin{aligned}
		&a_i=Tr(\hat{G}(z_i))-Tr(G(z_i)), i=1,2, \\ 
		&b_i=-\E (a_i), i=1,2,\\ 
		&c_i=Tr(G(z_i))-\E Tr(G(z_i)), i=1,2.
	\end{aligned}$$

Instantly, we know that $\E b_1c_2=\E b_2c_1=0$.

	First, we consider $\E a_1a_2$.
	\begin{align*}
		\E [a_1a_2]=&  \E (Tr\hat{G}(z_1)-TrG(z_1))(Tr\hat{G}(z_2)-TrG(z_2))\\ 
		=&\E [Tr(-G(z_1)(\hat{H}-H)G(z_1)+G(z_1)(\hat{H}-H)G(z_1)(\hat{H}-H)G(z_1)\\ 
		&-G(z_1)(\hat{H}-H)G(z_1)(\hat{H}-H)G(z_1)(\hat{H}-H)G(z_1))+o_P(\frac{\log(n)}{n})]\times \\
		&[Tr(-G(z_2)(\hat{H}-H)G(z_2)+G(z_2)(\hat{H}-H)G(z_2)(\hat{H}-H)G(z_2))\\ 
		&-G(z_2)(\hat{H}-H)G(z_2)(\hat{H}-H)G(z_2)(\hat{H}-H)G(z_2))+o_P(\frac{\log(n)}{n})].
	\end{align*}
	The problem would be that there would be too many terms (including the $\frac{1}{\sqrt{n}}$ terms) that need to be calculated provided with trivial bound $\E |TrG(z)(\hat{H}-H)G(z)|^2=O(1)$.  

	Hence, we need a more efficient bound for $\E |TrG(z)(\hat{H}-H)G(z)|^2$,
	\begin{align*}
		&\E Tr(G(z)(\hat{H}-H)G(z))Tr(G(z^*)(\hat{H}-H)G(z^*))\\ 
		=&\E \sum_{i,j}\sum_{\alpha\in C_{\sigma(i)},\beta\in C_{\sigma(j)}}\frac{H_{\alpha\beta}}{N_{\sigma(i)\sigma(j)}}(G(z))^2_{ji}Tr(G(z^*)(\hat{H}-H)G(z^*))\\ 
		=&\E\sum_{k,l=1}^K\sum_{i,\alpha\in C_{k},j,\beta\in C_{l}}\frac{1}{N_{kl}}\sum_{d=2}^{\infty}\frac{Q^{(d+1)}_{kl}}{n^{\frac{d+1}{2}}}\frac{\partial^d}{\partial H_{\alpha\beta}^d}[Tr(G(z^*)(\hat{H}-H)G(z^*))(G^2(z))_{ji}]\\ 
		&+\E\sum_{k,l=1}^K\sum_{i\in C_{k},j\in C_{l}}\frac{1}{N_{kl}}\frac{Q^{(2)}_{kl}}{n}\{Tr[G(z^*)(E_{B(k,l)}+E_{B(l,k)})G(z^*)(\hat{H}-H)G(z^*)](G^2(z))_{ji}\\ 
		&+Tr(G(z^*)(\hat{H}-H)G(z^*)(E_{B(k,l)}+E_{B(l,k)})G(z^*))(G^2(z))_{ji}\\ 
		&+Tr(G(z^*)(E_{B(k,l)}+E_{B(l,k)})G(z^*))(G^2(z))_{ji}\\ 
		&+Tr(G(z^*)(\hat{H}-H)G(z^*))(G^2(z)(E_{B(k,l)}+E_{B(l,k)})G(z)+G(z)(E_{B(k,l)}+E_{B(l,k)})G^2(z))_{ji}\}\\ 
		=&O(\frac{1}{\sqrt{n}})+\E\sum_{k,l=1}^K\sum_{i\in C_{k},j\in C_{l}}\frac{1}{N_{kl}}\frac{Q^{(2)}_{kl}}{n}Tr(G(z^*)(E_{B(k,l)}+E_{B(l,k)})G(z^*))(G^2(z))_{ji}\\
		=&O(\frac{\log(n)}{\sqrt{n}}) .
	\end{align*}
Also, note that
	\begin{align*}
		&\E Tr(G(z)(\hat{H}-H)G(z)(\hat{H}-H)G(z))Tr(G(z^*)(\hat{H}-H)G(z^*)(\hat{H}-H)G(z^*))\\ 
		=&\sum_{k,l=1}^K\sum_{m,\alpha\in C_k,j,\beta\in C_l}\frac{Q^{(2)}_{kl}}{N_{kl}n}\Big\{ (G(E_{\alpha\beta}+E_{\beta\alpha})G(\hat{H}-H)G^2)_{ji}(z)Tr(G^*(\hat{H}-H)G^*(\hat{H}-H)G^*)\\ 
		&+(G(\hat{H}-H)G(E_{\alpha\beta}+E_{\beta\alpha})G^2)_{ji}(z)Tr(G^*(\hat{H}-H)G^*(\hat{H}-H)G^*)\\ 
		&+(G(\hat{H}-H)G^2(E_{\alpha\beta}+E_{\beta\alpha})G)_{ji}(z)Tr(G^*(\hat{H}-H)G^*(\hat{H}-H)G^*)\\ 
		&+\frac{1}{N_{kl}}(G(E_{B(k,l)}+E_{B(l,k)})G^2)_{ji}(z)Tr(G^*(\hat{H}-H)G^*(\hat{H}-H)G^*)\\ 
		&+(G(\hat{H}-H)G^2)_{ji}(z)Tr(G^*(E_{\alpha\beta}+E_{\beta\alpha})G^*(\hat{H}-H)G^*(\hat{H}-H)G^*) \\ 
		&+(G(\hat{H}-H)G^2)_{ji}(z)Tr(G^*(\hat{H}-H)G^*(E_{\alpha\beta}+E_{\beta\alpha})G^*(\hat{H}-H)G^*) \\ 
		&+(G(\hat{H}-H)G^2)_{ji}(z)Tr(G^*(\hat{H}-H)G^*(\hat{H}-H)G^*(E_{\alpha\beta}+E_{\beta\alpha})G^*) \\
		&+\frac{1}{N_{kl}}(G(\hat{H}-H)G^2)_{ji}(z)Tr(G^*(E_{B(k,l)}+E_{B(l,k)})G^*(\hat{H}-H)G^*) \\
		&+\frac{1}{N_{kl}}(G(\hat{H}-H)G^2)_{ji}(z)Tr(G^*(\hat{H}-H)G^*(E_{B(k,l)}+E_{B(l,k)})G^*) \Big\}+O(\frac{1}{\sqrt{n}})\\ 
		=&O(\frac{\log(n)}{\sqrt{n}}).
	\end{align*}	
	Thus, by Cauchy inequality we can show that
	\begin{align*}
		&\E [a_1a_2]\\ 
		=&  \E (Tr\hat{G}(z_1)-TrG(z_1))(Tr\hat{G}(z_2)-TrG(z_2))\\ 
		=&\E[Tr(-G(z_1)(\hat{H}-H)G(z_1)+G(z_1)(\hat{H}-H)G(z_1)(\hat{H}-H)G(z_1)+o_p(\frac{\log(n)}{\sqrt{n}}))\times \\
		&Tr(-G(z_2)(\hat{H}-H)G(z_2)+G(z_2)(\hat{H}-H)G(z_2)(\hat{H}-H)G(z_2))+o_p(\frac{\log(n)}{\sqrt{n}}))]\\ 
		=&O(\frac{\log(n)}{\sqrt{n}}).
	\end{align*} 
	In the meantime, by Section \ref{sec:hatmeanpf}, we know 
	\begin{align*}
		\E [a_1b_2]=&-  \E (Tr\hat{G}(z_1)-TrG(z_1))\E(Tr\hat{G}(z_2)-TrG(z_2))=O(\frac{\log(n)^2}{n}),\\ 
		\E [a_2b_1]=&-  \E (Tr\hat{G}(z_2)-TrG(z_2))\E(Tr\hat{G}(z_1)-TrG(z_1))=O(\frac{\log(n)^2}{n}),\\ 
		\E [b_1b_2]=&  \E (Tr\hat{G}(z_2)-TrG(z_2))\E(Tr\hat{G}(z_1)-TrG(z_1))=O(\frac{\log(n)^2}{n}).
	\end{align*} 
	While $\E c_1c_2$ is also known by Section \ref{sec:covlm},  we only need to consider $\E a_1c_2$.
	\begin{align*}
		|\E a_1c_2|^2&=|\E (Tr(\hat{G}(z_1)-G(z_1)))(Tr(G(z_2))-	\E Tr(G(z_2)))|^2\\ 
		&\le \E |Tr(\hat{G}(z_1)-G(z_1))|^2\E |Tr(G(z_2))-	\E Tr(G(z_2))|^2
	\end{align*}
	Recall the system of equations for  $\{Cov_{lm}\}_{l=1}^K,\forall m\in [K]$ in \ref{sec:covlm}, note that the entries of the coefficient matrices are of order 1, thus 
	$$\E |Tr(G(z_2))-\E Tr(G(z_2))|^2=O(1).$$
	So we have $\E [a_1c_2]=O(\frac{\log(n)}{\sqrt{n}}).$ 	Similarly,
	$\E [a_2c_1]=O(\frac{\log(n)}{\sqrt{n}}).$
	Then we see that only $\E c_1c_2$ will count, which means that we will have exactly the same covariance function as in Section \ref{sec:covlm}. It remains to show the normality.

	\subsection{Proof of normality for the data-driven version}\label{sec:gaussianityhat}

	\begin{proof}
		
		%
		
		The main procedure is exactly the same as that in Section \ref{sec:gaussianitynonhat}. For simplicity we will mainly focus more on the difference, some of the overlapping details will not be stated. Let $\hat{\gamma}(z)=\hat{\gamma}^{(n)}(z):=\Re \langle \hat{G}(z)\rangle$ and $\hat{\theta}(z)=\hat{\theta}^{(n)}(z)=\Im \langle \hat{G}(z)\rangle$,
		$$
		\hat{\Psi}(z, c)=\left\{\begin{array}{ll}{\hat{\gamma}(z)} & {\text { if } c=\gamma} \\ {\hat{\theta}(z)} & {\text { if } c=\theta}\end{array}\right. ,
		$$
		and extend the definition of $\hat{a}(c)$ and $\hat{b}(c)$, s.t.
		$$
		(\hat{a}(c), \hat{b}(c))=\left\{\begin{array}{ll}{(1 / 2,1 / 2)} & {\text { if } c=\hat{\gamma}} \\ {(1 / 2 i, 1 / 2 i)} & {\text { if } c=\hat{\theta}}\end{array}\right. .
		$$
		
		Apparently $\E\{{\hat{\Psi}(z, c)}\}=0$. Then our goal is to prove that $\forall \text{ fixed }q\in \mathbb{Z}_+,\ \{z_{s}\}_{s=1}^q\in \{ \mathbb{C}\backslash B_{\varepsilon_0}(\sigma({H}))\}^q,\ \{c_{s}\}_{s=1}^q \in\{\hat{\gamma}, \hat{\theta}\}^q$,  the joint probability distribution of random variables
		$\hat{\Psi}\left(z_{1}, c_{1}\right), \ldots, \hat{\Psi}\left(z_{q}, c_{q}\right)$ is the $q$-dimensional Gaussian distribution with zero mean and feasible covariance matrix.
		Then we  consider the characteristic function of $\hat{\Psi}\left(z_{1}, c_{1}\right), \ldots, \hat{\Psi}\left(z_{q}, c_{q}\right)$,	
		\begin{align*} \hat{e}_{q}^{(n)}\left(T_{q}, C_{q}, Z_{q}\right)=&\prod_{s=1}^{q} \exp \left\{i \tau_{s}\left[\hat{a}\left(c_{s}\right) Tr\langle \hat{G}(z_{s})\rangle+\hat{b}\left(c_{s}\right) Tr\langle \hat{G}(z_{s}^{*})\rangle\right]\right\}.
		\end{align*}
		where  $T_{q}=\left(\tau_{1}, \ldots, \tau_{q}\right), C_{q}=\left(c_{1}, \ldots, c_{q}\right), Z_{q}=\left(z_{1}, \ldots, z_{q}\right)$.  And we will simply use $\hat{e}_q$ when there is no confusion. 
		
		
		By the resolvent identity, we have
		\begin{align*}
			z\sum_{j=1}^{n} \E\left\{\langle \hat{e}_{q}\rangle \hat{G}_{j j}\right\}=&\sum_{j=1}^{n} \E\left\{\langle \hat{e}_{q}\rangle (G-G(\tilde{H}-H)G+G(\tilde{H}-H)G(\tilde{H}-H)G\right.\\ &\left.-\hat{G}(\tilde{H}-H)G(\tilde{H}-H)G(\hat{H}-H)G)_{jj}\right\}\\ 
			=	&J_{3}^{(1)}+J_{3}^{(2)}+J_{3}^{(3)}+O(\frac{\log(n)}{\sqrt{n}}).
		\end{align*}
		where 
		\begin{align*}
			J_{3}^{(1)}=	&\sum_{j=1}^{n} \E\left\{\langle\hat{e}_q \rangle G_{j j}\right\}=z^{-1} \sum_{j, m=1}^{n} \E\left\{\langle \hat{e}_q\rangle G_{j m} H_{m j}\right\}\\ 
			=&z^{-1}\sum_{j, m=1}^{n}	(\sum_{d=0}^p\frac{\kappa^{(d+1)}_{mj}}{d!}\E[\frac{\partial^d \langle \hat{e}_q\rangle G_{jm}}{\partial H_{mj}^d}]+\varepsilon_{mj})
			=\sum_{a+b=1}^3J^{(1)}_{3,(a,b)}+\varepsilon_{J_3^{(1)}}.
		\end{align*}

		Note that by Cauchy inequality and adopting the same way we deal with $e_q$, we can show that all the terms whose counterparts vanish in the case of $e_q$ will still vanish here. First, we need to approximate the derivatives.
		\begin{align*}
			\frac{\partial Tr\hat{G}}{\partial H_{jm}}&=\frac{\partial Tr(G-G(\hat{H}-H)G+G(\hat{H}-H)G(\hat{H}-H)G) +\frac{\log(n)}{\sqrt{n}}}{\partial H_{jm}} .
		\end{align*}
		\begin{align*}
			\frac{\partial \langle \hat{e}_q \rangle}{\partial H_{jm}}&=\E\langle e_q\rangle \frac{\partial \sum_{s=1}^{q} \left\{i \tau_{s}\left[\hat{a}\left(c_{s}\right) Tr \langle \hat{G}(z_{s})\rangle+\hat{b}\left(c_{s}\right) Tr \langle \hat{G}(z_{s}^{*})\rangle\right]\right\}}{\partial H_{jm}} .
		\end{align*}
		One should note that truncating the infinite expansions  to get approximation of the derivatives like this is always dangerous. However, note that the form of the higher-order expansion terms are always clear in the sense that $(\hat{H}-H)$ will contribute  one more $\frac{\log(n)}{\sqrt{n}}$. Also in our setting \eqref{defi:nonhatversion}, $\forall i,j\in[n]$, $H_{ij}$ is the averaging of centered Bernoulli random variable, thus always bounded. So we may use a finite expansion here.  We can see that 		
		$$		J^{(1)}_{3,(0,1)}=-\frac{1}{nz}\E \sum_{j,m}\kappa^{(2)}_{jm}G_{jm}\hat{e}_q\left[ \sum_{s=1}^qi\tau_s(2\hat{a}_s\frac{\partial Tr\hat{G}(z_s)}{\partial H_{jm}}+2\hat{b}_s\frac{\partial Tr\hat{G}(z_s^*)}{\partial H_{jm}}) \right]. $$
		
		Comparing with $I_{3,(0,1)}$, it's not hard to see that as long as we can prove 
		\begin{align*}
			\frac{1}{n}\E\sum_{j,m}\kappa^{(2)}_{jm}G_{jm}\frac{\partial Tr G(z_s)(\hat{H}-H)G(z_s)}{\partial H_{jm}}=o(1),
		\end{align*}	and	
		\begin{align*}
			\frac{1}{n}\E\sum_{j,m}\kappa^{(2)}_{jm}G_{jm}\frac{\partial Tr G(z_s)(\hat{H}-H)G(z_s)(\hat{H}-H)G(z_s)}{\partial H_{jm}}=o(1),
		\end{align*}	
		 the non-vanishing contribution of the terms to the covariance terms would be the same as in Section \ref{sec:gaussianitynonhat}.

		Easy to see that
		\begin{align*}
			\sum_{m,j}	G_{mj}\frac{\kappa^{(2)}_{mj}}{n}Tr(G(z)(E_{mj}+E_{jm})G(z)(\hat{H}-H)G(z))=O(\frac{\log(n)}{\sqrt{n}})
		\end{align*}
	is minor.	So are the other components generated by the reminder terms of the derivatives.
		
		Similar things happen when we consider the analog of  $I_{3,(1,2)}$
		\begin{align*}
			&J^{(1)}_{3,(1,2)}=-\frac{1}{n^2z}\sum_{j,m}\frac{\kappa^{(4)}_{jm}}{3!}3(G^2_{jm}+G_{jj}G_{mm})\frac{\partial^2\langle \hat{e}_q\rangle }{\partial H_{jm}^2},
		\end{align*}  
		the repetitive $O(\frac{\log(n)}{\sqrt{n}})$ factors introduced by $(\hat{H}-H)$ make the terms generated by the difference between $\hat{G}$ and $G$ minor.

		Then it remains to show that $J_3^{(2)}$ and $J_3^{(3)}$ are minor.
		\begin{align*}
			&J_{3}^{(2)}:=-\sum_{j=1}^n\E\{\langle \hat{e}_q\rangle(G(\hat{H}-H)G)_{jj}  \} 
			=\sum_{j,m=1}^n\E \langle \hat{e}_q\rangle \sum_{\alpha\in C_{\sigma(m)},\beta\in C_{\sigma(j)}}\frac{H_{\alpha\beta}}{N_{\sigma(m)\sigma(j)}}(G^2)_{mj}\\ 
			=&\sum_{k,l=1}^K\sum_{m,\alpha\in C_k,j,\beta\in C_l}\E\frac{Q^{(2)}_{kl}}{N_{kl}n}\langle \hat{e}_q\rangle(G_{m\alpha}(G^2)_{\beta j}+G_{m\beta}(G^2)_{\alpha j}+(G^2)_{m\alpha}(G_{\beta j})+(G^2)_{m\alpha}G_{\beta j})\\ 
			&+O(\frac{\log(n)}{n})\\
			=&O(\frac{\log(n)}{n}).
		\end{align*}
		\begin{align*}
			&J_{3}^{(3)}:=\sum_{j=1}^n\E[\langle \hat{e}_q\rangle(G(\hat{H}-H)G(\hat{H}-H)G)_{jj}]=\sum_{j,m=1}^n\E \langle \hat{e}_q\rangle(\hat{H}-H)_{jm}(G(\hat{H}-H)G^2)_{mj}\\ 
			=&-\sum_{j,m=1}^n\E \langle \hat{e}_q\rangle \sum_{\alpha\in C_{\sigma(m)},\beta\in C_{\sigma(j)}}\frac{H_{\alpha\beta}}{N_{\sigma(m)\sigma(j)}}(G(\hat{H}-H)G^2)_{mj}\\ 
			=&\sum_{k,l=1}^K\sum_{m,\alpha\in C_k,j,\beta\in C_l}\E\frac{Q^{(2)}_{kl}}{N_{kl}n}\langle \hat{e}_q\rangle(G_{m\alpha}(G(\hat{H}-H)G^2)_{\beta j}+G_{m\beta}(G(\hat{H}-H)G^2)_{\alpha j}\\ 
			&+(G(\hat{H}-H)G)_{m\alpha}(G^2)_{\beta j}+(G(\hat{H}-H)G)_{m\beta}(G^2)_{\alpha j}\\ 
			&+(G(\hat{H}-H)G^2)_{m\alpha}G_{\beta j}+(G(\hat{H}-H)G^2)_{m\beta}G_{\alpha j}+\frac{1}{N_{kl}}(G(E_{B(k,l)}+E_{B(l,k)})G^2)_{mj}\\ 
			&+O(\frac{\log(n)}{n})\\
			=&O(\frac{\log(n)}{n}).
		\end{align*}
		Thus, we may also conclude that the covariance function would be the same as that of Theorem \ref{thm:mainBlockWigner} and the normality follows.

		\subsection{Tightness of the process $\langle Tr \hat{G}(z)\rangle$}\label{sec:tightness2}
		
		Similarly, after we establish the finite dimensional convergence, it left to show that the process $Tr\langle \hat{G}(z)\rangle, z\in \mathbb{C}\backslash B_{\varepsilon_0}(\sigma(\hat{H}))$ is tight. We will show that 
		\begin{equation}
			\E |Tr\langle \hat{G}(z_1)\rangle-Tr \langle \hat{G}(z_2)\rangle|^2=O(|z_1-z_2|^2).
		\end{equation}
		
		Again note that \begin{equation*}
			\begin{aligned}
				&\E |Tr\langle \hat{G}(z_1)\rangle-Tr \langle \hat{G}(z_2)\rangle|^2\\ 
				=&\E |Tr\langle \hat{G}(z_1)\hat{G}(z_2)\rangle|^2|z_1-z_2|^2.
			\end{aligned}
		\end{equation*}
		and we can break down the question to boundedness of $\E |Tr\langle T_l\hat{G}(z_1)T_m\hat{G}(z_2)\rangle|^2$ and adopt a similar approach to Section \ref{sec:boundtgtgvar}. The details are omitted here.

	\end{proof}		

%
%
%

	%
	%

\begin{acks}[Acknowledgments]
	The authors would like to thank Prof. Zhigang Bao at HKUST for his insightful suggestions and comments.
\end{acks}

\bibliographystyle{imsart-number} 

%

\bibliography{library.bib}

\begin{thebibliography}{30}

\bibitem{Adhikari2019a}
\begin{barticle}[author]
\bauthor{\bsnm{Adhikari},~\bfnm{Kartick}\binits{K.}},
  \bauthor{\bsnm{Jana},~\bfnm{Indrajit}\binits{I.}} \AND
  \bauthor{\bsnm{Saha},~\bfnm{Koushik}\binits{K.}}
(\byear{2021}).
\btitle{Linear eigenvalue statistics of random matrices with a variance
  profile}.
\bjournal{Random Matrices: Theory and Applications}
\bvolume{10}
\bpages{2250004}.
\bdoi{10.1142/S2010326322500046}
\end{barticle}
\endbibitem

\bibitem{Airoldi2013}
\begin{barticle}[author]
\bauthor{\bsnm{Airoldi},~\bfnm{Edoardo}\binits{E.}},
  \bauthor{\bsnm{Costa},~\bfnm{Thiago}\binits{T.}} \AND
  \bauthor{\bsnm{Chan},~\bfnm{Stanley}\binits{S.}}
(\byear{2013}).
\btitle{Stochastic blockmodel approximation of a graphon: Theory and consistent
  estimation}.
\bjournal{Advances in Neural Information Processing Systems}.
\end{barticle}
\endbibitem

\bibitem{Ajanki}
\begin{barticle}[author]
\bauthor{\bsnm{Ajanki},~\bfnm{Oskari}\binits{O.}},
  \bauthor{\bsnm{Erd\H{o}s},~\bfnm{L\'aszl\'o}\binits{L.}} \AND
  \bauthor{\bsnm{Kr\"uger},~\bfnm{Torben}\binits{T.}}
(\byear{2015}).
\btitle{Quadratic Vector Equations On Complex Upper Half-Plane}.
\bjournal{Memoirs of the American Mathematical Society}
\bvolume{261}.
\bdoi{10.1090/memo/1261}
\end{barticle}
\endbibitem

\bibitem{Ajanki2017a}
\begin{barticle}[author]
\bauthor{\bsnm{Ajanki},~\bfnm{Oskari~H.}\binits{O.~H.}},
  \bauthor{\bsnm{Erd\H{o}s},~\bfnm{L\'aszl\'o}\binits{L.}} \AND
  \bauthor{\bsnm{Kr\"uger},~\bfnm{Torben}\binits{T.}}
(\byear{2017}).
\btitle{{Universality for general Wigner-type matrices}}.
\bjournal{Probability Theory and Related Fields}
\bvolume{169}
\bpages{667--727}.
\bdoi{10.1007/s00440-016-0740-2}
\end{barticle}
\endbibitem

\bibitem{Anderson2006}
\begin{barticle}[author]
\bauthor{\bsnm{Anderson},~\bfnm{Greg~W}\binits{G.~W.}} \AND
  \bauthor{\bsnm{Zeitouni},~\bfnm{Ofer}\binits{O.}}
(\byear{2006}).
\btitle{{A CLT for a band matrix model}}.
\bjournal{Probability Theory and Related Fields}
\bvolume{134}
\bpages{283--338}.
\bdoi{10.1007/s00440-004-0422-3}
\end{barticle}
\endbibitem

\bibitem{Bai2009}
\begin{bbook}[author]
\bauthor{\bsnm{Bai},~\bfnm{Z.}\binits{Z.}} \AND
  \bauthor{\bsnm{Silverstein},~\bfnm{Jack}\binits{J.}}
(\byear{2010}).
\btitle{{Spectral Analysis of Large Dimensional Random Matrices}}.
\bpublisher{Springer}.
\bdoi{10.1007/978-1-4419-0661-8}
\end{bbook}
\endbibitem

\bibitem{bai2005}
\begin{barticle}[author]
\bauthor{\bsnm{Bai},~\bfnm{Z.~D.}\binits{Z.~D.}} \AND
  \bauthor{\bsnm{Yao},~\bfnm{J.}\binits{J.}}
(\byear{2005}).
\btitle{On the convergence of the spectral empirical process of Wigner
  matrices}.
\bjournal{Bernoulli}
\bvolume{11}
\bpages{1059--1092}.
\bdoi{10.3150/bj/1137421640}
\end{barticle}
\endbibitem

\bibitem{Bai1999}
\begin{barticle}[author]
\bauthor{\bsnm{Bai},~\bfnm{Z~D}\binits{Z.~D.}}
(\byear{1999}).
\btitle{{Methodologies in spectral analysis of large dimensional random
  matrices, a review}}.
\bjournal{Statistica Sinica}
\bvolume{9}
\bpages{611--677}.
\end{barticle}
\endbibitem

\bibitem{Bai2004}
\begin{barticle}[author]
\bauthor{\bsnm{Bai},~\bfnm{Z~D}\binits{Z.~D.}} \AND
  \bauthor{\bsnm{Silverstein},~\bfnm{Jack~W}\binits{J.~W.}}
(\byear{2004}).
\btitle{{CLT for linear spectral statistics of large-dimensional sample
  covariance matrices}}.
\bjournal{Annals of Probability}
\bvolume{32}
\bpages{553--605}.
\end{barticle}
\endbibitem

\bibitem{Banerjee2017}
\begin{barticle}[author]
\bauthor{\bsnm{Banerjee},~\bfnm{Debapratim}\binits{D.}} \AND
  \bauthor{\bsnm{Ma},~\bfnm{Zongming}\binits{Z.}}
(\byear{2017}).
\btitle{{Optimal hypothesis testing for stochastic block models with growing
  degrees}}.
\bjournal{arXiv: 1705.05305}
\bpages{1--77}.
\end{barticle}
\endbibitem

\bibitem{bao2021quantitative}
\begin{barticle}[author]
\bauthor{\bsnm{Bao},~\bfnm{Zhigang}\binits{Z.}} \AND
  \bauthor{\bsnm{He},~\bfnm{Yukun}\binits{Y.}}
(\byear{2021}).
\btitle{Quantitative CLT for linear eigenvalue statistics of Wigner matrices}.
\bjournal{arXiv:2103.05402}.
\end{barticle}
\endbibitem

\bibitem{Male2013}
\begin{barticle}[author]
\bauthor{\bsnm{Benaych-Georges},~\bfnm{Florent}\binits{F.}},
  \bauthor{\bsnm{Guionnet},~\bfnm{Alice}\binits{A.}} \AND
  \bauthor{\bsnm{Male},~\bfnm{Camille}\binits{C.}}
(\byear{2014}).
\btitle{{Central limit theorems for linear statistics of heavy tailed random
  matrices}}.
\bjournal{{Communications in Mathematical Physics}}
\bvolume{239}
\bpages{641-686}.
\bdoi{10.1007/s00220-014-1975-3}
\end{barticle}
\endbibitem

\bibitem{Benaych-Georges2016}
\begin{barticle}[author]
\bauthor{\bsnm{Benaych-Georges},~\bfnm{Florent}\binits{F.}} \AND
  \bauthor{\bsnm{Knowles},~\bfnm{Antti}\binits{A.}}
(\byear{2018}).
\btitle{{Lectures on the local semicircle law for Wigner matrices}}.
\bjournal{Panoramas et Syntheses, Soci\'et\'e Math\'ematique de France}
\bvolume{53}
\bpages{1--90}.
\end{barticle}
\endbibitem

\bibitem{Chatterjee2006}
\begin{barticle}[author]
\bauthor{\bsnm{Chatterjee},~\bfnm{Sourav}\binits{S.}}
(\byear{2008}).
\btitle{A New Method of Normal Approximation}.
\bjournal{The Annals of Probability}
\bvolume{36}
\bpages{1584--1610}.
\end{barticle}
\endbibitem

\bibitem{Chatterjee2009}
\begin{barticle}[author]
\bauthor{\bsnm{Chatterjee},~\bfnm{Sourav}\binits{S.}}
(\byear{2009}).
\btitle{Fluctuations of eigenvalues and second order Poincar\'e inequalities}.
\bjournal{Probability Theory and Related Fields}
\bvolume{143}
\bpages{1-40}.
\bdoi{10.1007/s00440-007-0118-6}
\end{barticle}
\endbibitem

\bibitem{Cipolloni2020}
\begin{barticle}[author]
\bauthor{\bsnm{Cipolloni},~\bfnm{Giorgio}\binits{G.}},
  \bauthor{\bsnm{Erd\H{o}s},~\bfnm{L\'{a}szl\'{o}}\binits{L.}} \AND
  \bauthor{\bsnm{Schr\"{o}der},~\bfnm{Dominik}\binits{D.}}
(\byear{2020}).
\btitle{{Functional Central Limit Theorems for Wigner Matrices}}.
\bjournal{arXiv:2012.13218}.
\end{barticle}
\endbibitem

\bibitem{Costin1995}
\begin{barticle}[author]
\bauthor{\bsnm{Costin},~\bfnm{Ovidiu}\binits{O.}} \AND
  \bauthor{\bsnm{Lebowitz},~\bfnm{Joel~L}\binits{J.~L.}}
(\byear{1995}).
\btitle{{Gaussian fluctuation in random matrices}}.
\bjournal{Physical Review Letters}
\bvolume{75}
\bpages{69--72}.
\bdoi{10.1103/PhysRevLett.75.69}
\end{barticle}
\endbibitem

\bibitem{Erdoes2011}
\begin{barticle}[author]
\bauthor{\bsnm{Erd{\H{o}}s},~\bfnm{L\'aszl\'o}\binits{L.}}
(\byear{2011}).
\btitle{{Universality of Wigner random matrices: a survey of recent results}}.
\bjournal{Russian Mathematical Surveys}
\bvolume{66}
\bpages{507--626}.
\bdoi{10.1070/rm2011v066n03abeh004749}
\end{barticle}
\endbibitem

\bibitem{Erds2019MDE}
\begin{barticle}[author]
\bauthor{\bsnm{Erd\H{o}s},~\bfnm{L.}\binits{L.}}
(\byear{2019}).
\btitle{The matrix Dyson equation and its applications for random matrices}.
\bjournal{arXiv: 1903:10060}.
\end{barticle}
\endbibitem

\bibitem{Guionnet2002a}
\begin{barticle}[author]
\bauthor{\bsnm{Guionnet},~\bfnm{Alice}\binits{A.}}
(\byear{2002}).
\btitle{{Large deviations upper bounds and central limit theorems for
  non-commutative functionals of Gaussian large random matrices}}.
\bjournal{Annales de l'institut Henri Poincare (B) Probability and Statistics}
\bvolume{38}
\bpages{341--384}.
\bdoi{10.1016/S0246-0203(01)01093-7}
\end{barticle}
\endbibitem

\bibitem{He2017a}
\begin{barticle}[author]
\bauthor{\bsnm{He},~\bfnm{Yukun}\binits{Y.}} \AND
  \bauthor{\bsnm{Knowles},~\bfnm{Antti}\binits{A.}}
(\byear{2017}).
\btitle{{Mesoscopic eigenvalue statistics of wigner matrices}}.
\bjournal{Annals of Applied Probability}
\bvolume{27}
\bpages{1510--1550}.
\bdoi{10.1214/16-AAP1237}
\end{barticle}
\endbibitem

\bibitem{Khorunzhy1996}
\begin{barticle}[author]
\bauthor{\bsnm{Khorunzhy},~\bfnm{Alexei~M.}\binits{A.~M.}},
  \bauthor{\bsnm{Khoruzhenko},~\bfnm{Boris~A.}\binits{B.~A.}} \AND
  \bauthor{\bsnm{Pastur},~\bfnm{Leonid~A.}\binits{L.~A.}}
(\byear{1996}).
\btitle{{Asymptotic properties of large random matrices with independent
  entries}}.
\bjournal{Journal of Mathematical Physics}
\bvolume{37}
\bpages{5033--5060}.
\bdoi{10.1063/1.531589}
\end{barticle}
\endbibitem

\bibitem{landon2019applications}
\begin{barticle}[author]
\bauthor{\bsnm{Landon},~\bfnm{Benjamin}\binits{B.}} \AND
  \bauthor{\bsnm{Sosoe},~\bfnm{Philippe}\binits{P.}}
(\byear{2018}).
\btitle{Applications of mesoscopic CLTs in random matrix theory}.
\bjournal{arXiv: 1811.05915}
\end{barticle}
\endbibitem

\bibitem{Kevin2018}
\begin{barticle}[author]
\bauthor{\bsnm{Lee},~\bfnm{Ji}\binits{J.}} \AND
  \bauthor{\bsnm{Schnelli},~\bfnm{Kevin}\binits{K.}}
(\byear{2018}).
\btitle{Local law and Tracy-Widom limit for sparse random matrices}.
\bjournal{Probability Theory and Related Fields}
\bvolume{171}.
\bdoi{10.1007/s00440-017-0787-8}
\end{barticle}
\endbibitem

\bibitem{Lei2016}
\begin{barticle}[author]
\bauthor{\bsnm{Lei},~\bfnm{Jing}\binits{J.}}
(\byear{2016}).
\btitle{{A goodness-of-fit test for stochastic block models}}.
\bjournal{Annals of Statistics}
\bvolume{44}
\bpages{401--424}.
\bdoi{10.1214/15-AOS1370}
\end{barticle}
\endbibitem

\bibitem{Lytova2009a}
\begin{barticle}[author]
\bauthor{\bsnm{Lytova},~\bfnm{A.}\binits{A.}} \AND
  \bauthor{\bsnm{Pastur},~\bfnm{L.}\binits{L.}}
(\byear{2009}).
\btitle{{Central limit theorem for linear eigenvalue statistics of random
  matrices with independent entries}}.
\bjournal{Annals of Probability}
\bvolume{37}
\bpages{1778--1840}.
\bdoi{10.1214/09-AOP452}
\end{barticle}
\endbibitem

\bibitem{Mingo2006}
\begin{barticle}[author]
\bauthor{\bsnm{Mingo},~\bfnm{James~A.}\binits{J.~A.}} \AND
  \bauthor{\bsnm{Speicher},~\bfnm{Roland}\binits{R.}}
(\byear{2006}).
\btitle{Second order freeness and fluctuations of random matrices: I. Gaussian
  and Wishart matrices and cyclic Fock spaces}.
\bjournal{Journal of Functional Analysis}
\bvolume{235}
\bpages{226-270}.
\bdoi{https://doi.org/10.1016/j.jfa.2005.10.007}
\end{barticle}
\endbibitem

\bibitem{Sinai}
\begin{barticle}[author]
\bauthor{\bsnm{Sinai},~\bfnm{Ya.}\binits{Y.}} \AND
  \bauthor{\bsnm{Soshnikov},~\bfnm{A.}\binits{A.}}
(\byear{1998}).
\btitle{Central limit theorem for traces of large random symmetric matrices
  with independent matrix elements}.
\bjournal{Boletim da Sociedade Brasileira de Matem{\'a}tica -
  Bulletin/Brazilian Mathematical Society}
\bvolume{29}
\bpages{1-24}.
\bdoi{10.1007/BF01245866}
\end{barticle}
\endbibitem

\bibitem{Tikhomirov1981}
\begin{barticle}[author]
\bauthor{\bsnm{Tikhomirov},~\bfnm{A.~N.}\binits{A.~N.}}
(\byear{1980}).
\btitle{On the convergence rate in the central limit theorem for weakly
  dependent random variables}.
\bjournal{Theory of Probability and Its Applications}
\bvolume{XXV}
\bpages{790--809}.
\bdoi{10.1002/0471667196.ess2714.pub2}
\end{barticle}
\endbibitem

\bibitem{Yizhe2019}
\begin{barticle}[author]
\bauthor{\bsnm{Zhu},~\bfnm{Yizhe}\binits{Y.}}
(\byear{2020}).
\btitle{A graphon approach to limiting spectral distributions of Wigner-type
  matrices}.
\bjournal{Random Structures \& Algorithms}
\bvolume{56}
\bpages{251-279}.
\bdoi{https://doi.org/10.1002/rsa.20894}
\end{barticle}
\endbibitem

\end{thebibliography}

\end{document}